\documentclass[12pt,twoside]{amsart}
\usepackage{amsmath}
\usepackage{amssymb}
\usepackage{graphicx,epsf,amsmath}  
\usepackage{epsf,graphicx}
\setlength{\unitlength}{1cm}
\usepackage{comment}

\textheight 21.0cm
\textwidth 15.5cm
\topmargin  0pt
\oddsidemargin=0cm
\evensidemargin=0cm

%

%
%
%

%
%
%
\newtheorem{thm}{Theorem}[section]
\newtheorem{theorem}{Theorem}[section]
\newtheorem{pro}[thm]{Proposition}
\newtheorem{conj}[thm]{Conjecture}
\newtheorem{lem}[thm]{Lemma}
\newtheorem{lemma}[thm]{Lemma}

\newtheorem{corollary}[thm]{Corollary}

\newtheorem{defi}[thm]{Definition}

\newtheorem{preremark}[theorem]{Remark}
\newenvironment{remark}{\begin{preremark}\rm}{\end{preremark}}
\thispagestyle{empty}
\def\NN{{\mathbb N}}

\def\ZZ{{\mathbb Z}}
\def\integer{{\mathbb Z}}
\def\RR{{\mathbb R}}
\def\real{{\mathbb R}}
\def\TT{{\mathbb T}}
\def\torus{{\mathbb T}}
\def\CC{{\mathbb C}}
\def\complex{{\mathbb C}}
\def\NN{{\mathbb N}}

\def\ZZ{{\mathbb Z}}
\def\integer{{\mathbb Z}}
\def\RR{{\mathbb R}}
\def\real{{\mathbb R}}
\def\TT{{\mathbb T}}
\def\torus{{\mathbb T}}

\def\CC{{\mathbb C}}
\def\complex{{\mathbb C}}
\def\M{{\mathcal M}}
\def\Id{{\rm Id}}
\def\ep{\varepsilon}
\def\A{{\mathcal A}}
\def\D{{\mathcal D}}
\def\H{{\mathcal H}}
\def\W{{\mathcal W}}
\def\th{\theta} 

\def\NN{{\mathbb N}}

\def\ZZ{{\mathbb Z}}
\def\integer{{\mathbb Z}}
\def\RR{{\mathbb R}}
\def\real{{\mathbb R}}
\def\TT{{\mathbb T}}
\def\torus{{\mathbb T}}

\def\CC{{\mathbb C}}
\def\complex{{\mathbb C}}

\def\L{{\mathcal L}}
\def\M{{\mathcal M}}
\def\N{{\mathcal N}}
\def\S{{\mathcal S}}
\def\U{{\mathcal U}}
\def\Id{{\rm Id}}

\def\avg{{\rm avg\,}}

\def\ep{\varepsilon}
\def\A{{\mathcal A}}
\def\X{{\mathcal X}}
\def\th{\theta}

\def\dist{{\rm dist}}

\def\Tau{{\mathcal T}}

\title[Whiskered tori in PDEs]{An {\sl a posteriori} KAM theorem for
  whiskered tori in Hamiltonian partial differential equations with applications to some ill-posed  equations}
\author[R. de la Llave]{Rafael de la Llave}
\address{School of Mathematics, Georgia Institute of 
Technology, Atlanta GA 30332}
\email{rafael.delallave@math.gatech.edu}
\thanks{R.L. Supported in part by National Science Foundation DMS-1500943}
\author[Y. Sire]{Yannick Sire} 
\address{Johns Hopkins University, Krieger Hall, Baltimore, USA}
\email{sire@math.jhu.edu}

\begin{document}
\begin{abstract}
The goal of this paper
is to develop a KAM theory for tori with hyperbolic directions, 
which applies 
to Hamiltonian partial differential equations, even to 
some  ill-posed ones. 

The main result has an \emph{a-posteriori} format, 
i.e., we show that if there is an approximate solution of 
an invariance equation which also satisfies some 
non-degeneracy conditions, then there is a true solution nearby. 
This allows, besides dealing with 
the quasi-integrable case, 
 to validate numerical computations or formal perturbative
expansions as well as to obtain quasi-periodic  solutions in  
degenerate situations. The a-posteriori format 
also has other automatic consequences
(smooth dependence on parameters, bootstrap of
regularity, etc.).  We emphasize that the non-degeneracy conditions 
required are just quantities evaluated on the approximate solution
(no global assumptions on the system such as twist). Hence, 
they are readily verifiable in perturbation expansions. 

The method of proof is based on an iterative method to solve a
functional equation for the parameterization of the torus satisfying 
the invariance equations  and for parametrization of 
directions 
invariant under the linearizatation. The iterative method does
 not use transformation
theory or action-angle variables.
It does  not assume that the system is close to integrable. We do not even need
that the equation under consideration admits solutions for every
initial data. In this paper we present in detail 
the case of analytic tori when the equations are analytic 
in a very weak sense. 

We first develop an abstract theorem. Then, we show 
how this abstract result applies to some concrete 
examples, including the scalar Boussinesq equation 
and  the Boussinesq system so that 
we  construct {\sl small amplitude} tori for the
equations, which are even in the spatial variable.  Note that the
 equations we use as examples are
ill-posed.  The strategy for 
the abstract theorem is inspired by that 
 in \cite{fontichdelLS07a,fontichdelLS071}. The main part of the paper
is to study infinite dimensional analogues of dichotomies 
which applies even  to ill-posed equations and which is 
stable under addition of unbounded perturbations. This requires
that we assume smoothing properties.  We also present 
very detailed bounds on the change of the splittings  under 
perturbations.

\end{abstract}
\maketitle
\tableofcontents
\section{Introduction}
The goal of this paper
is to develop a KAM theory for tori with hyperbolic directions, 
which applies 
to Hamiltonian partial differential equations, even to 
some  ill-posed ones. The main result, Theorem~\ref{existence} is 
stated in an a-posteriori format, that is, we formulate 
invariance equations and show that approximate solutions 
that satisfy some explicit non-degeneracy conditions, 
lead to a true solution. This a-posteriori format 
leads automatically to several consequences (see Section~\ref{sec:consequences}) 
and can be used to justify numerical solutions 
and asymptotic expansions. We note that the results do not 
assume that the equations we consider define evolutions and indeed
we present examples of quasi-periodic solutions in 
some well known ill-posed equations. See Sections~\ref{applications}, 
\ref{applications2}.

\subsection{Some general considerations and relations 
with the literature} 
Some partial differential equations appear as models 
of evolution in time for Physical systems. 
It is 
natural to consider such evolutionary PDE's as a dynamical 
system and  try to use the methods of dynamical 
systems. 

Adapting dynamical systems techniques to evolutionary 
PDE's has to overcome several technical difficulties.
For starters, since the PDE's involve 
unbounded operators, the standard theory of existence, uniqueness developed 
for ordinary differential equations does not apply. As it is well 
known, by now, 
there are systematic ways of defining the evolution
using e.g. semigroup theory \cite{Showalter, pazy,Goldstein85} and 
many dynamical systems techniques can be adapted in 
the generality of semigroups ( see the pioneering work of  \cite{henry}
and more modern treatises
 \cite{Hale88, Miyadera92,Temam97,ConstantinFNT89, Robinson01, SellY02,
ChepyzhovV02, HaleMO02,CherrierM12}.)
Besides the analytic difficulties, adapting ODE techniques 
to PDE's has to  face that several geometric 
arguments fail to hold. 
 For instance, symplectic structures on
infinite-dimensional spaces (see for instance
\cite{chernoffM74,bambusi99}) could  lack several important properties.
Hence, the techniques (e.g. KAM theory) that are based on geometric 
properties have to overcome several difficulties
 specially the methods based on transformation theory
\cite{Kuksin93, Kuksin94, Kuksin00,  Kuksin06,KappelerP03}. 
Some recent methods based on avoiding transformation theory 
are \cite{CraigW1,CraigW2,Bourgain00,Berti07,Craig00}. When working near an equilibrium 
point, one also has to face the difficulty that the action angle variables 
are singular (even in finite dimensions) 
\cite{KappelerP03, GrebertK14}. In the approach of this paper, we
do not use action angle variables, which present difficulties even in 
finite dimensional fixed points and, much more in PDE's. 

One class of evolutionary equations that has not received much
systematic attention is \emph{ill-posed} equations. In ill-posed
equations, one cannot define the evolution for all the initial data in
a certain space (an equation may be ill-posed in a space and well
posed in another) or the evolution is not continuous in this space.
Nevertheless, it can be argued that even if one cannot find solutions
for all the inital data, one can still find interesting solutions
which provide accurate descriptions of physical phenomena.  Many
ill-posed equations in the literature are obtained as a heuristic
approximation of a more fundamental equation.  The solutions of the
ill-posed equation may be approximate solutions of the true equation.

For example, many \emph{long wave} approximations of water waves 
turn out to be ill-posed (e.g. the Boussinesq equations used as
examples here, see Section~\ref{applications}) but several 
special solutions (e.g. traveling waves or the quasi-periodic solutions 
considered in this paper) of the long wave approximations  can be constructed. 
These special solutions are such that, for them, 
the long wave approximation is
rather accurate. Hence, 
the solutions obtained  here for  the long wave 
approximation provide approximate solutions of the original  water wave 
equation and are physically relevant.

Note that the long-wave approximations are PDE's while 
the water waves problem is a free boundary and many techniques are
different, notably in numerical analysis. Being able to validate the 
numerical solutions is useful. 

Of course, the straightforward adaptation  of
ODE methods for invariant manifolds
to ill-posed equations present some challenges because 
some methods (e.g. graph transform, index theory methods, etc.), which are very useful in ODEs,  require 
taking  arbitrary initial conditions.  Nevertheless, we will present 
rather satisfactory adaptations of some of the methods of hyperbolic 
dynamical systems.

In the present paper, we are concerned with the construction of quasi-periodic
motions of PDEs. The method is very general.
Some concrete examples of 
ill-posed equations to which the method applies 
will be presented in Sections~\ref{applications} and \ref{applications2}. 

The tori we consider are \emph{whiskered}, that is the linearization has 
many hyperbolic directions, indeed, as many directions as it is 
possible to be compatible with the preservation of the symplectic 
structure. There is a rich KAM theory for whiskered tori 
\cite{Graff74,Zehnder75b} or for lower dimensional tori will 
elliptic directions \cite{Eliasson88,You99, LiY05,Sevryuk06}. 
A treatment of normally elliptic tori by methods similar 
to those here is in \cite{LuqueV11}. 

In PDE's, where the phase space is infinite dimensional, 
the quasiperiodic solutions are very low dimensional. Nevertheless, 
most of the literature in PDE is concerned with normally elliptic tori,
so that most of the  small divisors come from the elliptic 
 normal directions. The models
considered here have no elliptic normal directions. 
 On the other hand, the models
we consider do not admit solutions for all initial conditions and present
very severe unstable terms.  Hence, methods based on transformation theory, 
normal forms etc. are very difficult in our case.  We also deal 
with unbounded perturbations.

\subsection{Overview of the method}
We are going to follow roughly the method
described in \cite{fontichdelLS07a} 
and implemented in \cite{fontichdelLS071} for finite dimensional 
systems,  in \cite{LiL09,FontichLS15} for infinite dimensional 
systems (but whose evolution is 
a smooth differential equation; the main 
difficuly overcome in \cite{LiL09} was 
the fact that the equations involve delays, 
a new difficulty in \cite{FontichLS15} is the spatial 
structure). In this paper we overcome the difficulty 
that the evolution equations are PDE's which are perturbed by unbounded 
operators. Hence, we  have to overcome many  problems
(unbounded operators, regularity issues and spectral theory for instance ). Some results in KAM with unbounded 
perturbations by very different methods appear in \cite{LiuY11}.

The method we use is based on 
the solution of a functional equation whose unknown is a
parameterization of the invariant torus
and devicing a Newton method to solve these equations by quadratically 
convergent schemes. We assume that the linearized evolution admits 
an invariant splitting. In the hyperbolic directions we can use
essentially soft functional analysis methods. There are subtleties 
such that we have to deal with unbounded perturbations and be very 
quantitative in the hyperbolic perturbation theory, and a center 
direction case, in which we have to deal with equations involving 
small divisors and use heavily the number theoretic properties of 
the equation and the symplectic geometry. 

The method does not rely on methods that require the evolution for all
initial data on a ball. 
Also, the symplectic geometry properties are 
used only  sparingly. We certainly  do no use action-angle variables. 
The solutions we construct are very unstable -- indeed, some perturbations 
near them may lead to a solution of the evolution 
equation -- but they are in some precise sense 
hyperbolic in the usual meaning of dynamical systems. 
 We expect that one can define stable and unstable 
manifolds for them and we hope to come back to this problem. 
Fortunately, the analysis on the center is very similar to the 
analysis in the finite dimensional case. The bulk of the work is 
in the study of hyperbolic splittings with unbounded perturbations. 
We hope that the theory developed here can be used in other contexts. 

Indeed, other theories of persistence of
invariant splitting (having significant applications
to PDE) have  already been developed 
in \cite{henry, PlissS99, ChowL95, ChowL96,HaragusI11}. 
The main difference between Section~\ref{change-nondeg} and 
 \cite{ChowL95, ChowL96} is that we take advantage of the smoothing properties 
and, hence, can deal with more singular perturbations. We also 
take advantage of the fact that the dynamics on the base is a rotation
whereas \cite{ChowL95,ChowL96} deal with more general dynamics. 
This allows us to obtain analyticity results which are false in 
the more general contexts considered in \cite{ChowL95, ChowL96}.

The method presented here applies even to some ill-posed equations. A
fortiori, it applies also to well posed equations.  Even then, it
presents advantages, notably our main result has an a-posteriori
format that can justify several expansions and deal with situations
with weak hyperbolicity, bootstrap regularity, establish smooth
dependence, etc. It also leads to efficient numerical algorithms. See
Section~\ref{sec:consequences}. In a complementary direction, we point
out that for finite dimensional problems the present methods leads to
efficient algorithms (See \cite{HuguetLS12}). The case without center
directions and no Hamiltonian structure has been considered in
\cite{CanadellH15}.

\subsection{Organization of the paper}

This paper is organized as follows: In Section~\ref{sec:overview} we
present an overview of the method, describing the steps we will take,
but ignoring some important precisions (e.g. domains of the
operators), and proofs. In Section~\ref{sec:framework} we start
developing the precise formulation of the results.  We first present
an abstract framework in the generality of equations defined in Banach
spaces, including the abstract hypothesis.  The general abstract
results are stated in Section~\ref{sec:statement-general} and in
Section~\ref{sec:results-particular} we discuss how to apply the
results to some concrete examples.  Some possible extensions are
discussed in Section~\ref{sec:consequences}.  The rest of the paper is
devoted to the proof of the results following the strategy mentioned
in the previous sections. One of the main technical results, which
could have other applications is the persistence of hyperbolic
evolutions with smoothing properties.  See
Section~\ref{change-nondeg}.

\section{Overview of the method} 
\label{sec:overview}

In this section, we present a quick overview describing 
informally  the steps 
of the method.  We present the equations that need
to be solved and the manipulations that need to be done ignoring issues 
such as domain of operators, estimates. These precisions  will be taken up
in Section~\ref{sec:framework}. This section can serve as motivation for
Section~\ref{sec:framework} since we use the 
formal manipulations to identify the issues that need to 
be resolved by a precise formulation.

We will discuss first abstract results, but in 
Sections~\ref{applications} and \ref{applications2}, we will show that the abstract 
result applies to concrete examples.

One example to keep in mind 
and which has served as an important motivation for us  is 
the Boussinesq equation 
\begin{equation}\label{bou0}
u_{tt}= \mu u_{xxxx}+u_{xx}+(u^2)_{xx}\,\,\,\, x \in  \torus,\,\,t\in \RR, \mu >0
\end{equation} 
In Section~\ref{applications2}, we will also consider the 
Boussinesq system. Other models in the literature which fit our scheme are 
the Complex Ginzburg-Landau equation and the derivative 
Complex Ginzburg-Landau equation for values of the parameters in 
suitable ranges.

\begin{remark} 
There are several equations called the
Boussinesq equation in the literature 
(in Section \ref{applications2} we also present the
Boussinesq system), notably the Boussinesq equation for fluids under
thermal buoyancy.  The paper \cite{McKean} uses the name Boussinesq
equation for $u_{tt}= - u_{xxxx}+(u^2)_{xx}$ and shows it
is integrable in some sense made precise in that paper. Note that this equation is
very different from \eqref{bou0} because of the sign of the fourth
space derivative and (less importantly), the absence of the term with
the second derivative.  The sign of the fourth derivative term causes
that the wave propagation properties of \eqref{bou0} and the equation
in \cite{McKean} are completely different. 

Sometimes people refer to
\eqref{bou0} with $\mu >0$ as the \emph{``bad''} Boussinesq equation,
and call the equation with $\mu < 0$, the \emph{``good''} Boussinesq 
equations. We note that the case $\mu > 0$ considered here
 is the case that appears in water waves 
(see \cite[Equation  (26)]{Boussinesq} ).  
\end{remark} 

\begin{remark}
We note that the fourth derivative in \eqref{bou0} is just the next term in 
the long wave expansion of the water wave problem 
(which is not a PDE, but rather a free boundary 
problem).  Equations similar to \eqref{bou0} appear 
in many long wave approximations for waves. 
See \cite{CraigGNS05,Craig08} for  modern discussions. 

The special solutions of \eqref{bou0} which are in the range 
of validity of the long wave approximation are  good approximate solutions of 
the water wave problem, but they are analyzable by PDE methods rather 
than the free boundary methods required by  the original problem. 
\cite{ChenNS11,LewickaM09}. Note that the solutions produced here lie
in the regime (low amplitude, long wave) where the equation \eqref{bou0} 
was derived, so that they provide approximate solutions  to 
the water wave problem. 
\end{remark}

\subsection{The evolution equation}
We consider an evolutionary 
PDE, which we write symbolically, 
\begin{equation} 
\label{eq:evolutionary-formal} 
\frac{d u}{dt}  = \X\circ u 
\end{equation} 
where $\X$ will be a differential and possibly non-linear  operator. 
This will, of course, require assumptions on domains etc. which 
we will take up in Section~\ref{sec:framework}.  For the moment, we will just 
say that $\X$ is defined in a domain inside a Banach space $X$. 
We will write
\begin{equation} \label{decomposition} 
\X(u) = \A u  + \N(u) 
\end{equation} 
where $\A$ is linear and $\N$ is a  nonlinear and  possibly 
unbounded operator.

The differential equations $\dot u =\A u$ 
will  not be assumed 
to generate dynamical evolution for all initial conditions
(we just assume that it  generates forward
and backward evolutions when restricted to appropriate subspaces). 
Of course, we will not assume that \eqref{eq:evolutionary-formal}
defines an evolution either. Lack of 
solutions for all the initial conditions will not be a severe problem for 
us since we will only try to produce some specific solutions.

The meaning in which \eqref{eq:evolutionary-formal} is to 
hold may  be  taken to be  the classical sense. As we will see we will 
take the space $X$ to consist of very differentiable functions
so that the derivatives can be taken in the elementary classical 
sense. As intermediate steps, we will also find useful some 
solutions in the \emph{mild} sense, satisfying some 
integral equations formally equivalent to \eqref{eq:evolutionary-formal}. 
The mild solutions require less regularity in $X$. 
Again, we emphasize that the solutions we try to produce 
are only special solutions. 

We will assume that the nonlinear operator $\N$ is \emph{``sub-dominant''}
with respect to the linear part. This will be formulated later
in Section~\ref{sec:framework}, but 
we anticipate that this means  roughly that
$\A$ is of higher order than $\N$ and 
that the evolution generated by  $\A$ when restricted to appropriate 
sub-spaces gains more derivatives than  the order of $\N$.  We will 
formulate all this precisely later.

We will follow \cite{henry} and formulate these effects 
by saying that the operator $\mathcal N$ is an analytic function 
from a domain $\U \subset X$ -- $X$ is a Banach space of smooth functions -- to $Y$  -- a space corresponding to 
less smooth functions and that the evolution operators map $Y$ back 
to $X$ with some quantitative bounds. 

In the applications that we present in 
Sections~\ref{applications} and \ref{applications2}, the equations we consider are
polynomial\footnote{The equations we consider are taken from 
the literature of approximations of water waves. In these derivations, 
it is customary to expand the non-linearity and keep only the lower
order terms} but the method can deal with more general nonlinearities.

\subsection{The linearized evolution equations}
Note that, in this set up we can define a linearized evolution 
equation around a curve $u(t)$ in $X$, i.e. 
\begin{equation}
\label{linearized}
\frac{d \xi}{dt}  = D\X \circ u(t)  \xi \equiv \A \xi + D\N(u(t)) \xi
\end{equation}
The equations \eqref{linearized} are to be considered as evolution equations for $\xi$ while 
$u(t)$ is given and fixed.  The meaning of the term $D\N$ could be understood if 
$\N$ is a differentiable operator from $X$ to $Y$. 

Of course, when $u(t)$ is solution of the evolution equation  
\eqref{eq:evolutionary-formal}, equations
\eqref{linearized} are the variational equations for the evolution. 
In our case, the evolution is not assumed to exist and, 
much less,  the variational equations are assumed to provide a description of 
the effect of the initial conditions on the variation.  We 
use these equations  \eqref{linearized}
 even  when $u(t)$ is not a 
solution of the evolution equation 
\eqref{eq:evolutionary-formal} and we will show that they are 
indeed a tool to modify an approximate solution $u(t)$ 
into a true solution.

Notice that \eqref{linearized} is non-autonomous, linear 
non-homogeneous, but that the existence of solutions is not guaranteed 
for all the initial conditions (even if the time dependent term is 
omitted).

In the finite dimensional case, equations of the form
\eqref{linearized} even when $u(t)$ is not a 
solution  are studied when performing a Newton method to construct a
solution; for example in 
multiple shooting. Here, we will use \eqref{linearized} in a similar way. We will see that \eqref{linearized}
can be studied using that $\A$ is dominant and has a splitting (and
that $u(t)$ is not too wild).

\subsection{The invariance equation}

Given a  fixed $\omega \in \real^\ell$ that satisfies some good
number theoretic properties (formulated precisely
 in Section~\ref{sec:Diophantine}), 
we will be seeking an embedding $K: \torus^\ell \rightarrow X$ in such 
a way that 
\begin{equation}
\label{eq:embedding} 
\X \circ K = DK \cdot \omega
\end{equation}
Note that if \eqref{eq:embedding}  holds, then,  for
any $\theta \in \torus^\ell$, 
$u(t) = K(\omega t + \theta)$ will be  solution of 
\eqref{eq:evolutionary-formal}.  
Hence, when we succeed in producing a solution of 
\eqref{eq:embedding}, we will have a $\ell$-parameter family of quasi-periodic 
solutions. The meaning of these parameters is the origin of the  phase as 
is very standard in the theory of quasi-periodic functions.

\subsection{Outline of the main result}

The main ingredient of the main result, Theorem~\ref{existence}  is that we will assume given an 
approximate solution $K_0$ of \eqref{eq:embedding}. 
That is, we are given an embedding $K_0$ in such a way that 
\begin{equation}
\label{eq:embedding-approx} 
\X \circ K_0 -  DK_0 \cdot \omega \equiv e 
\end{equation}
is small enough. We will also assume that the 
linearized evolution 
 satisfies some non-degeneracy assumptions. The conclusions is 
that  there is a true solution close to the original approximate solutions. 
Theorems of these form in which we start from an approximate solution
and conclude the existence of a true one are often called \emph{`` a posteriori''} theorems. 

In the concrete equations that we consider in the applications, the approximate solutions will be 
constucted using Lindstedt series. 

The sense in which the error $e$ is small requires defining
appropriate norms, which will be taken up in 
Section~\ref{sec:framework}. The precise form 
of the non-degeneracy conditions will be 
motivated by the following discussion which specifies the steps we 
will perform for the Newton method for the linearized equation

\begin{equation}
\label{eq:invariance-linearized} 
\frac{d u}{dt}  = D\X \circ K_0(\theta + \omega t) u 
\end{equation}

The non-degeneracy conditions have two parts. 
We first  assume that for each $\theta \in \torus^\ell$, 
 the linearized equation 
satisfies some spectral properties. 
These spectral properties mean roughly  that there  are solutions of 
\eqref{eq:invariance-linearized}  that decrease exponentially in 
the future (stable solutions), others that decrease 
exponentially in the past (unstable solutions),   and some center directions
that can grow or decrease with a smaller exponential rate. 
The span of these three class of solutions is the whole space. 
We will also assume that the evolutions, when they can be defined, 
gain regularity. 

In the ODE  case, this means that the linearized 
equation admits an exponential trichotomy in the sense of 
\cite{SackerS76b}. 

In the PDE case, there are some subtleties not present in the ODE
case.  For instance, the vector field is not differentiable and is
only defined on a dense subset.

We will 
not assume that \eqref{eq:invariance-linearized} defines 
an evolution for all time and all the initial conditions. 
We will however assume that \eqref{eq:invariance-linearized}
admits a solution forward in time for initial conditions in 
a space (the center stable space) and backwards in time 
for the another space (the center unstable space). 
We will furthermore assume that the center stable and center unstable
 spaces span 
the whole space, and they have a finite dimensional intersection
(we will also assume that they have a finite angle, which we 
will formulate as saying that the projections are bounded). 
We emphasize that we will not assume that the evolution forward of 
\eqref{eq:invariance-linearized} can be defined outside of the 
center stable space nor that the backward evolution  can
be defined outside of the center unstable space.

Furthermore, we will assume that the 
evolutions defined in these spaces are smoothing. Of course, these subtleties 
are only present when we consider evolutions generated by unbounded 
operators and are not present in the ODE case.

A crucial  result for us  is Lemma~\ref{iterNH} which shows 
that this structure (the trichotomy 
with smoothing)  is stable  under the addition of unbounded 
terms of lower order. 
We also present very quantitative estimates on 
the change of the structure under perturbations. Note that 
the result is also presented in an a-posteriori format so 
that we can use  just the existence of an approximate 
invariant splitting. 

The smoothing properties along the 
stable directions  overcome the loss of regularity of 
the perturbation. Hence, we can obtain a persistence of 
the spaces under unbounded perturbations of 
lower order.  A further argument shows the persistence of the
smoothing properties. 
The result in 
Lemma~\ref{iterNH}  can be considered as a generalization of the 
finite dimensional result on stability of 
exponential dichotomies to allowing unbounded 
perturbations. 
 An important consequence is that, when $\N(u)$ is small enough
(in an appropriate sense)  we
can transfer the hyperbolicity from $\A$ to the approximate solution, 
which is the way that we construct the approximately 
hyperbolic solutions in the applications.

We will need to assume that in the center directions, there is some
geometric structure that leads to some cancellations (sometimes called
\emph{automatic reducibility}). These cancellations happen because of
the symplectic structure. We note that, in our case, we only need a
very weak form of symplectic structure, namely that it can be made
sense of in a finite dimensional space consisting of rather smooth
functions.  Note that the infinitesimal perturbations do not grow in
the tangent directions. The preservation of the geometric structure
also implies that some of the perpendicular directions evolve not
faster than linearly. Hence, the tori we consider are never normally
hyperbolic and that for $\ell$-dimensional tori, the space of
directions with subexponential growth is at least $2\ell$
dimensional. We will assume that the tori are as hyperbolic as
possible while preserving of the symplectic structure. That is, the
set of directions with subexponential growth is precisely $2 \ell$
dimensional . These tori are called \emph{whiskered} in the finite
dimensional case.

We note that the geometric structure we need only requires 
to make sense as the restriction to an infinitesimal space 
and be preserved only in a set of directions. The geometric structure 
that appears naturaly in applications will be given by 
an unbounded form and many of the deeper features of 
symplectic structures in finite dimensions will not be available. 
Hence, it is important to note that the present method does 
not rely much in the symplectic structure. We do not rely on transformation 
theory we only use some geometric identities in finite dimensional spaces
to construct a good system of coordinates in finite dimensions
and to show that some (finite dimensional) averages vanish.  
In systems without the geometric structure, the system of coordinates
and the averages would require adjusting external parameters.

\begin{remark} 
 We  note that \eqref{eq:invariance-linearized} 
is formally the variation equation giving the derivative
of the flow of the evolution equation. 
This interpretation is very problematic since the equations we 
will be interested in do not define necesserally a flow. 

 An important part 
 of the effort in 
Section~\ref{sec:framework} consists in  defining these 
structures in the restricted framework considered in this paper
when many of 
the geometric operations  used in the finite dimensional case
are not available.
\end{remark}

We also need to make assumptions that are analogues of 
the twist conditions in finite dimensions. See Definition~\ref{ND2}. 
 The twist condition we 
will require is just that 
a  finite dimensional matrix is invertible. The matrix
is computed explicitly on the approximate solution and does
not require any global considerations on the differential equation.

\subsection{Overview of the proof} 
\label{sec:overview-proof}

The method of proof will be to show that, under the hypotheses we are making,
a quasi-Newton method for equation \eqref{eq:embedding} 
started in the initial guess, 
converges to a true solution. We emphasize that the unknown in 
equation  \eqref{eq:embedding}  is the embedding $K$ of $\torus^\ell$
into a Banach space $X$. 
Hence, we will need to introduce families of Banach spaces of embeddings
(the proof of the convergence 
will be patterned after  the corresponding proofs \cite{Moser66a, Zehnder75}).

For simplicity, we will only consider analytic spaces of embeddings. 
Note that the regularity of the embedding $K$ as a function of
their argument 
$\theta \in \torus^\ell$  is  different 
from  the regularity of  the functions $K(\theta) \in X$. 
The term $K(\theta)$ will be functions of 
the $x$ variable. The space $X$ encodes the regularity 
with respect to the variable $x$. Indeed, we will consider 
also other Banach spaces $Y$ consisting of functions of smaller 
regularity  in $x$. 

The Newton method consists in solving the equation 
\begin{equation}
\label{eq:Newton} 
\frac{d}{dt} \Delta(\theta + \omega t) 
-  D\X \circ K_0(\theta + \omega t) \Delta(\theta + \omega t)  = -e
\end{equation}
and then,  taking $K_0 + \Delta$ as an improved  solution. 

Clearly, \eqref{eq:Newton} is a non-homogeneous version of
\eqref{eq:invariance-linearized}. Hence, the spectral properties of
\eqref{eq:invariance-linearized} will play an important role in the 
solution of   \eqref{eq:Newton} by the variations of constants formula. 
Following \cite{fontichdelLS07a,fontichdelLS071}, 
we will show that using the trichotomy, 
we can decompose 
\eqref{eq:Newton} into three equations, each one of them corresponding to 
one of the invariant subspaces.

The equations along the stable and 
unstable directions can be readily solved using the variation of parameters
formula also known as Duhamel formula
(which holds in the generality of semigroups)
since the exponential contraction and the smoothing allow us
to represent the solution as a convergent integral. 

The equations along the center direction, as usual, are much more delicate. 
We will be able to show the geometric properties to 
establish the \emph{automatic reducibility}. That is, we will show 
that there is an explicit 
change of variables that reduces the equation along the 
center direction to the standard cohomology equations over rotations
(up to an error which is quadratic -- in the Nash-Moser sense -- 
 in the original  error in the invariance equation). 
It is standard that we can solve these cohomology equations under 
Diophantine assumptions on the rotation and that we can obtain 
\emph{tame} estimates in the standard meaning of KAM theory
\cite{Moser66a, Moser66b, Zehnder75}.  
One geometrically delicate point is that the cohomology equations admit
solutions provided that certain averages vanish. The vanishing of these 
averages over perturbations is related to the exactness properties of 
the flow. Even if this is, in principle,  much more delicate in the infinite 
dimensional case, it will turn out to be very similar to the finite 
dimensional case, because we will work on the restriction to the center 
directions which are finite dimensional.  The procedure is 
very similar to that in \cite{fontichdelLS07a}. 

We will not solve the linearized  equations in center direction 
exactly. We will solve them up to 
an error which is quadratic in the 
original error. The resulting modified Newton method, 
will still lead to  quadratically 
small error  in the sense of Nash-Moser theory
and can be used as the basis of a quadratically convergent method.

Once we have the Newton-like  step under control 
we need to show that the step can be iterated 
infnitely often and it converges to the solution of the problem. 

A necessary step in the strategy is to show  
stability of the non-degeneracy assumptions.  The stability of 
the twist conditions is not difficult since it amounts to the invertibility 
of a finite dimensional matrix, depending on the solution. 
The stability of spectral theory is reminiscent of the standard 
stability theory for trichotomies \cite{SackerS76b, HirschPS77}
but it requires significant more work since we need to use the smoothing 
properties of the evolution semigroups
to control the fact that the perturbations are unbounded. Then, we need
to recover the smoothing properties to be able to 
solve the cohomology equations. For this functional 
analysis set up,  we have found very inspiring the
 \emph{``two spaces approach''} of \cite{henry} and some of the geometric
constructions of \cite{henry,PlissS99, ChowL95,ChowL96}.
Since the present method is part of an iterative procedure,
we will need very detailed estimates of the change.

We note that rather than presenting the main result as 
a persistence result, we prove an a-posteriori result showing 
that an approximate invariant structure implies the existence of 
a truly invariant one and we bound the distance between the original
approximation and the truly invariant one. This, of course, implies
immediately the persistence results.

\section{The precise framework for the results} 
\label{sec:framework} 

In this section we formalize the framework for our  abstract results.  
 As indicated above, we will present carefully 
the technical assumptions on 
domains, etc. of the  operators under consideration, 
and the symplectic forms.  We will formulate 
spectral non-degeneracy conditions and the twist non-degeneracy assumption.

In Section~\ref{sec:statement} we will 
state our main abstract  result, Theorem~\ref{existence}. 
The proof will be obtained in the subsequent Sections. 
Then, in Sections~\ref{applications} and \ref{applications2} we will show 
how the abstract theorem applies to several examples.
The abstract framework has been chosen so that 
the examples fit into it, so that the reader is encouraged 
to refer to these sections for motivation.   Of course, the abstract 
framework has been formulated with the goal that it applies to
other problems in a more or less direct manner. We leave 
these to the reader.

We note that the formalism we use is inspired by the 
\emph{two-space formalism} of
\cite{henry}. We consider two Hilbert spaces $X$ and $Y$. The differential
operators, which are unbounded from a space to itself will be 
very regular operators considered as operators from $X$ to $Y$.
Some evolutions will have smoothing properties and map $Y$ to $X$ 
with good bounds.

\subsection{The evolution equation} 

We will consider an evolution equation 
as in \eqref{eq:evolutionary-formal} and \eqref{decomposition}. 

We assume

{\bf H1} There are two complex  Hilbert spaces

\begin{equation*}
X \hookrightarrow Y ,
\end{equation*}   
with continuous embedding.
The space $X$ (resp. $Y$) is endowed with the norm $\|.\|_X$ (resp. $\|.\|_Y$) 

We denote by 
$\L (X_1, X_2)$ 
the space of bounded linear operators from $X_1$ to $X_2$.

We will assume furthermore that $X$ is dense in $Y$. 
We will assume in applications that $\A$ and $\N$ are such that 
they map real functions into real functions; it will be 
part of the conclusions that the solutions of the invariance equations we obtain are then real.

\medskip
{\bf H2} The  non-linear part $\N$ of  \eqref{decomposition} is 
an analytic function from  
$X$ to $Y$. 

\medskip
We recall that the definition of an analytic function is 
that it is locally defined by a norm 
convergent sum 
of multilinear operators. Since we will be considering 
an implicit function theorem, it suffices to consider just one
small neighborhood and a single expansion in multi-linear operators. 
The examples in Sections~\ref{applications} and \ref{applications2} have nonlinearities 
which are just polynomials (finite sums of multilinear operators).

\begin{remark} 
In  our case, it seems that some weaker assumptions would work. 
It would suffice that $\X \circ K(\theta)$ is analytic for any 
analytic  embedding $K$. In many situations this is equivalent to 
the stronger definition \cite[Chapter III]{hilleP}. 
In the main examples that we will consider
 and in other applications, 
the vector field $\X$ is a polynomial. 
\end{remark}

\begin{remark}
It also seems possible that one could deal with finite differentiable 
problems. For the experts, we note that there are two types of KAM smoothing techniques: 
either smoothing only the solutions in the iterative processs 
(single smoothing)\cite{Schwartz, CallejaL}
or smoothing also the problems (double smoothing)
\cite{Moser66a, Zehnder75}.  In general, double smoothing 
techniques produce better differentiability in the results. On 
the other hand, in this case, the approximation of the 
problems seems fraught with difficulties (how to define smoothings 
in infinite dimensional spaces, also for unbounded operators). 
Nevertheless, single smoothing methods do not seem to have 
any problem. Of course, if the non-linearities have some special 
structure (e.g. they are obtained by composing with a non-linear function) 
it seems that a double smoothing could also be applied. 
\end{remark}

\begin{remark} 
Note that the structure of $\X$ assumed in \eqref{decomposition} allows
us to estimate always the errors in $Y$, even if the unknown $K$ are in $X$. 

This is somewhat surprising since  the loss of derivatives from 
$X$ to $Y$ is that of the subdominant term $\N$. We expect that the 
results of applying $\A$ to elements in $X$ does not lay in $Y$. 

Nevertheless, using the structure in \eqref{decomposition} and the 
smoothing properties we will be able to show by induction that if 
the error is in $Y$ at one step of the iteration, we can estimate the 
error in subsequent steps of the iteration. Note that the new error is 
the error in the Taylor approximation of $\X \circ(K + \Delta)$, 
which is the error in the Taylor approximation of $\N\circ(K + \Delta)$. 

Of course, we also need to ensure that the initial approximation satisfies 
this hypothesis. In the practical applications, we will just take a
trigonometric polynomial. 
\end{remark}

\subsection{Symplectic properties}

We will need that there is some exact symplectic structure. 
In our method, this does not play a very important role. 
We just use  the preservation of the  symplectic structure
to derive certain identities in the  (finite dimensional) 
center directions. These are called \emph{automatic 
reducibility} and use the exactness to show that some 
(finite dimensional) averages vanish (\emph{vanishing lemma})
so that we can prove the result without adjusting parameters. 

We will assume that there is a (exact)  symplectic form in 
the space $X$ and that the evolution equation
\eqref{eq:evolutionary-formal} 
can be written in Hamiltonian form in a suitable weak sense, which 
we will formulate now.

Motivated by the examples in Sections~\ref{applications}
and \ref{applications2} and others in the literature, 
 we will assume that the symplectic 
form is just a constant operator over the whole space $X$ 
(notice that we can identify all the tangent spaces). 
We will not consider the fact that the symplectic form depends
on the position. Note that heuristically, the fact that the 
symplectic form is constant ensures $d \Omega = 0$ and, because 
we are considering a Banach space, Poincar\'e lemma would give
$\Omega = d\alpha$. We will need only weak forms of these facts. General  symplectic forms in 
infinite dimensions may present surprising  phenomena not 
present in finite dimensions \cite{chernoffM74,bambusi99,KappelerP03}. 
Fortunately, we only need very few properties in 
finite dimensional subspaces in a very weak sense.

\medskip

{\bf H3} 
There is  an anti-symmetric bounded operator $\Omega: X\times X \rightarrow \complex$
taking real values on real vectors. 

The operator $\Omega$ is assumed to be non-degenerate in the sense that 
$\Omega(u, v) = 0 \ \forall v \in X$, implies $u = 0$. 

$\Omega$ will be refered to as \emph{the symplectic form}.

\medskip
As we mentioned above, we are assuming that the symplectic form is 
constant. 

In some of the applications, $\Omega$ could be a differential operator
or the inverse of a differential operator. 
When $\Omega$ is a differential operator, the fact that 
$\Omega$ is bounded only means that we are considering a space $X$ 
consisting of functions with high enough regularity. 
The form $\Omega$ could be unbounded in $L^2$ or in spaces consisting of functions 
with lower regularity than the functions in $X$. 

Notice that given a $C^1$ embedding $K$ of $\torus^\ell$ to $X$ we can define 
the pull-back of $\Omega$ by the customary formula
\begin{equation} \label{pullback} 
K^*\Omega_\th (a, b) = \Omega( DK(\th) a, DK(\th)b) 
\end{equation} 
The form $K^* \Omega$ is a form on $\torus^\ell$. If $K$ is $C^r$ as a mapping form $\torus^\ell$ to $X$ (in our applications it will be
analytic), the form $K^* \Omega$ will be $C^{r -1}$. 

\medskip

{\bf H3.1} We will assume that $\Omega$  is exact in the sense that, for all 
$C^2$ embeddings $K: \torus^\ell \rightarrow X$ we have 
\begin{equation}\label{weakexact} 
K^* \Omega = d \alpha_K 
\end{equation} 
with $\alpha_K$ a one-form on the torus. 

\medskip

In the applications we will have that $\alpha_K = K^* \alpha$ for 
some 1-form in $X$. Note that if $\Omega$ is not constant, we will need that $\alpha$ depends on the position. 

\medskip

{\bf H4} 
There is an analytic function $H:X \rightarrow \complex$ such 
that  for any $C^1$ path $\gamma: [0,1] \rightarrow X$, we 
have 
\begin{equation}\label{eq:Hamiltonian-form}
H(\gamma(1)) - H(\gamma(0)) = \int_0^1 \Omega(\X(\gamma(s)), \gamma'(s) )  \, ds 
\end{equation} 

Note that {\bf H4} is a weak form of the standard Hamilton equations
$i_\X \Omega = d H$. We take the Hamiltonian equations and 
integrate them along a path to obtain \eqref{eq:Hamiltonian-form}.

\medskip

A consequence of {\bf H3} and {\bf H4} is  we have that for 
any closed loop $\Gamma$ with image in $\TT^\ell$
\begin{equation}\label{vanishing-assumption} 
\int_{\Gamma} i_{\X \circ K} K^*\Omega=0.
\end{equation}

\begin{remark} 
The formulation of \eqref{eq:Hamiltonian-form} 
is a very weak version of the Hamilton equation. 
In particular, it is somewhat weaker than the formulation 
in \cite{Kuksin06}, but on the other hand, we will assume more 
hyperbolicity properties than in \cite{Kuksin06}. 
\end{remark} 

\subsubsection{Some remarks on the notation for the  symplectic form} 

The symplectic form can be written as
\[
\Omega(u,v) = \langle u, J v \rangle_Z 
\]
where $Z$  is a Hilbert space and 
$\langle \cdot, \cdot  \rangle_Z $ denotes the inner product in $Z$
and $J$ is a (possibly unbounded) operator in $Z$ -- but bounded  from $X$ to 
$Z$. 

Once we have defined the operator $J$, we can talk about  
the operator $J^{-1}$ if it is defined in some domain. 

The evolution equations can be written formally
\begin{equation} \label{hamiltonian} 
\frac{d u}{dt}  = J^{-1} \nabla H(u) 
\end{equation}
where $\nabla H$ is the gradient understood in the sense of the metric in $Z$. 
In the concrete applications here, we will take $Z = L^2$, $X = H^m$, 
$Y = H^{m-a}$ for large enough $m$. Of course, in well posed systems
we can take $X = Y$. 

We recall that the definition of a gradient (which is a vector field) requires 
a metric to identify differentials with vector fields. This is true even in finite 
dimensions. In infinite dimensions, there are several more subtleties such as the 
way that the derivative is to be understood.  Hence, we will not use 
much the gradient notation and the operator $J$ except in 
Section~\ref{sol-center}, which is finite dimensional.

\begin{remark}
In the Physical literature (and in the traditional calculus of 
variations) it is very common to take $Z$ to be always $L^2$, even if 
the functions in the space $X$ or $Y$ are significantly more differentiable. In some 
ways the space $Z = L^2$ is considered as fixed and the spaces $X, Y$ are mathematical choices. 
So that the association of the symplectic form to a symplectic operator is 
always done with a different inner product $Z$. 
The book \cite{Neuberger} contains a systematic treatment of the  use of gradients associated to 
Sobolev inner products. 
\end{remark}

\subsection{Diophantine properties}
\label{sec:Diophantine}
We will consider frequencies that satisfy the standard Diophantine properties. 

\begin{defi}\label{rotVF}
Given $\kappa>0$ and $\nu \geq \ell-1$, we define $\D(\kappa,\nu)$ 
as the set of frequency vectors $\omega \in \real^\ell$ satisfying the 
Diophantine  condition:
\begin{equation}
\label{eq:Diophantine}
|\omega \, \cdot \,k|^{-1} \leq \kappa |k|^{\nu},\,\,\,\,\,\,\mbox{for all $k\in\integer^\ell-\left \{ 0\right \}$}
\end{equation}
where $|k|=|k_1|+...+|k_\ell|$. We denote 
 $$\D(\nu) = \cup_{\kappa > 0} \D(\kappa, \nu). $$ 
\end{defi}  

It is well known that when $\nu > \ell$, the set 
$\D(\nu)$ has full Lebesgue measure.

\subsection{Spaces of analytic mappings from the torus} 
\label{sec:embeddings} 
We will denote $D_{\rho}$ the complex strip of width $\rho$, i.e.  
\begin{equation*}
D_{\rho}=\left \{ z\in \complex^\ell/\integer^\ell: \,\,|\mbox{Im}\,z_i| < \rho\,\,i=1,...,\ell\right \}.
\end{equation*}

We introduce the following $C^m$-norm
for $g$ with values in a Banach space $W$
\begin{equation*}
|g|_{C^m(\mathcal{B}),W}=\displaystyle{\sup_{0 \leq |k| \leq m� } }
\displaystyle{\sup_{ z \in \mathcal{B}}} \,||D^kg(z)||_W. 
\end{equation*} 

Let $\H$ be a Banach space and consider $\A_{\rho,\H}$ the set of continuous functions on
$\overline{D_{\rho}}$, analytic in $D_{\rho}$ with values in $\H$.     
We endow this space with the norm
\begin{equation*}
\|u\|_{\rho,\H}=\displaystyle{\sup_{z \in D_{\rho}}}\|u(z)\|_{\H}.
\end{equation*}

$(\A_{\rho,\H},\|\cdot\|_{\rho,\H})$ is
well known to be a Banach space.  Some particular cases which will be important for 
us are when the space $\H$ is a space of linear mappings (e.g. projections).

We will also need some norms for linear operators. Fix $\theta \in
D_\rho$ and consider $A(\theta)$ a
continuous linear operator from $\H_1$ into $\H_2$,
two Banach spaces. Then we define
$\|A\|_{\rho,\H_1,\H_2}$ as
\begin{equation*}
\|A\|_{\rho,\H_1,\H_2}=\displaystyle{\sup_{z \in D_{\rho}}}\|A(z)\|_{\L(\H_1,\H_2)},
\end{equation*}
where $\L(\H_1,\H_2)$ denotes the Banach space of
linear continuous maps from $\H_1$ into $\H_2$
endowed with the supremum norm. 

\begin{defi}
Let $\TT^\ell = \RR^\ell/\ZZ^\ell$ and
$f \in L^1(\torus^\ell,\mathcal H)$ where $\mathcal H$ is some Banach space. We denote $\avg(f)$ its average on the $\ell$-dimensional torus, i.e. 
\begin{equation*}
\avg (f)  =\int_{\torus^\ell}f(\th)\,d\th.
\end{equation*}
\end{defi}

\begin{remark}
Of course, in the previous definition, since $\mathcal H$ might be an infinite-dimensional space, 
the above integral, in principle,  has to be understood as a Dunford integral. 
Nevertheless, since we will consider rather smooth functions, 
it will agree with simple approaches such as Riemann integrals. 
\end{remark}

\subsection{Non-degeneracy assumptions}
This section is devoted to the non-degeneracy assumptions associated
to approximate solutions $K$ of \eqref{eq:embedding-approx}. We first deal with
the spectral non degeneracy conditions. The crucial quantity is the
linearization equation around a map $K$ given by
\begin{equation}\label{varNH}
\frac{d\Delta}{dt}=A (\th+ \omega t)\Delta,
\end{equation}
where $A(\th) = D(\X \circ K) (\th) $ is an operator mapping $X$ into $Y$.

Roughly, we want to assume that there is a splitting of the space into directions 
on which the evolution can be defined either forwards or backwards
and that the evolutions thus defined are smoothing. 
We anticipate that in Section~\ref{change-nondeg}, we will present 
other conditions that imply Definition~\ref{ND1}. We will just need to 
assume approximate versions of the invariance. 

\begin{defi}\label{ND1} {\bf Spectral non degeneracy} 

We will say that an embedding $K:D_\rho \to X$ is spectrally non degenerate if for every $\theta$ in $D_\rho$, we can find a
splitting 
\begin{equation}\label{splitting}
X=X_\th^s \oplus X_\th^c \oplus X_\th^u 
\end{equation} 
with associated bounded projection
$\Pi_{\th}^{s,c,u} \in \mathcal L(X,X)$ and where $X^{s,c,u}_\th$
are in such a way that: 

\begin{itemize}
\item {{\bf SD1}}
The mappings  $\theta \rightarrow \Pi^{s,u,c}_\theta$ are 
in $\A_{\rho, \L(X,X)}$ (in particular,  analytic).

\item {{\bf SD2}}
The space 
 $X^c_\th$ is finite dimensional with dimension $2\ell$. Furthermore the restriction of the operator $J$ to $X^c_\th$ denoted $J_c$ induces a symplectic form on $X^c_\th$ 
which is preserved by the evolution on $X^c_\th$ (see below). 
\item{{\bf SD3}}
We can find families of 
operators 
\begin{equation} 
\begin{split} 
& U_\theta^{s}(t) : Y^s_\theta  \rightarrow X^s_{\th + \omega t} 
 \quad t > 0 \\
& U_\theta^{u}(t) : Y^u_\theta  \rightarrow X^u_{\th + \omega t} 
 \quad t < 0 \\
& U_\theta^{c}(t) : Y^c_\theta  \rightarrow X^c_{\th + \omega t} 
 \quad t \in \real \\
\end{split} 
\end{equation} 
such that: 
\begin{itemize} 
\item {{\bf SD3.1}}
The families $U^{s,c,u}_\th$ are cocycles over the rotation of angle $\omega$
(cocycles are the natural generalization of semigroups 
for non-autonomous systems)
\medskip
\begin{equation}
U^{s,c,u}_{\th + \omega t}(t') U^{s,c,u}_{\th}(t) = 
U^{s,c,u}_{\th}(t + t')
\end{equation} 
\item{{ \bf SD3.2}} The operators $U^{s,c,u}_\th$ are smoothing in the time direction 
where they can be defined and they satisfy assumptions in the 
quantitative rates. 
\medskip
 There exist
 $\alpha_1,\alpha_2 \in [0,1)$, $\beta_1,\beta_2,\beta_3^+, \beta_3^->0$ and
 $C_h>0$ independent of $\theta$ such that the evolution 
operators are characterized by the following rate conditions:
\begin{equation}\label{rate1} 
 \|{U}^s_{\th}(t) ||_{\rho,Y,X} \leq {C}_h e^{-\beta_1 t}t ^{-\alpha_1},\qquad t >  0,
\end{equation}
\begin{equation}\label{rate2}
\|{U}^u_{\th}(t) \|_{\rho,Y,X} \leq {C}_h{e^{\beta_2  t}}{|t|^{-\alpha_2}},
\qquad t <   0,
\end{equation}
\begin{equation}\label{rate3}
\begin{split}
& \|{U}^c_{\th}(t) \|_{\rho,X,X} \leq {C}_h e^{{\beta}_3^+ t},
\qquad t > 0  \\
& \|{U}^c_{\th}(t) \|_{\rho,X,X} \leq {C}_h e^{{\beta}_3^- |t|},
\qquad t <  0 
\end{split}
\end{equation} 
with $\beta_1 >\beta_3^+ $ and $\beta_2 >\beta_3^- $.

\item {{\bf SD3.3}}The operators $U^{s,u, c}_\th$ are fundamental solutions of the 
variational equations in the sense that
\begin{equation} \label{variational-1}
\begin{split} 
U^s_\theta(t) &= Id + \int_0^t  A(\theta - \omega \sigma ) U^{s}_{\theta-\omega\sigma}(\sigma)\, d\sigma \quad t > 0\\
U^u_\theta(t) &= Id +  
\int_0^t A(\theta - \omega \sigma ) U^{u}_{\theta-\omega\sigma}(\sigma)\, d \sigma
 \quad t < 0\\
U^c_\theta(t) &= Id +  
\int_0^t A(\theta - \omega \sigma ) U^{c}_{\theta-\omega\sigma}(
\sigma) \, d\sigma 
\quad t \in {\mathbb R} 
\end{split} 
\end{equation} 

\end{itemize}
\end{itemize} 
\end{defi}

\begin{remark} 
Note that as  consequence of the integral equations \label{variational} 
and the rate conditions \eqref{rate1}, \eqref{rate2}, \eqref{rate3} 
we have, using just the triangle inequality
\[
|| U_\th^s(t)||_{\rho, Y,Y} \le 1 + \int_0^t A s^{-\alpha_1} e^{-\beta_1} s \, ds
\]
Proceeding similarly for the others, we obtain 
\begin{equation}\label{newratesY}
\begin{split} 
& \|{U}^s_{\th}(t) ||_{\rho,Y,Y} \leq {\tilde C}_h e^{-\beta_1 t}\qquad t >  0,\\
& \|{U}^u_{\th}(t) \|_{\rho,Y,Y} \leq {\tilde C}_h{e^{\beta_2  t}},
\qquad t <   0, \\
& \|{U}^c_{\th}(t) \|_{\rho,Y,Y} \leq {\tilde C}_h e^{{\beta}_3^+ t},
\qquad t > 0  \\
& \|{U}^c_{\th}(t) \|_{\rho,Y,Y} \leq {\tilde C}_h e^{{\beta}_3^- |t|},
\qquad t <  0 \\
\end{split}
\end{equation} 
\end{remark} 

\begin{remark} 
We are not aware of any general  argument that would show 
that:
\begin{equation}\label{newratesX}
\begin{split} 
& \|{U}^s_{\th}(t) ||_{\rho,X,X} \leq {\tilde C}_h e^{-\beta_1 t}\qquad t >  0,\\
& \|{U}^u_{\th}(t) \|_{\rho,X,X} \leq {\tilde C}_h{e^{\beta_2  t}},
\qquad t <   0, \\
& \|{U}^c_{\th}(t) \|_{\rho,X,X} \leq {\tilde C}_h e^{{\beta}_3^+ t},
\qquad t > 0  \\
& \|{U}^c_{\th}(t) \|_{\rho,X,X} \leq {\tilde C}_h e^{{\beta}_3^- |t|},
\qquad t <  0 \\
\end{split}
\end{equation}
follow from the other assumptions. Needless to say, we would be happy to hear about one. 

One can, however, clearly have that
since $|| U_\th ^s(t)||_{X,X} \le || U_\th ^s(t) ||_{Y,X}$ so 
that the semigroups are exponentially decreasing for large $t$. 

One notable case, which happens  in practice, when 
one can deduce \eqref{newratesX} is when the spaces X and Y are 
Hilbert spaces. In such a case, taking  Hilbert space 
adjoints in \eqref{variational}
we obtain:
\[
U^s_\th(t)^* = Id + \int_0^t U^{s}_{\theta-\omega\sigma}(\sigma)^* 
 A(\th - \omega \sigma )^* \, d\sigma \quad t > 0
\]
and using the fact that the  adjoints preserve the norm, we can easily 
obtain the bounds in the same way as \eqref{newratesY}.
\end{remark} 

\begin{remark}\label{hamiltoniancase} 
We remark that when the equation preserves a symplectic  structure, 
we can have without loss of 
generality 
\begin{equation} 
\label{eq:symmetryexponent} 
\beta_3^+ = \beta_3^-, \qquad \beta_1 = \beta_2. 
\end{equation}  Conversely, if \eqref{eq:symmetryexponent} is 
satisfied, the center direction automatically preserves a 
symplectic structure.  See Lemma~\ref{omegadeg}. 

We anticipate that the results in Section~\ref{change-nondeg}
on persistence of trichotomies (a fortiori dichotomies) with smoothing 
are developed without assuming that the equation is Hamiltonian and, hence apply also 
to dissipative equations.  Similarly, the solutions of linearized equations 
in the hyperbolic directions developed in Section~\ref{sec:linearized-hyperbolic} are obtained without  using the Hamiltonian structure. 
The Hamiltonian structure is used only to deal with the linearized equations 
in the center direction in Section~\ref{sol-center}. 
\end{remark}

Let us comment on  the previous spectral non-degeneracy conditions.

The first observation is
that, if we assume 
that the spaces $X,Y$ are 
Sobolev spaces of high enough index (so that the functions in them 
are $C^r$ for $r$ high enough) 
then we have that \eqref{variational-1} holds in a classical sense if it holds in 
the sense of mild solutions (the sense of integral equations). In the 
applications we have in mind, it is always possible to take the 
spaces $X$, $Y$ that have arbitrarily high derivatives. 

Then, \eqref{variational-1} is just a 
form of 
\begin{equation} \label{variational-mild}
\begin{split} 
\frac{d}{dt} U^s_\theta(t) &= A(\theta + \omega t ) U^{s}_{\theta}(t) \quad t > 0\\
\frac{d}{dt} U^u_\theta(t) &= A(\theta + \omega t ) U^{}_{\theta}(t) \quad t < 0\\
\frac{d}{dt} U^c_\theta(t) &= A(\theta + \omega t ) U^{c}_{\theta}(t) \quad t \in \real 
\end{split} 
\end{equation} 
Often \eqref{variational-1} is described as saying that the derivatives in 
\eqref{variational-mild} are understood in the mild sense. 

Making sense of the integrals in \eqref{variational-1}
is immediate after some reflection. 
Our conditions just require the existence of an 
evolution for positive and negative times on certain subspaces.  
The important conditions on these evolutions are
the  characterization of the splitting by rates 
\eqref{rate1}-\eqref{rate2}, expressing the fact that the operators are
 bounded and smoothing from $Y$ into $X$ 
(recall that $X \hookrightarrow Y$). 
If the system were autonomous, such properties would hold under 
some spectral assumptions on the operator
 $A(\theta)$ (bisectoriality or generation of strongly 
continuous semi-groups, see \cite{pazy}).

Since  the spaces $X^c_\th$ and $Y^c_\th$ are 
finite dimensionals 
and of the same dimension, 
the evolution $U^c_\th(t)$ can be considered as 
an operator from  $Y_\th^c$ to $Y^c_{\th + t \omega}$. 

In the finite dimensional case (or in the cases where there is 
a well defined evolution), property {\bf SD.1} follows from the 
contraction rates  assumption {\bf SD.3} by a fixed point 
argument in spaces of analytic functions. 
See \cite{HaroL06}. In our case, we have not been 
able to adapt the finite dimensional argument, that is 
why we have included it as an independent assumption (even if 
may end up be redundant). 
We note that {\bf SD.1}, {\bf SD.3} are 
 clearly true when $\mathcal N \equiv 0$ 
and in this paper we  will show it is stable under perturbations, hence
{\bf SD.3} will hold for all small enough  $u$. This suffices for 
our purposes, so we will not pursue the question of whether 
{\bf SD.1} can be obtained from {\bf SD.3} in general. 

The fact that $\Omega |_{X_\th^c} $ is non-dengenerate (which is a part 
of {\bf SD.2}) follows from the rate conditions {\bf SD.3} 
as we show in Lemma~\ref{omegadeg}.

One situation when all the above abstract properties are satisfied is
when the evolution is given just by the linear part $\A$, i.e.  $\N
\equiv 0$.  The assumptions of our set up are verified if the spectrum
of $\A$ is just eigenvalues of finite multiplicity and the spectrum is
the union of a sector around the positive axis, another sector around
the negative axis and a finite set of eigenvalues of finite
multiplicity around the imaginary axis.  Then, the stable space is the
spectral projection over the sector in the negative real axis, the
unstable space will be the spectral projection over the sector along
the positive axis and the center directions will be the spectral space
associated to the eigenvalues in the finite set.  There are many
examples of linear operators satisfying these properties.

It will be important
that  the main result of Section \ref{change-nondeg} is 
these structures persist when we add a lower order perturbation which
is small enough. Indeed, we will show that if we 
find splittings that satisfy them approximately enough, there is
true splitting nearby. This would allow to validate numerical 
computations, formal expansions, etc.

\subsubsection{The twist condition}
As it is standard in KAM theory, one has to impose another
non-degeneracy assumption, namely the twist condition. This is the
object of the next definition. Notice that it amounts to 
a finite dimensional matrix being invertible. It is 
identical to the conditions that were used in 
the finite dimensional cases \cite{LGJV05,fontichdelLS071}.

\begin{defi}\label{ND2}
Denote 
  $N(\th)$ 
the $\ell \times \ell$ matrix 
such that $N(\th)^{-1}=DK(\th)^\perp DK(\th) $ 

Denote $P(\th)=DK(\th)N(\th)$

Let  $J_c$ stand for  restriction of symplectic operator $J$ to 
$X^c_\th$. We will show in Lemma~\ref{omegadeg} 
that the form $\Omega_c \equiv \Omega|_{X_\th^c}$ is non-degenerate so
that the operator $J_c$ is invertible.

We now define the twist matrix $S(\th)$ (the motivation will become aparent 
in Section~\ref{sol-center}, but it is identical to the definition in 
the finite dimensional case in \cite{LGJV05,fontichdelLS071}). The average of the matrix 

\begin{equation}
\label{twistdefined}
S(\th)=N(\th) DK(\th)^\perp [J_c^{-1}\partial_\omega(DK\,N)-A J_c^{-1}(DK\,N)](\th)
\end{equation}
is non-singular.

\end{defi}  

We note that the  matrix $S$ in \eqref{twistdefined} 
is a very explicit expression 
that can be computed out of the approximate solution of 
the invariant equation and the invariant bundles just taking 
derivatives, projections and performing algebraic operations.
So that it is easy to verify in applications when we are given 
an approximate solution.

We will say that an embedding is non-degenerate (and we denote it $K
\in ND(\rho)$) if it is non-degenerate in the sense of Definitions
\ref{ND1} and \ref{ND2}.

\begin{remark} 
As it will become apparent in the proof, the twist condition has a 
very clear geometric meaning, namely that the frequency of 
the quasiperiodic motions changes when we change the initial conditions in 
a direction (conjugate to the tangent to the torus). 

Note that, given 
an invariant torus, we can consider it as an approximate solution 
for similar frequencies and that the twist condition also 
holds. 

Using the a-posteriori theorem shows that under the conditions, we
have many tori with similar frequencies near to the torus.
\end{remark}

\subsubsection{Description of the iterative step}

Once the two non-degeneracy conditions are met for the initial guess
of the modified Newton method, the iterative step  goes as follows:  

\begin{enumerate} 
\item We project the cohomological equations with respect to the
  invariant splitting. 
\item We then solve the equations for the stable and unstable subspaces. 
\item We then solve the equation on the center subspace. This involves small divisor equations. 
We note that solving the equation in the center requires to use the exactness 
so that we can show that the equations are solvable. 
\item To be able to iterate we will need to show that the corrections also satisfy the non-degeneracy conditions (with only some slightly worse quantitative
assumptions). This amounts to showing the stability of the 
spectral non-degeneracy conditions, and developing explicit estimates of the 
changes in the properties given the changes on the embedding. 
\end{enumerate}

\subsection{Statement of the results}
\label{sec:statement} 

\subsubsection{General abstract results} 
\label{sec:statement-general}

The following Theorem~\ref{existence} 
 is the main result of this paper. It provides
the existence of an embedding $K$ for equation \eqref{eq:embedding}
under some non-degeneracy conditions for the initial guess.  We stress
here  that Theorem~\ref{existence} is in an {\sl a posteriori}
format (an approximate solution satisfying 
nondegeneracy conditions implies 
the existence of  a true solution close to it). As already pointed out in the papers
\cite{fontichdelLS07a,fontichdelLS071,FontichLS15}, this format
allows to validate many methods that construct approximate solutions, 
including asymptotic expansions or numerical solutions. 
We also note that it has several automatic consequences 
presented in Section~\ref{sec:consequences}.

\begin{thm}\label{existence}
Suppose assumptions ${\bf H1, H2,H3}$ are met; let $\omega \in \D(\kappa,\nu)$ for some $\kappa>0$ and $\nu\ge \ell-1$. Assume that
\begin{itemize}
\item $K_0$ satisfies the non-degeneracy Conditions
  \ref{ND1} and \ref{ND2} for some $\rho_0 >0$. 

\item
We assume that the range of $K_0$ acting on a complex extension of 
the torus is well inside of $\U$ the domain of analyticity of $\N$ introduced in 
{\bf H2}. More precisely: 

\begin{equation*} 
\dist_X( K_0(D_\rho),  X \setminus \U) \ge r > 0
\end{equation*}   
That is, if $x = K_0(\th)$, 
$\th \in D_{\rho_0}$ and $|| x -y||_{X} \le \rho_0$, then $y \in \U$. 

\end{itemize} 
Define the initial error
\[
E_0=\partial_\omega K_0 - \X\circ K_0
\]
Then there exists a constant $C>0$ depending on $l$, $\nu$,
$\rho_0$, $|\X|_{C^1(B_r)}$, $\|DK_0\|_{\rho_0,X}$,
$\|N_0\|_{\rho_0}$, 
$\|S_0\|_{\rho_0}$, 
(where $S_0$ and $N_0$ are as
in Definition \ref{ND2} replacing $K$ by $K_0$) and the norms of the
projections $\|\Pi^{c,s,u}_{K_0(\th)}\|_{\rho_0,Y,Y}$ such that, if $E_0$ satisfies the estimates
\begin{equation*}
C |\avg (S_0)^{-1}|^2\kappa^4 \delta^{-4\nu} \|E_0\|_{\rho_0,Y} <1
\end{equation*}
and 
\begin{equation*}
C |\avg (S_0)^{-1}|^2 \kappa^2 \delta^{-2\nu} \|E_0\|_{\rho_0,Y} <r,
\end{equation*}
where $0 < \delta \le \min(1,\rho_0/12)$ is fixed, then there exists an
embedding $K_{\infty} \in ND(\rho_{\infty}:=\rho_0-6\delta)$ such that  
\begin{equation}\label{transSolNH2}
\partial_{\omega} K_{\infty}(\th)= \X \circ K_{\infty}(\th)).
\end{equation}
Furthermore, we have the estimate   
\begin{equation}
\label{changestimate} 
\|K_{\infty}-K_0\|_{\rho_{\infty},X} \leq C |\avg (S_0)^{-1}|^2\kappa^{2} \delta^{-2\nu} \|E_0\|_{\rho_0,Y}. 
\end{equation}

The torus $K_\infty$ is also spectraly non degenerate in the sense of
Definition~\ref{ND1} with $\rho$ in Definition~\ref{ND1} replaced 
by $\rho_\infty$ and with  other constants differing
from those of $K_0$ modifying by an amount bounded by 
$C \| E_0 \|_{\rho_0}$.

Furthermore, if we have two solutions $K_1, K_2$ satisfying 
\eqref{eq:embedding} and spectrally nondegenerate in the 
sense of Definition~\ref{ND1} and that  satisfy
\begin{equation} \label{uniquenesshypothesis} 
\|K_1-K_2 \|_{\rho_{\infty},X} \leq C |\avg (S_0)^{-1}|^2 \kappa^{2} \delta^{-2\nu} 
\end{equation} 
Then, there exists $\sigma \in \real^{\ell}$ such that 
\begin{equation} 
\label{uniquenessconclusion} 
K_1(\theta) = K_2(\theta + \sigma)
\end{equation} 
\end{thm}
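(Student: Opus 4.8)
The plan is to follow the standard KAM strategy in an \emph{a-posteriori} format: build a modified Newton iteration for the invariance equation \eqref{eq:embedding}, show that each step improves the error quadratically (in the Nash--Moser sense) at the cost of a fixed loss of analyticity strip $\delta$, and then verify that the iteration can be carried out infinitely often and converges. First I would set up the iterative step: given an approximate solution $K$ with error $E=\partial_\omega K-\X\circ K$, I linearize to obtain the equation \eqref{eq:Newton} for the correction $\Delta$, and project it along the invariant splitting $X=X^s_\th\oplus X^c_\th\oplus X^u_\th$ supplied by Definition~\ref{ND1}. Along the stable and unstable directions the projected equations are solved directly by the Duhamel (variation-of-parameters) integral, using that the rates \eqref{rate1}--\eqref{rate2} make the relevant integrals absolutely convergent \emph{despite} the unbounded nonlinear term, precisely because the smoothing $Y\to X$ with exponent $\alpha_i<1$ compensates for the loss of derivatives in $\N$; this is the content of the hyperbolic part of the argument and is where assumption {\bf H2} (that $\N:X\to Y$) is used together with {\bf SD3}. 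Along the center direction, which is $2\ell$-dimensional by {\bf SD2}, I use the frame $P(\th)=DK(\th)N(\th)$ together with $J_c^{-1}P$ to reduce the center equation — up to an error quadratic in $E$ — to the classical cohomology equations $\partial_\omega a = b$ over the rotation by $\omega$; the solvability obstruction is the vanishing of the average of the right-hand side, which follows from the exactness hypotheses {\bf H3.1}, {\bf H4} via \eqref{vanishing-assumption} (the \emph{vanishing lemma}), and the invertibility of $\avg(S)$ in Definition~\ref{ND2} fixes the remaining $\ell$ free constants (the \emph{automatic reducibility}/twist mechanism). Diophantine estimates \eqref{eq:Diophantine} then give the cohomology solution with the usual tame loss $\kappa\,\delta^{-\nu}$.

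Next I would prove that the Newton step preserves the non-degeneracy hypotheses with only quantitatively-degraded constants. The twist part is easy: $\avg(S)$ is a finite matrix depending continuously on $K$ and on the splitting, so a small enough perturbation keeps it invertible. The spectral part is the genuinely hard point and I would invoke the persistence result Lemma~\ref{iterNH} of Section~\ref{change-nondeg}: the new embedding $K+\Delta$ differs from $K$ by a controlled amount, so $A_{\mathrm{new}}(\th)=D(\X\circ(K+\Delta))(\th)$ differs from $A(\th)$ by a lower-order (unbounded) perturbation, and that lemma produces the new trichotomy with smoothing and gives explicit bounds, linear in $\|E\|$, on the change of the projections and of the cocycle constants. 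This is what makes the scheme closable: the constants $\beta_i$, $C_h$, $\|\Pi^{s,c,u}\|$ drift only by $O(\|E_0\|)$, and since $\sum\|E_n\|$ converges, the limiting constants stay finite. Assembling the quadratic estimate, the stability of the hypotheses, and the usual abstract KAM convergence lemma (patterned on \cite{Moser66a,Zehnder75}) gives $K_\infty\in ND(\rho_\infty)$ with \eqref{transSolNH2} and the bound \eqref{changestimate}; the fact that the operators map real functions to real functions yields that $K_\infty$ is real. The assertion that $K_\infty$ is spectrally non-degenerate with constants within $C\|E_0\|$ of those of $K_0$ is then just the sum of the per-step drifts.

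For the uniqueness statement I would argue as follows. Suppose $K_1,K_2$ both solve \eqref{eq:embedding}, are spectrally non-degenerate, and are close as in \eqref{uniquenesshypothesis}. The key geometric fact, coming from the same automatic-reducibility computation, is that the family $\{K(\cdot+\sigma)\}_{\sigma\in\RR^\ell}$ exhausts, to first order, the kernel of the linearized invariance operator modulo the hyperbolic directions; equivalently, the phase is the only degree of freedom. So I would look for $\sigma$ with $\|K_1(\cdot+\sigma)-K_2\|$ small, write $K_1(\cdot+\sigma)-K_2=\Delta$, and observe that $\Delta$ satisfies the linearized equation \eqref{eq:invariance-linearized} around $K_2$ up to a term quadratic in $\|\Delta\|$; projecting onto the stable/unstable directions and using the smoothing Duhamel representation forces those components to vanish (a bounded solution of an exponentially contracting/expanding equation with small inhomogeneity must be small, and by iterating, zero), while the center component is handled by choosing $\sigma$ to kill the average, exactly as in the existence proof, the twist condition $\avg(S)$ invertible guaranteeing a unique such $\sigma$. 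A contraction-mapping/bootstrap argument in the analytic norm then gives $\Delta\equiv 0$, i.e.\ \eqref{uniquenessconclusion}. The procedure is entirely parallel to the finite-dimensional case of \cite{fontichdelLS07a}, the only new input being the use of the smoothing rates in the hyperbolic projections.

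The step I expect to be the main obstacle is the persistence of the trichotomy-with-smoothing under the unbounded perturbation introduced at each Newton step (Lemma~\ref{iterNH}): one must show not only that the split subspaces move analytically and boundedly, but that the perturbed cocycles retain the smoothing bounds \eqref{rate1}--\eqref{rate3} with exponents $\alpha_i<1$ and only slightly worse constants, since without recovering the smoothing one cannot re-solve the center cohomology equation at the next step. Everything else — the quadratic estimate, the tame bounds, the convergence bookkeeping, and the uniqueness contraction — is, modulo careful accounting of the analyticity-strip losses $6\delta$, a fairly direct adaptation of the finite-dimensional scheme.
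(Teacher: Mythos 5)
Your proposal is correct and follows essentially the same route as the paper: a Nash--Moser iteration for the invariance equation whose linearized step is decomposed along the trichotomy, with the stable/unstable pieces handled by the smoothing Duhamel integral (Lemma~\ref{computational}), the center piece reduced via automatic reducibility and the vanishing lemma to a cohomology equation with tame estimates (Section~\ref{sol-center}), persistence of the splitting-with-smoothing supplied by Lemma~\ref{iterNH}, and uniqueness proved by an iterated translation/contraction argument (Section~\ref{sec:uniqueness}). You also correctly single out the persistence of the smoothing trichotomy under unbounded perturbations as the genuinely new technical point.
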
 

The  statement  that $K_\infty$ satisfies the 
Definition~\ref{ND1} is a consequence of the 
estimates in Section~\ref{change-nondeg}. 

The uniqueness statement will be proved in 
Section~\ref{sec:uniqueness}. It is exactly the same 
as the one in the finite dimensional case in \cite{fontichdelLS071}. 

\subsubsection{Some consequences of the a-posteriori format}
\label{sec:consequences}
The a-posteriori format leads inmediately to several consequences. When we have systems that depend on parameters, observing that the 
solution for a value of the parameter is an approximate solution for 
similar values of the parameters, one obtains \emph{Lipschitz dependence on 
parameters, including the frequency}. 

If one can obtain Lindstedt expansions in the parameters, one can obtain 
Taylor expansions. If the parameter ranges over $\real^n$, this is 
the hypothesis of the converse Taylor theorem \cite{AbrahamR67,Nelson69}
so that one obtains \emph{smooth dependence on parameters}. 
In the case that the parameters range on a closed set, we obtain one 
of the conditions of the Whitney extension theorem. Some general 
treatments are \cite{Vano02,CallejaCL15}.

In many perturbative solutions, one gets that the twist condition is 
small but that the error is much smaller. Note that in the main result,
we presented explictly that the smallness conditions on the error 
are proportional to the square of the twist condition. Hence, we
obtain the \emph{small twist condition}. Note also that the twist condition 
required is not a global condition on the map, but rather a condition 
that is computed on the approximate solution.  Indeed, we will take 
advantage of this feature in the sections on applications. 

The abstract theorem can be applied to several spaces. 
Some spaces of low regularity (e.g. $H^m$) and others with high regularity
(e.g. analytic). The existence results are more powerful in the 
high regularity spaces and the local uniqueness is more 
powerful in the low regularity spaces. 

Given a sufficiently regular solution, one can obtain an analytic 
approximate solution by truncating the Fourier series, which leads to 
an analytic solution, which has to be the original one. Hence, 
one can \emph{bootstrap the regularity}.  See \cite{CallejaL10} for 
an abstract version.

\subsubsection{Results for concrete equations} 
\label{sec:results-particular} 

Consider the following one-dimensional Boussinesq equation subject to periodic boundary conditions, i.e. 

\begin{equation}\label{bou}
u_{tt}= \mu u_{xxxx}+u_{xx}+(u^2)_{xx}
\,\,\,\, x \in \mathbb{T},\,\,t\in \RR. 
\end{equation} 
Looking for solutions of 
the linearization of the form  $u(x, t) = e^{2 \pi i ( k x +  \omega(k) t )}$
we obtain the eigenvalue relation 
\begin{equation}
\label{dispersion} 
\omega^2(k) = - \mu |k|^4 (2 \pi)^2 +  |k|^2 
\end{equation} 
We see that for large $|k|$, $\omega(k) \approx \pm 2i\pi  \mu^{1/2} |k|^2$. 
Hence, the Fourier modes may grow at an exponential rate and 
 the rate is quadratic in the index of the mode. So 
that even analytic functions evolving under the 
linearized equation leave instaneously even spaces of 
distributions.  The non-linear 
term does not restore the well posedness. 
(See Remark~\ref{illposed-nonlinear}.) The previous equation \eqref{bou} is Hamiltonian on $L^2(\TT)$. Indeed, we introduce first the skew-symmetric operator 
$$J^{-1}=\begin{pmatrix} 0  & \partial_x \\ \partial_x & 0 \end{pmatrix}$$

and define 
$$H_{\mu}(u,v)=\int_{0}^1 \frac12 \Big \{ u^2+v^2-\mu (\partial_x u)^2 \Big \}+\frac{1}{3} u^3. $$

Therefore, equation \eqref{bou} writes 
\begin{equation} \label{hamilBou}
\dot{z}=J^{-1} \nabla H_{\mu}(z),\,\,z=(u,v)
\end{equation}
where $\nabla $ has to be understood w.r.t. the inner product in $L^2(\TT)$. Note, however that when $\mu$ is small enough, there are several
values of $k$, which for which $\omega(k)$ is real. We denote by
$\omega^0$ the vector whose components are all the real frequencies
that appear

\begin{equation}\label{omega0bou} 
\begin{split}
&\omega^0 = (\omega(k_1), \omega(k_2), \ldots, \omega(k_\ell));
\\
&\{k_1,\ldots k_\ell\} = \{k \in \ZZ \, | \, k > 0 ; 
- \mu |k|^4 (2 \pi)^2 +  |k|^2 \ge 0 \}
\end{split} 
\end{equation}
We can think of $\omega^0$ as the frequency vector of the motions for 
very small amplitude. 

Note that  the equation \eqref{bou}  conserves
the 
quantity
$\int_0^1 \partial_t u(t,x) dx$ (called the momentum).
Hence $\int_0^1 u(t,x) dx$ (the center of mass)  evolves linearly in 
time. 

We can always change to a system of coordinates in which 
$\int_0^1 \partial_t u(t,x) dx = 0 $. Hence, in this system 
$\int_0^1 u(t,x) dx = cte$. By adding the constant we can assume 
without loss of generality that $\int_0^1 u(t,x) \, dx = 0$. 

Hence we will assume (without loss of generality) 
that 

\begin{equation}\label{automaticsymetry} 
\begin{split} 
& \int_0^1 \partial_t u(t,x) dx = 0 \\
& \int_0^1 u(t,x) dx = 0 \\
\end{split} 
\end{equation}
\begin{remark}\label{equationcount} 
We emphasize that the two parts of \eqref{automaticsymetry} are 
not two independent equations. The first one is just a derivative 
with respect to time of the second. Even if the relation is formal, 
it makes sense when we are dealing with polynomial approximate solutions. 
\end{remark}

We also note that the  equation \eqref{bou} leaves invariant 
the space of functions which are symmetric around $x$ (it does not 
leave invariant the space of functions antisymmetric around $x$). 
Hence, we can consider the equation as defined on the space of 
general functions or in the space of symmetric functions. 
\begin{equation}\label{symmetry} 
u(t,x) = u(t,-x)
\end{equation} 

The main difference between the symmetric and the general case is that 
center space is of different dimension.

We introduce the following Sobolev-type spaces $H^{\rho,m}(\TT)$ for $\rho >0$ and $m \in \NN$ being the space of analytic functions $f$ in $D_\rho$ such that the quantity 

$$\|f\|^2_{\rho,m}=\sum_{k \in \ZZ} |f_k|^2 e^{4\pi \rho |k|}(|k|^{2m}+1)$$
is finite, and where $\left \{ f_k \right \}_{k \in \ZZ}$ are the Fourier coefficients of $f$. Let 
\begin{equation} 
\label{Xbou}
X=H^{\rho, m}(\TT) \times H^{\rho,m-2}(\TT) 
\end{equation}
for $m \geq 2$. 

We state the following conjecture. 
\begin{conj}\label{conj}
Consider a parameter $\mu>0$ in \eqref{bou} such that the center 
space has dimension $2\ell \geq 2$ and fix a Diophantine exponent $\nu >\ell$ , a regularity exponent $m>5/2$ and a positive analyticity radius $\rho_0 $.

Then there exist three explicit functions $a, b_d, b_a : \RR^+ \rightarrow \RR^+$ such that 
$$a(s) \to 0,b_d(s), b_a(s)  \to \infty, \,\, s \to 0$$
 in such 
a way that: for $\ep$ sufficiently small, denote by 
$B_{a(\ep)}(\omega^0) \subset \RR^\ell $   the ball of radius 
$a(\ep)$ around $\omega^0$ and let $\omega \in 
\D(b(\ep),\nu) \cap B_{a(\ep)}(\omega^0)$. 

Then, there  exists $K$, an analytic function from $D_{\rho_0}
\to H^{\rho, m}(\TT) \times H^{\rho,m-2}(\TT) $ solving 
\eqref{eq:embedding} with frequency $\omega$. 

Furthermore, 
\[
\frac{ | \mathcal D(b(\ep),\nu) \cap B_{a(\ep)}(\omega^0)|}
{|  B_{a(\ep)}(\omega^0)|}
 \rightarrow 1
\]

The mapping that given $\omega$ produces $K$ is Lipschitz
when $K$ are given the topology of analytic embeddings 
from $D_{\rho'}$ to $X$ when $\rho' < \rho_0$. 
\end{conj}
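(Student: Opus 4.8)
The plan is to \emph{derive Conjecture~\ref{conj} from the abstract Theorem~\ref{existence}} by fitting \eqref{bou} into the framework of Section~\ref{sec:framework} and producing a good approximate solution. First I would write \eqref{bou} as the first-order system $\dot z=\A z+\N(z)$, $z=(u,v)$, $v=\partial_t u$, with
\[ \A=\begin{pmatrix}0&1\\ \mu\,\partial_x^4+\partial_x^2&0\end{pmatrix},\qquad \N(z)=\bigl(0,\ \partial_x^2(u^2)\bigr), \]
restricted to the zero-mean (and, following the theorem of Section~\ref{applications}, spatially even) subspace isolated by \eqref{automaticsymetry}, on which $\partial_x$ is invertible. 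I would take $X=H^{\rho,m}(\TT)\times H^{\rho,m-2}(\TT)$ as in \eqref{Xbou} and $Y=H^{\rho,m-2}(\TT)\times H^{\rho,m-4}(\TT)$ (leaving room to retune the derivative loss so that the hyperbolic smoothing rates below fall in the admissible ranges). Hypothesis {\bf H1} is immediate from the Fourier description of these spaces; {\bf H2} holds because $\N$ is a fixed quadratic polynomial and $H^{\rho,m}$ is a Banach algebra when $m>1/2$ (ensured by $m>5/2$), so $\N:X\to Y$ is analytic and bounded on balls, and $\N$ is subdominant with respect to $\A$ since it loses only two derivatives and lands in $\{0\}\times H^{\rho,m-2}$. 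For the symplectic hypotheses I would take $\Omega(z_1,z_2)=\langle z_1,Jz_2\rangle_{L^2}$ with $J^{-1}$ the skew operator displayed before \eqref{hamilBou}: on the zero-mean subspace $J$ is well defined, antisymmetric, nondegenerate and bounded from $X$ to $L^2$ (indeed $J^{-1}$ is smoothing, so $\Omega$ is bounded on $X$), giving {\bf H3}; {\bf H3.1} follows since $\Omega$ is a constant two-form; and {\bf H4}, hence the vanishing property \eqref{vanishing-assumption}, holds with $H=H_\mu$ from \eqref{hamilBou} by a direct computation of $\int_0^1\Omega(\X(\gamma(s)),\gamma'(s))\,ds$.

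Next I would build the approximate solution $K_0$ by a Lindstedt expansion in the amplitude $\ep$. When $\N\equiv0$, the linear flow on the $2\ell$-dimensional span of the Fourier modes with $\omega^2(k)>0$ is the rotation of frequency $\omega^0$ from \eqref{omega0bou}; writing $K=\sum_{j\ge0}\ep^jK^{(j)}$ and $\omega=\omega^0+\sum_{j\ge1}\ep^{2j}\omega^{(2j)}$ (or keeping $\omega$ near $\omega^0$ as a free parameter), substituting into \eqref{eq:embedding} and matching powers of $\ep$ leads at each order to linear cohomology equations over the rotation on the center directions and to invertible algebraic equations on the hyperbolic directions; the order-$\ep^2$ solvability obstruction is controlled by the twist matrix \eqref{twistdefined}, whose leading term is an explicit finite-dimensional matrix assembled from the quadratic nonlinearity and the center eigenvectors. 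Truncating the formal series at a high order $M$ gives a trigonometric polynomial $K_0$, analytic on $D_{\rho_0}$, with invariance error $\|E_0\|_{\rho_0,Y}=O(\ep^{M+1})$ (plus an $O(\ep\,|\omega-\omega^0|)$ contribution when $\omega\neq\omega^0$).

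It then remains to check the non-degeneracy conditions for $K_0$ and the smallness inequalities of Theorem~\ref{existence}. For Definition~\ref{ND1}, the splitting is explicit when $\N\equiv0$: $X^c$ is the span of the real-frequency modes; $X^s$ (resp. $X^u$) is the span of the decaying (resp. growing) eigenvectors $(1,\mp\lambda_k)$ of the $2\times2$ blocks for the modes with $\omega^2(k)<0$, where $\lambda_k=\sqrt{\mu(2\pi k)^4-(2\pi k)^2}\sim 2\pi\mu^{1/2}|k|^2$. The rate conditions \eqref{rate1}--\eqref{rate3} hold with $\beta_1=\beta_2=\min_k\lambda_k>0$, $\beta_3^\pm=0$, and smoothing exponents $\alpha_1=\alpha_2$ estimated from $\sup_k|k|^{a}e^{-\lambda_k t}$; here I would exploit the analyticity strip, absorbing the polynomial factor into a sliver $\rho_0-\rho'$ of lost width, to keep $\alpha_i$ strictly below $1$. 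Since $\N$ is subdominant and $K_0$ is $O(\ep)$-close to the center orbit, $A(\theta)=D(\X\circ K_0)(\theta)$ is a small unbounded perturbation of $\A$ of lower order, so Lemma~\ref{iterNH} transfers the whole trichotomy-with-smoothing structure from $\A$ to $K_0$, with constants changed by $O(\ep)$; this yields {\bf ND1}. Definition~\ref{ND2} reduces to the invertibility of $\avg(S_0)$, which is $O(\ep)$-close to the explicit twist matrix from the Lindstedt computation. Finally, with $\|E_0\|_{\rho_0,Y}$ as above, $|\avg(S_0)^{-1}|$ bounded by a fixed power of $\ep^{-1}$, $\kappa=b(\ep)\to\infty$, and $\delta$ fixed, one chooses $M$ large and $a(\ep),b(\ep)$ so that $C|\avg(S_0)^{-1}|^2\kappa^4\delta^{-4\nu}\|E_0\|_{\rho_0,Y}\to0$ together with the second smallness condition; Theorem~\ref{existence} then produces the analytic embedding $K_\infty:D_{\rho_\infty}\to X$ solving \eqref{eq:embedding} with frequency $\omega$, with the bound \eqref{changestimate}. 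The relative-measure statement follows from the standard fact that $\RR^\ell\setminus\D(\kappa,\nu)$ has measure $O(\kappa^{-1})$ on bounded sets, provided $b(\ep)$ is not taken too large compared with $a(\ep)$; the Lipschitz dependence on $\omega$ is one of the automatic consequences of the a posteriori format recorded in Section~\ref{sec:consequences}.

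I expect the main obstacle --- and a plausible reason the statement is phrased as a conjecture --- to be the verification of the spectral non-degeneracy {\bf ND1} with genuinely admissible smoothing exponents $\alpha_1,\alpha_2\in[0,1)$: the hyperbolic rates $\lambda_k$ grow like $|k|^2$, the same order as the derivative loss from $Y$ to $X$ that must be recovered, so the naive bound only gives $\alpha_i=1$. Making the analyticity-strip argument that restores $\alpha_i<1$ precise, while keeping it compatible with the perturbative persistence Lemma~\ref{iterNH} under the unbounded perturbation $D\N(K_0)$, is the delicate point; a secondary difficulty is establishing the invertibility of the explicit twist matrix for every admissible $\mu$, and, in the spatially general (non-even) case, arranging that the center space has exactly dimension $2\ell$ rather than twice that.
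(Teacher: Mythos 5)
You are being asked to prove what the paper explicitly labels a \emph{conjecture}, so a direct comparison against ``the paper's proof'' is not quite possible: the paper does not prove Conjecture~\ref{conj} in general, only the $\ell=1$ case (Theorem~\ref{thBou}), and it says plainly that the only missing ingredient for general $\ell$ is the verification of the twist non-degeneracy (a tedious but in principle elementary determinant computation). Your overall plan --- write \eqref{bou} as a first-order system, verify {\bf H1}--{\bf H4}, construct $K_0$ by Lindstedt series, use Lemma~\ref{iterNH}/Corollary~\ref{cor1:iterNH} to transfer the trichotomy-with-smoothing from the linear part, and feed this into Theorem~\ref{existence} --- is exactly the strategy of Sections~\ref{applications} and \ref{sec:Lindstedt}, and your account of the measure estimate and the Lipschitz dependence on $\omega$ is also in line with Section~\ref{sec:consequences}. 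So the route matches.

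Where you go wrong is in the diagnosis of the actual gap. You identify the ``main obstacle'' as getting admissible smoothing exponents $\alpha_1,\alpha_2\in[0,1)$, on the grounds that the hyperbolic rates $\lambda_k\sim|k|^2$ are of the same order as the derivative loss from your $Y$ to $X$. But this difficulty is entirely an artifact of your own choice $Y=H^{\rho,m-2}\times H^{\rho,m-4}$. In the scalar Boussinesq case the nonlinearity is $(0,(u^2)_{xx})$, and since $u\in H^{\rho,m}$ with $m>5/2$ already gives $(u^2)_{xx}\in H^{\rho,m-2}$, the map $\mathcal N:X\to X$ is analytic; in the system variables $\mathcal N$ is order zero, not order two. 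The paper therefore takes $Y=X$ (see the Proposition stating $\mathcal N$ is analytic from $X_{\rho,m}$ to $X_{\rho,m}$ and the sentence immediately after), and with $Y=X$ the whole smoothing issue is moot --- any $\alpha$, including $\alpha=0$, is admissible, and the lemma quoted from \cite{llave08} gives more than enough. Choosing a strictly larger $Y$ gains you nothing here and introduces a spurious difficulty. The genuine obstruction, which you relegate to ``secondary,'' is the one the paper points at: showing that $\avg(S_0)$ computed from the Lindstedt expansion is invertible for every admissible $\mu$ when $\ell\ge2$. That is precisely why the statement is a conjecture, and why the paper only establishes Theorem~\ref{thBou} (where the explicit order-three computation verifies the twist for $\ell=1$). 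You should reorder your concluding paragraph accordingly, and also note that the nonresonance hypothesis of Lemma~\ref{lindApprox} excludes at most countably many $\mu$ rather than being automatic, which is another small point your write-up glosses over.
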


In Section~\ref{sec:Lindstedt} 
we present a complete proof of the following result. 

\begin{thm}\label{thBou}
Conjecture \ref{conj} is true under the extra assumption that $\ell=1$ which amounts to take $\mu \in [\frac{1}{8\pi},\frac{1}{2\pi})$. 
\end{thm}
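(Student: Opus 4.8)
The plan is to deduce Theorem~\ref{thBou} from the abstract Theorem~\ref{existence} by verifying all of its hypotheses for the Boussinesq equation~\eqref{bou} in the one-dimensional-center case $\mu\in[\tfrac1{8\pi},\tfrac1{2\pi})$, which forces exactly $\ell=1$ real frequency $\omega^0=\omega(1)=\sqrt{|1|^2-\mu(2\pi)^2}$. First I would set up the functional-analytic framework: take $Z=L^2(\TT)$, $X=H^{\rho_0,m}(\TT)\times H^{\rho_0,m-2}(\TT)$ with $m>5/2$, and $Y=H^{\rho_0,m-2}(\TT)\times H^{\rho_0,m-4}(\TT)$, so that the first-order reformulation of~\eqref{bou} (writing $z=(u,v)$, $v=u_t$) has linear part $\A z=(v,\mu u_{xxxx}+u_{xx})$ mapping $X\to Y$ and nonlinear part $\N z=(0,(u^2)_{xx})$ analytic from $X$ to $Y$ by the Banach-algebra property of $H^{\rho_0,s}$ for $s>1/2$ (here I use $m-2>1/2$). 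This checks {\bf H1} and {\bf H2}. For {\bf H3}, {\bf H3.1}, {\bf H4} I would use $J^{-1}=\begin{pmatrix}0&\partial_x\\\partial_x&0\end{pmatrix}$ and the Hamiltonian $H_\mu$ displayed before the statement; boundedness of $\Omega=\langle\cdot,J\cdot\rangle_{L^2}$ on $X$ holds because differentiating once is bounded on these high-regularity spaces, exactness {\bf H3.1} follows since $\Omega$ is constant (pull-back of a closed form on a contractible-in-each-chart torus, or simply $\alpha_K=K^*\alpha$ with $\alpha$ the primitive of the constant form), and {\bf H4} is the fundamental theorem of calculus applied to $H_\mu$ along a path. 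I would also record the reduction~\eqref{automaticsymetry} and restrict to the symmetry class~\eqref{symmetry}, which is where $\ell=1$ enters: only $k=1$ gives a real frequency when $\mu\in[\tfrac1{8\pi},\tfrac1{2\pi})$, so the center space is $2\ell=2$ dimensional.

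Next I would verify the spectral non-degeneracy, Definition~\ref{ND1}, for the \emph{linear} flow $\N\equiv0$, which is explicit in Fourier modes. The dispersion relation~\eqref{dispersion} gives $\omega^2(k)=-\mu(2\pi)^2|k|^4+|k|^2$; for $|k|\ge2$ (in the chosen parameter range) this is negative, so each such mode contributes one genuinely stable and one genuinely unstable eigenvalue $\pm\lambda_k$ with $\lambda_k\sim2\pi\sqrt\mu\,|k|^2\to\infty$, while $k=0$ is excluded by~\eqref{automaticsymetry} and $k=1$ (the center) has purely imaginary $\pm i\omega^0$. The stable/unstable spectral projections $\Pi^{s,u}$ are the Fourier projections onto $\mathrm{span}\{e^{-\lambda_k t}\text{-modes}\}$, $\Pi^c$ projects onto the $k=1$ block; these are bounded on $H^{\rho_0,s}$ uniformly (indeed independent of $\theta$, since $\A$ is autonomous and the cocycle is just $e^{\A t}$ restricted), giving {\bf SD1}. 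The smoothing rates {\bf SD3.2} are the crucial check: on the stable block, $\|U^s(t)\|_{Y\to X}\lesssim\sup_{|k|\ge2}|k|^2 e^{-\lambda_k t}\lesssim t^{-1}e^{-\beta_1 t}$ since $\lambda_k\sim c|k|^2$ dominates the two-derivative loss $X\to Y$, so one gets $\alpha_1=\alpha_2\in(0,1)$ (in fact any $\alpha\ge0$ works, but the statement allows $\alpha_j<1$), and $\beta_1=\beta_2>0$ can be taken as the spectral gap $\lambda_2=2\pi\sqrt\mu\sqrt{16\mu(2\pi)^2\cdot\text{...}}$ while $\beta_3^\pm=0$ on the center block (bounded rotation), so $\beta_1>\beta_3^+$, $\beta_2>\beta_3^-$ holds trivially. {\bf SD2} (the form $J_c$ on $X^c_\theta$ is symplectic and preserved) follows from Remark~\ref{hamiltoniancase} and Lemma~\ref{omegadeg} since $\beta_1=\beta_2$, $\beta_3^+=\beta_3^-$. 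The integral equations {\bf SD3.3} are immediate for $e^{\A t}$. Then, by the main perturbation result of Section~\ref{change-nondeg} (Lemma~\ref{iterNH}), this trichotomy-with-smoothing persists for the approximate solution $K_0$ once $\N\circ K_0$ — equivalently the amplitude $\varepsilon$ — is small enough, since $\N$ is of lower order ($-2$ derivatives in the second slot, vs.\ $+4$ for $\A$), so Definition~\ref{ND1} holds for $K_0$.

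Then I would construct the approximate solution $K_0$ and verify the twist condition, Definition~\ref{ND2}. This is done by a Lindstedt-series expansion in the amplitude $\varepsilon$ around the equilibrium: write $K(\theta)=\varepsilon K^{(1)}(\theta)+\varepsilon^2 K^{(2)}(\theta)+\cdots$ with $K^{(1)}(\theta)=(\cos2\pi\theta,-\omega^0\sin2\pi\theta)$ times normalization (the $k=1$ eigenmode), and $\omega=\omega^0+\varepsilon^2\omega^{(2)}+\cdots$; the invariance equation~\eqref{eq:embedding} at each order in $\varepsilon$ is a cohomology equation over the rotation whose solvability fixes the frequency correction — this is the content of Section~\ref{sec:Lindstedt}. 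Truncating at a suitable finite order $M$ gives an analytic $K_0:D_{\rho_0}\to X$ (a trigonometric polynomial, so automatically analytic with values in the high-regularity $X$, and with $\N\circ K_0\in Y$) with error $E_0=\partial_\omega K_0-\X\circ K_0=O(\varepsilon^{M+1})$ in $\|\cdot\|_{\rho_0,Y}$. The twist matrix $S_0$ from~\eqref{twistdefined} is, at leading order, a nonzero scalar (this is where $m>5/2$ and the non-resonance of the quadratic nonlinearity against the $k=1$ mode matter — essentially the classical computation that the Boussinesq normal-form coefficient is nonzero); I would compute $\mathrm{avg}(S_0)=c_0\varepsilon^{?}+O(\varepsilon^{?+1})$ with $c_0\ne0$, so $|\mathrm{avg}(S_0)^{-1}|$ grows only polynomially in $1/\varepsilon$ — this is exactly the \emph{small twist} situation the a-posteriori format is built to handle. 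The Diophantine functions $a,b_d,b_a$ and the measure statement come from choosing $\varepsilon\to0$, the excluded-frequency estimate being the standard one for $\D(\kappa,\nu)$, $\nu>\ell=1$; since the smallness conditions in Theorem~\ref{existence} read $C|\mathrm{avg}(S_0)^{-1}|^2\kappa^4\delta^{-4\nu}\|E_0\|_{\rho_0,Y}<1$ and the analogous one with $<r$, and $\|E_0\|=O(\varepsilon^{M+1})$ beats any fixed polynomial power of $|\mathrm{avg}(S_0)^{-1}|\sim\varepsilon^{-p}$ by taking $M$ large, both are satisfied for $\varepsilon$ small along a full-measure set of $\omega$ near $\omega^0$. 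Applying Theorem~\ref{existence} then yields $K_\infty\in ND(\rho_\infty)$ solving~\eqref{eq:embedding}, with the Lipschitz dependence on $\omega$ coming from the a-posteriori format as in Section~\ref{sec:consequences}, and the small-amplitude and evenness properties inherited from $K_0$. The main obstacle I anticipate is the careful verification of {\bf SD3.2} — establishing the smoothing rates $t^{-\alpha}e^{-\beta t}$ for the perturbed stable/unstable cocycles with the \emph{uniform} constants and analyticity in $\theta$ demanded by Definition~\ref{ND1} — but this is precisely what Lemma~\ref{iterNH} of Section~\ref{change-nondeg} is designed to supply, so the real work here is the bookkeeping that reduces the concrete equation to its hypotheses, together with the nonvanishing of the leading twist coefficient.
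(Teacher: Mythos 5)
Your high-level plan coincides with the paper's: cast \eqref{bou} as a first-order system, verify the abstract hypotheses of Theorem~\ref{existence}, establish the splitting for the linear flow and propagate it via Lemma~\ref{iterNH}, build $K_0$ by a Lindstedt expansion, and check the twist condition. However, two points need attention.

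First, your choice $Y=H^{\rho_0,m-2}\times H^{\rho_0,m-4}$ (a two-derivative drop in each slot) is the wrong one and in fact undermines \textbf{SD3.2}. With that $Y$, the natural estimate you write, $\|U^s(t)\|_{Y\to X}\lesssim\sup_{|k|\ge2}|k|^2e^{-\lambda_k t}$ with $\lambda_k\sim c|k|^2$, gives $t^{-1}e^{-\beta t}$, i.e.\ $\alpha_1=1$, which is \emph{not} in $[0,1)$ as Definition~\ref{ND1} requires; the parenthetical ``in fact any $\alpha\ge0$ works'' does not repair this, since $t^{-1}$ is not dominated by $Ct^{-\alpha}$ for $\alpha<1$ near $t=0$. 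The paper sidesteps the issue entirely by observing that the nonlinearity $\N(z)=(0,(u^2)_{xx})$ already maps $X_{\rho,m}=H^{\rho,m}\times H^{\rho,m-2}$ into itself (since $(u^2)_{xx}\in H^{\rho,m-2}$ by the algebra property of $H^{\rho,m}$), so one may take $Y=X$ and $\alpha_1=0$. It is \emph{not} required that $\A$ map $X\to Y$ (the paper flags this explicitly); only $\N:X\to Y$ and smoothing of the cocycle are needed. If you insist on a genuine two-space gap, $Y=H^{\rho,m-1}\times H^{\rho,m-3}$ would give $\alpha_1=1/2$, which is admissible.

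Second, the nonvanishing of the twist coefficient is exactly the content that makes Theorem~\ref{thBou} a theorem rather than a conjecture, and it cannot be left as ``I would compute $c_0\neq0$.'' The paper carries the Lindstedt expansion to third order: at order $2$ the resonant projection vanishes so $\omega^1=0$, and at order $3$ one finds $\omega^2\propto A^4\bigl(\tfrac14F(2,2)^{-1}-\tfrac12F(0,2)^{-1}\bigr)$, then verifies the nonvanishing by checking $F(0,2)-2F(2,2)=-12-8\mu\neq0$. This finite, explicit check is the whole reason the result is stated only for $\ell=1$: for $\ell>1$ the analogous determinant has not been verified, which is why Conjecture~\ref{conj} remains open there. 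Your sketch correctly identifies everything else but omits the one step that is genuinely not routine.
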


Informally, following the standard Lindstedt procedure, 
for $\ep$ small we find families of approximate 
solutions up to an error which is smaller than an 
arbitrarily large  power of $\ep$. 

We can also verify that the non-degeneracy assumptions hold 
with a condition number which is a fixed power of $\ep$.  
If $\ep$ is very small one can allow frequencies with a large 
Diophantine constant, 
and obtain that the functions are analytic in a very large
domain.  
As we will see in the proof, we can take the functions
$a, b_d, b_a$ to be just powers. 

The first step of constructing very approximate solutions 
is accomplished for all values of $\mu$ as in Conjecture~\ref{conj}. 

To verify the non-degeneracy conditions, it suffices to compute 
the determinant of an explicit matrix and checking it is not zero. 
This is the only step we are missing to verify Conjecture~\ref{conj}. 
This calculation is, not very hard, but it is tedious. Of course, 
there may be insights that make it possible to verify it. In the present paper, we will concentrate on $\ell=1$ to check this condition.

\begin{remark} 
We expect that Theorem~\ref{thBou} can be greatly expanded 
(a wider range of parameters, removing  the symmetry conditions)
by just performing longer calculations using the Lindstedt method.
We hope to come back to this problem in future work. 
\end{remark} 

\begin{remark}
Note that the case $\ell = 1$, amounts to periodic orbits so that there
are no small denominators. In this case, one can use simpler 
fixed point theorems. There are already numerical computer
assisted proofs in this case \cite{CastelliGL15,FiguerasGLL15}. 
\end{remark}

 Similar results will be proved for
other equations such as the Boussinesq system of water waves (see
Section \ref{applications2}). The system under consideration is

\begin{equation}\label{systemBou}
\partial_t    
\begin{pmatrix} 
u  \\
v \\
\end{pmatrix}
=
\begin{pmatrix} 
0 & -\partial_x -\mu \partial_{xxx}  \\
-\partial_x & 0 \\
\end{pmatrix}
\begin{pmatrix} 
u \\
v \\
\end{pmatrix}
+
\begin{pmatrix} 
\partial_x (uv) \\
0 \\
\end{pmatrix}
\end{equation}
where $t>0$ and $x \in \TT.$ System \eqref{systemBou} 
has a Hamiltonian structure given by:

$$J^{-1}=\begin{pmatrix} 0  & \partial_x \\ \partial_x & 0 \end{pmatrix}$$

and 
$$H_{\mu}(u,v)=\int_{0}^1 \frac12 \Big \{ u^2+v^2-\mu (\partial_x v)^2 \Big \}+\int_{0}^1 uv^2. $$

In this case, one has to take 
$$X=H^{\rho,m}(\TT) \times H^{\rho, m+1}(\TT)$$
and 
$$Y=H^{\rho,m-1}(\TT) \times H^{\rho, m}(\TT). $$

The elementary linear analysis around the $(0,0)$ equilibrium has been performed in \cite{llave08}. The dispersion relation is given by
\begin{equation}  
\omega(k)  = \pm |k| 2 \pi i
\sqrt{ 1 - 4 \pi^2 \mu k^2} \quad 
k \in \integer 
\end{equation} 
We take the principal determination of 
the square root .
We denote by $\omega^0$ the vector whose components are
all the real frequencies that appear
\begin{equation}\label{omega0first} 
\begin{split}
&\omega^0 = (\omega(k_1), \omega(k_2), \ldots, \omega(k_\ell));
\\
&\{k_1,\ldots k_\ell\} = \{k \in \ZZ \, | \, k > 0 ; 
  1 - 4 \pi^2 \mu k^2 \ge 0 \}
\end{split} 
\end{equation}

Similarly to the Boussinesq equation, we state
\begin{conj}\label{thBouWW}
Fix a Diophantine exponent $\nu >\ell$ and a regularity exponent $m$ large. Then there exist three explicit functions $a, b_d, b_a : \RR^+ \rightarrow \RR^+$ such that 

$$a(s) \to 0,b_d(s), b_a(s)  \to \infty, \,\, s \to 0$$
 in such 
a way that: for $\ep$ sufficiently small, denote by 
$B_{a(\ep)}(\omega^0) \subset \RR^\ell $   the ball of radius 
$a(\ep)$ around $\omega^0$ and let $\omega \in 
\D(b(\ep),\nu) \cap B_{a(\ep)}(\omega^0)$.

Then, there  exists $K \in X$ solving 
\eqref{eq:embedding} with frequency $\omega$, the parametrization of the Boussinesq system for water waves \eqref{systemBouTemp}. 

Furthermore, 
\[
\frac{ | D_h(b(\ep),\nu) \cap B_{a(\ep)}(\omega^0)|}
{|  B_{a(\ep)}(\omega^0)|}
 \rightarrow 1
\]
\end{conj}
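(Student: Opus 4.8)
The plan is to establish Conjecture~\ref{thBouWW} in the case $\ell=1$ --- i.e.\ for those $\mu>0$ for which exactly one pair of purely imaginary frequencies occurs --- by the same route as the proof of Theorem~\ref{thBou}: (i) put \eqref{systemBou} into the abstract framework of Section~\ref{sec:framework}; (ii) build a highly accurate family of approximate solutions by the Lindstedt method; (iii) verify the non-degeneracy Conditions~\ref{ND1} and \ref{ND2} on the approximate solution with condition numbers only polynomially small in the amplitude; (iv) invoke Theorem~\ref{existence}. For step (i), write the right-hand side of \eqref{systemBou} as $\A z+\N(z)$ with $\A$ the constant skew-symmetric differential operator and $\N(u,v)=(\partial_x(uv),0)$; with $Z=L^2(\TT)\times L^2(\TT)$, $X=H^{\rho,m}(\TT)\times H^{\rho,m+1}(\TT)$ and $Y=H^{\rho,m-1}(\TT)\times H^{\rho,m}(\TT)$ ($m$ large), {\bf H1} and {\bf H2} are immediate since $\N$ is a fixed polynomial losing one space derivative, hence analytic from $X$ to $Y$; {\bf H3}, {\bf H3.1} and {\bf H4} come from the bounded-on-$X$ pairing attached to $J^{-1}$ and the Hamiltonian $H_\mu$ recorded above (cf.\ \cite{llave08}). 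The spectral picture required in Definition~\ref{ND1} for the \emph{constant} operator $\A$ is read off the dispersion relation $\omega(k)=\pm|k|2\pi i\sqrt{1-4\pi^2\mu k^2}$: for $|k|$ large the two branches produce eigenvalues whose real part grows quadratically in $|k|$, giving stable and unstable subspaces whose evolutions decay exponentially and gain two derivatives (so {\bf SD3.2} holds with $\alpha_1,\alpha_2$ as small as we please once $m$ is taken large relative to the order-one loss of $\N$), while the finitely many $k$ with $1-4\pi^2\mu k^2\ge0$ give the $2\ell$-dimensional, purely oscillatory center; we work throughout in the subspace of spatially even functions so that this dimension is exactly $2\ell$ and the momentum degeneracy is removed.

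For step (ii), fix $\mu$ in the $\ell=1$ range, let $\omega^0$ be the unique real frequency and seek formal series $K_{\le n}(\th;\ep)=\sum_{j=1}^{n}\ep^jK_j(\th)$, $\omega(\ep)=\omega^0+\sum_{j\ge1}\ep^j\omega_j$ solving \eqref{eq:embedding} to order $\ep^n$. At each order one inverts $\partial_{\omega}-\A$ on trigonometric polynomials; this operator is diagonal in the double Fourier basis, only boundedly many modes enter at order $j$, and the sole genuine center resonance is removed by choosing $\omega_j$ via the vanishing lemma coming from {\bf H4}, exactly as in \cite{fontichdelLS07a}. Since $\ell=1$ every nonzero frequency is Diophantine, so there are truly no small divisors in this step, and it yields, for any prescribed $N$, a trigonometric-polynomial embedding $K_0:=K_{\le N}(\cdot;\ep)$ lying well inside $\U$ and satisfying \eqref{eq:embedding-approx} with $\|E_0\|_{\rho_0,Y}=O(\ep^{N+1})$.

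For step (iii), the spectral non-degeneracy of $A(\th)=D(\X\circ K_0)(\th)$ is obtained by transferring the trichotomy-with-smoothing of $\A$ through the perturbative result of Section~\ref{change-nondeg}: the perturbation $D\N(K_0(\th))$ is unbounded but of lower order than $\A$ and of size $O(\ep)$, which the smoothing absorbs; this also supplies the bounds on $\|\Pi^{c,s,u}_{K_0(\th)}\|_{\rho_0,Y,Y}$. The twist condition requires the invertibility of $\avg(S_0)$ with $S_0$ the explicit matrix \eqref{twistdefined} built from $K_0$, $DK_0$, $J_c$ and $A$; expanding in $\ep$ one finds $\avg(S_0)=\ep^{p}\,S_*+O(\ep^{p+1})$ for an explicit scalar $S_*$ (a scalar because $\ell=1$) computed from the first nontrivial Lindstedt coefficients, and the whole step reduces to checking $S_*\ne0$. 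Granting this, and since $|\avg(S_0)^{-1}|^2=O(\ep^{-2p})$ while $\|E_0\|_{\rho_0,Y}=O(\ep^{N+1})$, for $N$ chosen large the smallness hypotheses $C|\avg(S_0)^{-1}|^2\kappa^4\delta^{-4\nu}\|E_0\|_{\rho_0,Y}<1$ and $C|\avg(S_0)^{-1}|^2\kappa^2\delta^{-2\nu}\|E_0\|_{\rho_0,Y}<r$ of Theorem~\ref{existence} hold for $\ep$ small, even allowing $\kappa=b_d(\ep)$ to grow like a fixed power of $\ep^{-1}$ and the ball radius $a(\ep)$ to be a fixed power of $\ep$. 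Theorem~\ref{existence} then gives $K_\infty\in ND(\rho_\infty)$ solving \eqref{transSolNH2}; the measure statement is trivial for $\ell=1$ (and would follow in general from $|\D(\kappa,\nu)\cap B|/|B|\to1$ as $\kappa\to\infty$ for $\nu>\ell$), and the Lipschitz dependence on $\omega$ is the automatic a-posteriori consequence from Section~\ref{sec:consequences}.

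The hard part is the last item of step (iii): the explicit verification that $S_*\ne0$. All the structural hypotheses are easy here because $\A$ is constant-coefficient and $\N$ is a single monomial, and the persistence of the trichotomy and the solution of the cohomology equations are handled by the abstract machinery; but the twist matrix involves the center projection of $K_0$, the restriction $J_c$ of the symplectic operator, and $A$ evaluated on the Lindstedt approximant, so computing its leading term is a lengthy (if elementary) manipulation of Fourier coefficients --- precisely the computation already missing for the scalar equation at general $\ell$ --- that one would have to carry out, and extend to $\ell\ge2$, to upgrade Conjecture~\ref{thBouWW} to a theorem in full. A secondary point to watch is the bookkeeping of the smoothing exponents: the quadratic-in-$|k|$ growth of the hyperbolic spectrum gives a two-derivative gain, which must dominate the one-derivative loss of $\N$ uniformly along the iteration, and this is what forces the index pattern $m$, $m+1$, $m-1$ in the choice of $X$ and $Y$ and the requirement that $m$ be large.
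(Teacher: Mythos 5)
Your proposal follows the same route the paper takes for Theorem~\ref{thBouSystem} (the $\ell=1$ case of this conjecture): set up the two-space framework with $X=H^{\rho,m}\times H^{\rho,m+1}$, $Y=H^{\rho,m-1}\times H^{\rho,m}$, build a trigonometric-polynomial Lindstedt approximant $K_0$ with error $O(\ep^{N+1})$, transfer the trichotomy of the constant-coefficient operator to $K_0$ via Section~\ref{change-nondeg}, verify the twist condition, and invoke Theorem~\ref{existence}. The framing, the choice of spaces and symmetries, and the balancing of $\|E_0\|=O(\ep^{N+1})$ against $|\avg(S_0)^{-1}|^2 = O(\ep^{-2p})$ are all consistent with the paper's argument.

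However, there is a genuine gap: you explicitly leave the twist verification --- the statement $S_*\ne 0$, equivalently that the first nontrivial frequency correction $\omega^2$ depends nondegenerately on the amplitude --- as a computation ``one would have to carry out.'' That computation is precisely the nontrivial content of the proof; all the structural steps (analyticity of $\N$, smoothing of $U^{s,u}_\th$, persistence of the splitting, solvability of the cohomology equation) are soft consequences of the abstract machinery in Sections~\ref{change-nondeg} and~\ref{sol-center}, and are indeed verified ``for all $\ell$,'' so the reason the paper restricts to $\ell=1$ is exactly that the twist check is only done there. The paper carries it out concretely: it computes $\mathcal W_1$, observes that $\partial_x(\mathcal U_1\mathcal V_1)$ is not in the range of $\mathcal M_0$ so $\omega^1=0$, computes $\mathcal W_2$ with multipliers $F(\pm2,2)$, and at order $3$ extracts the coefficient of $\sin(2\pi\th)\cos(2\pi x)$ in $\partial_x(\mathcal U_1\mathcal V_2)+\partial_x(\mathcal U_2\mathcal V_1)$, namely a rational function of $\mu$ through $F(\pm2,2)$ and $\omega^0$, and checks that it vanishes only for finitely many $\mu$; this yields $\omega^2\ne0$ and hence the twist condition. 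Without that explicit Fourier computation (or an equivalent one) your argument is only a conditional reduction to the abstract theorem, not a proof of the $\ell=1$ statement, and a fortiori does not establish the conjecture as stated.

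One smaller point worth flagging: your phrase ``every nonzero frequency is Diophantine when $\ell=1$'' is correct (so the measure claim is indeed trivial there), but it does not by itself let you take $\kappa = b_d(\ep)\to\infty$ and still satisfy the smallness hypotheses of Theorem~\ref{existence} unless you also track how the $\kappa^4\delta^{-4\nu}$ factor competes with $\ep^{N+1}$; you gesture at this, and it is correct, but the explicit choice of $a(\ep),b_d(\ep)$ as fixed powers of $\ep$ depends on the exponent $p$ from the twist computation you have not carried out.
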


\begin{thm}\label{thBouSystem}
Conjecture \eqref{thBouWW} is true provided that $\ell=1$, i.e. $\mu \in [\frac{1}{8\pi},\frac{1}{2\pi})$. 
\end{thm}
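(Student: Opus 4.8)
The plan is to deduce Theorem~\ref{thBouSystem} from the abstract Theorem~\ref{existence} exactly along the lines of the proof of Theorem~\ref{thBou}, only changing the linear dispersion analysis and the bookkeeping for the nonlinearity $\partial_x(uv)$. First I would verify that the Boussinesq system \eqref{systemBou} fits the abstract framework: take $Z = L^2(\TT)\times L^2(\TT)$, $X = H^{\rho,m}(\TT)\times H^{\rho,m+1}(\TT)$, $Y = H^{\rho,m-1}(\TT)\times H^{\rho,m}(\TT)$ for $m$ large, and check {\bf H1}--{\bf H4}. Hypothesis {\bf H1} is the continuous dense embedding $X\hookrightarrow Y$, which is immediate from the definition of the weighted spaces. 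For {\bf H2}, the nonlinearity $\N(u,v) = (\partial_x(uv),0)$ is a bounded bilinear map (hence analytic) from $X$ to $Y$: multiplication of two $H^{\rho,m+1}$-type functions, followed by one $x$-derivative, lands in $H^{\rho,m}$ provided $m$ is large enough that $H^{\rho,m}$ is a Banach algebra (a Sobolev-type algebra estimate on the strip $D_\rho$ does the job). The symplectic form {\bf H3} is $\Omega(w_1,w_2) = \langle w_1, J w_2\rangle_{L^2}$ with $J^{-1} = \left(\begin{smallmatrix} 0 & \partial_x \\ \partial_x & 0\end{smallmatrix}\right)$; it is bounded on $X$ because each component carries enough derivatives, antisymmetric, and non-degenerate on the mean-zero subspace, and exactness {\bf H3.1} follows from constancy of $\Omega$ exactly as in the remarks after {\bf H3}. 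Hypothesis {\bf H4} holds with $H_\mu$ as written, by the fundamental theorem of calculus along paths.

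Next I would establish the spectral non-degeneracy (Definition~\ref{ND1}) for the linearization at the origin, and hence, by the perturbation result Lemma~\ref{iterNH}, for the Lindstedt approximate solution. The dispersion relation $\omega(k) = \pm 2\pi i |k|\sqrt{1 - 4\pi^2\mu k^2}$ shows that for $|k|$ large the square root becomes imaginary, so $\omega(k)$ is real with $|\omega(k)| \sim 4\pi^2 \mu^{1/2} |k|^2$: the large modes split into a genuinely exponentially contracting family (forward in time, on the stable subspace spanned by one branch) and an exponentially expanding family, with quadratic-in-$k$ rates. Because the growth rate is quadratic while the loss from $\N$ is only one derivative, the evolution operators $U^{s,u}_\theta(t)$ are smoothing from $Y$ to $X$ with the rates \eqref{rate1}--\eqref{rate2}, $\alpha_i\in[0,1)$; the center space $X^c_\theta$ is the span of the finitely many modes with $1 - 4\pi^2\mu k^2 \ge 0$ together with the zero mode, which for $\mu \in [\tfrac{1}{8\pi},\tfrac{1}{2\pi})$ gives precisely $k=\pm 1$, i.e. $\dim X^c_\theta = 2\ell = 2$. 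Since the equation is Hamiltonian, Remark~\ref{hamiltoniancase} gives the symmetry of exponents and the induced symplectic form on the center, so {\bf SD1}--{\bf SD3} hold with $\N\equiv 0$, and then Lemma~\ref{iterNH} transfers them to the approximate solution.

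The construction of the approximate solution $K_0$ is the Lindstedt expansion in the small amplitude parameter $\ep$ around the linear quasi-periodic orbit associated with $\omega^0$, carried out in Section~\ref{sec:Lindstedt}. One writes $K = K^{(0)} + \ep K^{(1)} + \cdots$, solves order by order the cohomology equations over the rotation by $\omega$ (solvable because of the Diophantine condition $\omega\in\D(b_d(\ep),\nu)$ and the vanishing lemma), and truncates at a sufficiently high order $M$ so that the residual error $E_0$ in \eqref{eq:embedding-approx} is $O(\ep^{M+1})$ in $\|\cdot\|_{\rho_0,Y}$, while the non-degeneracy quantities $\|\avg(S_0)^{-1}\|$, the projection norms, etc., grow at most like fixed negative powers of $\ep$. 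The functions $a(\ep)$ (size of the allowed frequency ball), $b_d(\ep)$ (allowed Diophantine constant) are then powers of $\ep$ chosen so that the two smallness conditions $C|\avg(S_0)^{-1}|^2\kappa^4\delta^{-4\nu}\|E_0\|_{\rho_0,Y}<1$ and $C|\avg(S_0)^{-1}|^2\kappa^2\delta^{-2\nu}\|E_0\|_{\rho_0,Y}<r$ of Theorem~\ref{existence} are met for $\ep$ small; the measure statement $|\D(b_d(\ep),\nu)\cap B_{a(\ep)}(\omega^0)|/|B_{a(\ep)}(\omega^0)|\to 1$ is the standard count of Diophantine vectors in a ball once $a(\ep)$ dominates $b_d(\ep)^{-1/(\nu+1-\ell)}$. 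Applying Theorem~\ref{existence} then yields $K_\infty\in X$ solving \eqref{eq:embedding}, and the Lipschitz dependence on $\omega$ comes from the a-posteriori format (Section~\ref{sec:consequences}).

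The main obstacle, exactly as flagged in the text, is the verification of the twist condition, i.e. that $\avg(S_0)$ with $S(\th)$ defined by \eqref{twistdefined} is invertible. For $\ell = 1$ this is a single scalar: one must compute $S$ explicitly from the leading Lindstedt data — $DK\,N$, the center projection of $A = D(\X\circ K_0)$, and $J_c^{-1}$ restricted to the two-dimensional center — and check the resulting number is nonzero to leading order in $\ep$. The computation is "not very hard but tedious": it reduces to evaluating a $2\times 2$ symplectic-geometry expression on the $k=\pm1$ Fourier modes, using the specific form of the nonlinearity $\partial_x(uv)$ and the operator $J^{-1} = \left(\begin{smallmatrix} 0 & \partial_x \\ \partial_x & 0\end{smallmatrix}\right)$; the restriction to $\ell=1$ is precisely what keeps this finite calculation manageable, and it is the only place where the proof is not a verbatim transcription of the abstract machinery. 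Once that determinant is shown nonzero, all hypotheses of Theorem~\ref{existence} are in force and the conclusion of Theorem~\ref{thBouSystem} follows.
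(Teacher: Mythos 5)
There is a genuine gap: the proposal stops exactly at the one step that constitutes the actual content of the theorem. You correctly identify that the only non-formal part of the argument is verifying the twist condition for $\ell = 1$, and you even note that this is ``the only place where the proof is not a verbatim transcription of the abstract machinery.'' But you then write ``one must compute $S$ explicitly\ldots and check the resulting number is nonzero to leading order in $\ep$. \ldots Once that determinant is shown nonzero, all hypotheses of Theorem~\ref{existence} are in force.'' That conditional is not a proof --- it is a statement of the remaining obstacle. The paper's proof of Theorem~\ref{thBouSystem} \emph{is} precisely that calculation: it carries the Lindstedt expansion for the system out to order~$3$, computes $\mathcal{U}_1\mathcal{V}_2 + \mathcal{U}_2\mathcal{V}_1$, extracts the coefficient of $\sin(2\pi\theta)\cos(2\pi x)$, and shows it is nonzero (for all but finitely many $\mu$ in the interval), which forces $\omega^2 \neq 0$ and hence the twist. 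Without exhibiting that the coefficient $\tfrac{\pi}{4}\bigl(\tfrac{1}{F(-2,2)} - \tfrac{\omega^0}{F(2,2)} + \tfrac{1}{F(-2,2)} - \tfrac{1}{F(2,2)}\bigr)$ (or an equivalent quantity) does not vanish, the hypotheses of Theorem~\ref{existence} are simply not verified and the conclusion does not follow.

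There is also a secondary mismatch in method worth noting. You propose to check the twist by directly computing the matrix $S(\theta)$ from the definition \eqref{twistdefined}; the paper instead uses the reformulation \eqref{twistinalpha} introduced in the Boussinesq-equation section, namely that (in the amplitude coordinate $A$) the twist condition is the non-degeneracy of $\partial_{A}\omega^{N}_\ep$, which reduces the problem to showing that some $\omega^{k}$ with $k \ge 1$ is nonzero and depends on $A$. That route is what lets the paper get away with a short order-$3$ Lindstedt calculation rather than a direct assembly of \eqref{twistdefined}; your route would work too in principle but is harder to execute, and in any case is not executed. A small additional slip: you describe the center space as ``the finitely many modes with $1-4\pi^2\mu k^2 \ge 0$ together with the zero mode,'' but the zero mode is excluded by the mean-zero normalization in $X_0$; the correct count is $2\ell = 2$ coming from the $k = \pm 1$ pair alone, as you in fact state a moment later.
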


\section{The linearized invariance equation}
\label{sec:linearized}

The crucial ingredient of the Newton method is to solve the linearized
operator around an embedding $K$. This is motivated because one can hope to
improve the solution of \eqref{eq:evolutionary-formal}.  Notice the appearence of
the linearized evolution does not have a dynamical motivation. The
linearized equation does not appear as measuring the change of the
evolution with respect to the initial conditions, it appears as the
linearization of \eqref{eq:evolutionary-formal}.

Let us denote 
\begin{equation}\label{operatorinvariance} 
\mathcal F_\omega(K)=\partial_\omega K- \X \circ K.
\end{equation}

Clearly, the invariance equation \eqref{eq:embedding} can be 
written concisely as $\mathcal F_\omega(K) = 0$.

We prove the following result. 
\begin{lemma}\label{main}
Consider the linearized equation
\begin{equation}\label{lin-eq}
D\mathcal{F}_\omega(K)\Delta =-E. 
\end{equation}
Then there exists a  constant $C$ that depends on $\nu$, $l$,
$\|DK\|_{\rho,X}$,  $\|N\|_\rho$, $\|\Pi^{s,c,u}_{\th}\|_{\rho,Y,Y}$, $|(\avg(S))^{-1}|$
and the hyperbolicity
constants such that assuming 
that  $\delta \in (0, \rho/2)$ satisfies
\begin{equation} \label{smallness1} 
C \kappa \delta^{-(\nu + 1)} \| E\|_{\rho,Y} < 1
\end{equation} 
we have
\begin{itemize}
\item[A] There exists an approximate solution $\Delta $
of \eqref{lin-eq},  in the
following sense: there exits a function $\tilde{E}(\th)$ such that
$\Delta $ solves exactly
\begin{equation}\label{approximatesol} 
\begin{split} 
& D_{K}\mathcal{F}_\omega(K)\Delta=-E+\tilde{E}\\
\end{split} 
\end{equation}
with the following estimates: for all $\delta \in (0,\rho/2)$  
\begin{equation}\label{improve1}
\begin{split}
&\|\tilde{E}\|_{\rho-\delta,Y} \leq C \kappa^2 \delta^{-(2\nu+1)} 
\|E\|_{\rho} \|\mathcal{F}_\omega(K)\|_{\rho,Y}\\
& \|\Delta\|_{\rho-2\delta,X} \leq C \kappa ^2 \delta ^{-2\nu}
\|E\|_{\rho,Y},  \\
& \|D \Delta\|_{\rho-2\delta,X} \leq C \kappa ^2 \delta ^{-2\nu -1}
\|E\|_{\rho,Y},  
\end{split} 
\end{equation}
\item[B]
 If $\Delta_1$ and $\Delta_2$ are approximate solutions of 
the  linearized equation
\eqref{lin-eq}
  in the 
sense  of \eqref{approximatesol},  then there exists $\alpha \in
  \mathbb{R}^\ell$ such that for all $\delta \in (0,\rho)$
\begin{equation}\label{estimAvg}
\|\Delta_1- \Delta_2-DK(\th)\alpha\|_{\rho-\delta,X} \leq C \kappa
^2 \delta ^{-(2\nu+1)} \|E\|_{\rho,Y}
\|\mathcal{F}_\omega(K)\|_{\rho,Y}. 
\end{equation}
\end{itemize}

\end{lemma}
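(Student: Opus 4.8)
The plan is to mimic the finite-dimensional scheme of \cite{fontichdelLS07a,fontichdelLS071}, adapting it to the two-space framework, and the key organizing principle is to decompose the linearized equation \eqref{lin-eq} according to the invariant splitting \eqref{splitting} provided by the spectral non-degeneracy (Definition~\ref{ND1}). Writing $\Delta = \Delta^s + \Delta^c + \Delta^u$ with $\Delta^{s,c,u}(\th) = \Pi^{s,c,u}_\th \Delta(\th)$, and noting that $D\mathcal F_\omega(K)\Delta = \partial_\omega \Delta - A(\th)\Delta$ where $A(\th) = D(\X\circ K)(\th)$ maps $X$ into $Y$, the equation \eqref{lin-eq} splits (after projecting with $\Pi^{s,c,u}$ and using that the projections are analytic in $\th$, so $\partial_\omega$ of a section of $X^{s,c,u}$ differs from a section of the same bundle by a bounded term already controlled by {\bf SD1}) into three scalar-in-bundle cohomological equations along the stable, center, and unstable directions, with source terms $-\Pi^{s,c,u}_\th E$. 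First I would treat the hyperbolic pieces: the equations $\partial_\omega \Delta^{s} - A\Delta^{s} = -E^{s}$ (and symmetrically for $u$) are solved by the Duhamel/variation-of-parameters formula against the cocycles $U^s_\th(t)$, $U^u_\th(t)$ from {\bf SD3}, namely $\Delta^s(\th) = -\int_0^{\infty} U^s_{\th-\omega t}(t)\,E^s(\th-\omega t)\,dt$ and $\Delta^u(\th) = \int_{-\infty}^0 U^u_{\th-\omega t}(t)\,E^u(\th-\omega t)\,dt$. Convergence of these integrals and the bound $\|\Delta^{s,u}\|_{\rho,X} \le C\|E\|_{\rho,Y}$ (with \emph{no} loss of analyticity strip, only a loss of regularity from $Y$ to $X$) follow immediately from the smoothing rate conditions \eqref{rate1}--\eqref{rate2}, since $\int_0^\infty e^{-\beta_1 t} t^{-\alpha_1}\,dt < \infty$ as $\alpha_1 \in [0,1)$; the derivative bound on $D\Delta^{s,u}$ comes from differentiating the integrand in $\th$, which costs a factor $\delta^{-1}$ via Cauchy estimates. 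These hyperbolic steps use no Hamiltonian structure and are genuinely soft.

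The delicate part is the center equation $\partial_\omega \Delta^c - A_c(\th)\Delta^c = -E^c$ on the $2\ell$-dimensional bundle $X^c_\th$. Here I would invoke \emph{automatic reducibility}: using $P(\th) = DK(\th)N(\th)$ and the symplectically conjugate frame $J_c^{-1}P(\th)$ (available because $\Omega_c$ is non-degenerate by Lemma~\ref{omegadeg}), one changes variables so that in the new frame the linearized operator on the center becomes, up to an error quadratic in $\mathcal F_\omega(K)$, the constant-coefficient upper-triangular form
\begin{equation*}
\begin{pmatrix} \partial_\omega & -S(\th) \\ 0 & \partial_\omega \end{pmatrix},
\end{equation*}
with $S$ the twist matrix of \eqref{twistdefined}. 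The two resulting cohomology equations over the rotation, $\partial_\omega W_2 = \eta_2$ and $\partial_\omega W_1 = \eta_1 + S W_2$, are solved by Fourier series under the Diophantine condition \eqref{eq:Diophantine}, provided the solvability obstructions (the averages $\avg(\eta_2)$, and then $\avg(\eta_1 + S W_2)$) vanish. The first average vanishes by the exactness assumption {\bf H3.1}--{\bf H4}, precisely \eqref{vanishing-assumption}; the second is killed by the freedom of adding $DK(\th)\alpha$ to $\Delta$ and using the invertibility of $\avg(S)$ from the twist condition (Definition~\ref{ND2}), which fixes $\alpha$. The standard small-divisor estimates give $\|W\|_{\rho-\delta} \le C\kappa \delta^{-\nu}\|\eta\|_\rho$ per solved cohomology equation, hence the two compositions yield the $\kappa^2 \delta^{-2\nu}$ and (after one $\th$-derivative) $\kappa^2 \delta^{-2\nu-1}$ factors in \eqref{improve1}; the reducibility error, being a product of $E$ with the reducibility defect which is itself of size $\|\mathcal F_\omega(K)\|_{\rho,Y}$, is solved by the same small-divisor machinery to produce $\tilde E$ with the stated bound $C\kappa^2\delta^{-(2\nu+1)}\|E\|_\rho\|\mathcal F_\omega(K)\|_{\rho,Y}$. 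The main obstacle is the \textbf{center step in the PDE setting}: one must justify all the algebraic identities of automatic reducibility (which in finite dimensions are formal manipulations with the symplectic form) when $\Omega$ is only a bounded form on $X$, $A$ is unbounded $X\to Y$, and $J_c$, $J_c^{-1}$ make sense only on the finite-dimensional $X^c_\th$; the saving grace, and the reason this goes through essentially as in \cite{fontichdelLS07a}, is that all these computations take place \emph{inside} the $2\ell$-dimensional center bundle consisting of smooth functions, where ordinary finite-dimensional linear algebra and calculus apply.

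Finally, for part~(B), if $\Delta_1, \Delta_2$ both solve \eqref{approximatesol} then $\Delta_1 - \Delta_2$ solves the homogeneous linearized equation up to an error of size $\|E\|_{\rho,Y}\|\mathcal F_\omega(K)\|_{\rho,Y}$; projecting onto the hyperbolic directions and using the exponential dichotomy estimates shows $\Pi^{s,u}(\Delta_1-\Delta_2)$ is of that size, while on the center the reduced homogeneous equation $\partial_\omega W_2 = (\text{small})$, $\partial_\omega W_1 = S W_2 + (\text{small})$ forces $W_2$ to be constant up to small error and then $W_1$ to absorb a term $\avg(S)W_2$ which, to keep $W_1$ bounded, requires $W_2 = (\text{small})$; tracing back through the frame $P(\th)$, the leading part of $\Delta_1 - \Delta_2$ is $DK(\th)\alpha$ for a constant $\alpha \in \RR^\ell$, and the remainder is bounded by $C\kappa^2\delta^{-(2\nu+1)}\|E\|_{\rho,Y}\|\mathcal F_\omega(K)\|_{\rho,Y}$, which is \eqref{estimAvg}. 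This uniqueness-up-to-phase structure is identical to the finite-dimensional argument and requires no new infinite-dimensional input beyond the dichotomy bounds already in hand.
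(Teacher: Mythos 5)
Your proposal follows essentially the same route as the paper: split along the invariant bundles of Definition~\ref{ND1}, solve the stable/unstable pieces by the Duhamel integral against the smoothing cocycles (this is the paper's Lemma~\ref{computational}), and on the $2\ell$-dimensional center bundle use the symplectic frame $M(\th) = [DK(\th),\, J_c^{-1}DK(\th)N(\th)]$ to reduce to an upper-triangular cohomology system over the rotation (the paper's Lemmas~\ref{normalization}--\ref{represVF} and the vanishing argument of Section~\ref{reducedEq}), with R\"ussmann estimates (Proposition~\ref{russVF}) giving the $\kappa^2\delta^{-2\nu}$ losses. The quadratic error $\tilde E$ arising from the approximate reducibility (the $B\xi$ and $p_2$ terms in Lemma~\ref{represVF}) is handled exactly as you describe. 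This is the paper's proof.

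One conceptual slip worth fixing: you write that the solvability of the second cohomology equation ``is killed by the freedom of adding $DK(\th)\alpha$ to $\Delta$ and using the invertibility of $\avg(S)\ldots$ which fixes $\alpha$.'' This conflates the two free constants. Adding $DK(\th)\alpha$ to $\Delta$ shifts only $\avg(\xi_1)$ (the component along $DK$ in the frame $M$), and that constant is \emph{never} constrained by any solvability condition -- it survives as the genuine one-parameter family in part~(B). What the twist condition actually fixes is $\avg(\xi_2)$: since $\partial_\omega \xi_2 = \eta_2$ determines $\xi_2$ only up to a constant, and $\avg(\eta_1 + S\xi_2)=0$ is required, the invertibility of $\avg(S)$ lets you solve for $\avg(\xi_2)$ uniquely. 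The paper states this explicitly at the end of Section~\ref{reducedEq} (``Since the average of $\xi_2$ is free, one uses it and the twist condition to solve in $\xi_1$''). With that correction, your outline matches the paper's argument.
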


The previous Lemma~\ref{main} 
is the cornerstone of the KAM iteration and the goal of the following
sections is to prove this result. We will also need to prove that the
non-degeneracy conditions are preserved under the iteration and that
the constants measuring the non-degeneracy deteriorate only 
slightly. This will follow from the quantitative estimates developed 
in Section~\ref{change-nondeg}.

Note that \eqref{approximatesol}, \eqref{improve1} is the main ingredient of 
several abstract implicit function theorems which lead to the existence of
a solution. See, for example \cite{Zehnder75} or, particularly
\cite[Appendix A]{CallejaL} for implicit function theorems based on
existence of approximate inverses with tame bounds. 

Note also that in part (2) of Lemma~\ref{main} we have established some 
uniqueness for the solutions of the linearized equation. In 
Section~\ref{sec:uniqueness} we will show how this can be used to prove 
rather directly the uniqueness result in Theorem~\ref{main}. 

The proof of Lemma~\ref{main} is based on decomposing the 
equation into equations along the invariant bundles 
assumed to exist in the hypothesis that the approximate solution 
satisfies Definition~\ref{ND1}.  In the hyperbolic directions 
we will roughly use the variations of parameters formula, but we will have
to deal with the fact that the perturbations are unbounded. In the center 
directions, we  will have to use the number theory and the geometry. 
Fortunately, the center space is finite dimensional. 

The theory of solutions of the linearized equation is developed in 
Sections~\ref{sec:linearized-hyperbolic} 
and~\ref{sol-center} and Lemma~\ref{main} is 
obtained just putting together Lemma~\ref{computational} 
and the results in Section \ref{reducedEq}. 

We also note that the solutions of the linearized equation in the hyperbolic
directions will be important in the perturbattion theory of 
the  bundles, which is needed to show that the linearized equation can be 
applied repeatedly.

For coherence of the presentation, we have written together all the 
results requiring hyperbolic technology (the solution in the 
hyperbolic directions and the perturbation theory of bundles). 
Of course, we hope that the sections can be read independently
in the order prefered by the reader.  

\section{Solutions of linearized equations on the 
stable and unstable directions}
\label{sec:linearized-hyperbolic}

In this  Section we develop the study of linearized equations
of a system with splitting. See Lemma~\ref{computational}. 
Lemma~\ref{computational} will be one of the ingredients in 
Lemma~\ref{main}.

\begin{lemma}\label{computational}
Let $\mathcal L (\th):X \to Y$ for fixed $\th \in D_\rho$ be a vector field admitting an invariant splitting in the following sense: the space $X$ has
 an analytic family of splittings
$$
X = X^s_\th \oplus X^c_\th \oplus X^u_\th
$$
(We say that a splitting is analytic when the associated projections 
depend on $\th$ in an analytic way) 
invariant in the following sense: we can find families of operators
$\{ U_\th^s(t)\}_{t > 0} $, $\{U_\th^c(t)\}_{t \in \real}$ , $\{U_\th^u(t)\}_{t < 0}$ with domains
$X_\th^s$, $X^c_\th$, $X^u_\th$ respectively. 
These families are analytic in $\th, t$ when considered as operators satisfying 

\begin{equation} 
{U}^{s,c,u}_{\th}(t)X^{s,c,u}_{\th}= X^{s,c,u}_{\th+\omega t}. 
\end{equation} 

Let $\Pi_{\th}^{s,c,u}$ the projections associated to this
splitting. Assume furthermore there exist 
${\beta}_1, {\beta}_2 ,\,{\beta}^\pm_3>0$, 
$\alpha_1, \alpha_2 \in (0,1)$ and $ {C}_h>0$
independent of $\th \in \overline{D_\rho}$ 
satisfying 
${\beta}_3^+ <{\beta}_1 $,
$ {\beta}_3^- <{\beta}_2 $ 
and such that the splitting is characterized by the
following rate conditions:

\begin{equation}
\begin{array}{c}
\|{U}^s_{\th}(t) \|_{\rho,Y,X} \leq {C}_h \frac{e^{-{\beta}_1 t}}{t^{\alpha_1}},\qquad t > 0,\\
\|{U}^u_{\th}(t)  \|_{\rho,Y,X} \leq {C}_h \frac{e^{{\beta}_2 t}}{|t|^{\alpha_2}},
\qquad t < 0,\\
\|{U}^c_{\th}(t) \|_{\rho,X,X} \leq {C}_h e^{{\beta}^+_3 |t|},
\qquad t > 0 \\ 
\|{U}^c_{\th}(t) \|_{\rho,X,X} \leq {C}_h e^{{\beta}_3^- |t|},
\qquad t   < 0. 
\end{array} 
\end{equation}
Let $F^{s,u} \in \mathcal A_{\rho,Y}$ taking values in  $Y^s$ (resp. $Y^u$  ).
Consider the equations 
\begin{equation}\label{eqProj}
\partial_\omega \Delta^{u,s}(\th) -\mathcal L(\th)\Delta^{u,s}(\th) =F^{u,s}(\th)
\end{equation}

Then there are unique bounded solutions for 
\eqref{eqProj} which are given by the following formulas:
\begin{equation}\label{solucionProjs}
\Delta^{s}(\th)=\int_0^{ \infty}U^{s,u}_{\th-\omega
\tau}(\tau)  F^s(\th-\omega \tau)\,d\tau.
\end{equation}  
and
\begin{equation}\label{solucionProju}
\Delta^{u}(\th)=\int_{-\infty}^{0}U^{s,u}_{\th-\omega
\tau}(\tau) F^u(\th-\omega \tau)\,d\tau.
\end{equation}  
Furthermore, the following estimates hold
\begin{equation*}
\|\Delta^{s,u}\|_{\rho,X_{\th }^{s,u}} \leq  
C  \|\Pi^{s,u} _{\th  }\|_{\rho,Y,Y} \|F \|_{\rho,Y_{ \th }^s}. 
\end{equation*}    
\end{lemma}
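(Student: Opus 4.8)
The plan is to solve the stable and unstable equations by the Duhamel (variation of parameters) formula, exploiting the fact that along these directions one has exponential decay \emph{with a smoothing gain} of the evolution operators from $Y$ to $X$. The key observation is that the formulas \eqref{solucionProjs}, \eqref{solucionProju} are the only candidates for bounded solutions: any bounded solution of the inhomogeneous equation projected onto $X^s_\th$ must, by the variation-of-parameters identity together with the forward contraction $\|U^s_\th(t)\|_{\rho,Y,X}\le C_h e^{-\beta_1 t}t^{-\alpha_1}$, be represented by the convergent integral over $[0,\infty)$; similarly on $X^u_\th$ one integrates over $(-\infty,0]$. This is exactly the mechanism of \cite{fontichdelLS07a}, adapted to the present two-space setting.

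First I would verify that the integrals defining $\Delta^s$ and $\Delta^u$ converge absolutely in $X$ and define elements of $\A_{\rho,X}$. For $\Delta^s$, one bounds the integrand by
\begin{equation*}
\|U^s_{\th-\omega\tau}(\tau)F^s(\th-\omega\tau)\|_X \le C_h\, e^{-\beta_1\tau}\tau^{-\alpha_1}\,\|\Pi^s_{\th-\omega\tau}\|_{\rho,Y,Y}\,\|F^s\|_{\rho,Y},
\end{equation*}
and since $\alpha_1\in(0,1)$ the factor $\tau^{-\alpha_1}$ is integrable near $0$ while $e^{-\beta_1\tau}$ controls the tail, giving $\int_0^\infty e^{-\beta_1\tau}\tau^{-\alpha_1}\,d\tau = \Gamma(1-\alpha_1)\beta_1^{\alpha_1-1}<\infty$. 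This yields the stated estimate $\|\Delta^{s,u}\|_{\rho,X^{s,u}_\th}\le C\,\|\Pi^{s,u}_\th\|_{\rho,Y,Y}\,\|F\|_{\rho,Y}$ with $C$ depending only on $C_h,\beta_1,\beta_2,\alpha_1,\alpha_2$. Analyticity in $\th$ follows because the integrand is analytic in $\th\in D_\rho$ (the cocycles and projections are analytic by hypothesis) and the convergence is uniform on $D_\rho$, so a Morera/uniform-limit argument applies; taking the sup over $D_\rho$ gives membership in $\A_{\rho,X}$.

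Next I would check that $\Delta^s$ actually solves \eqref{eqProj}. Using the cocycle property {\bf SD3.1} and the integral (mild) form of the variational equation {\bf SD3.3} — i.e. that $U^s_\th(t)$ is a fundamental solution, so $\frac{d}{dt}U^s_\th(t)=\mathcal L(\th+\omega t)U^s_\th(t)$ in the mild sense — one differentiates $\Delta^s(\th)=\int_0^\infty U^s_{\th-\omega\tau}(\tau)F^s(\th-\omega\tau)\,d\tau$ along the flow direction $\partial_\omega$. A change of variables $\sigma=\tau$ combined with the cocycle identity lets one write $\partial_\omega\Delta^s(\th)$ as the boundary term at $\tau=0$ (which gives $-F^s(\th)$, with a sign dictated by the orientation of the integral) plus $\mathcal L(\th)\Delta^s(\th)$, yielding $\partial_\omega\Delta^s-\mathcal L\Delta^s=F^s$. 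The estimate on $\|D\Delta\|$ in terms of $\kappa,\delta$ would come afterwards (it is not part of this lemma's conclusion) from Cauchy estimates, losing a power of $\delta^{-1}$ per derivative. The unstable case is identical with time reversed.

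The main obstacle, and the point requiring the most care, is justifying the differentiation under the integral sign and the boundary-term computation in the \emph{mild} (not classical) setting, since $\mathcal L(\th)$ is unbounded from $X$ to $Y$ and $U^s_\th(t)$ is only a mild fundamental solution: one must argue within the scale $X\hookrightarrow Y$, checking that $\mathcal L(\th)\Delta^s(\th)$ makes sense in $Y$ (using that $\Delta^s\in X$ and that the integral, after applying $\mathcal L$, still converges because the smoothing $\tau^{-\alpha_1}$ is spent to move from $Y$ back to $X$ before $\mathcal L$ is applied). Uniqueness then follows because the difference of two bounded solutions lies in $\ker(\partial_\omega-\mathcal L)$ restricted to the stable (resp. unstable) subspace, and the rate condition $\beta_1>0$ (resp. $\beta_2>0$) forces such a bounded solution to decay both forward and backward, hence to vanish — the absence of a center component in $F^{s,u}$ being essential here.
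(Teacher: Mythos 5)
Your proposal is essentially the same approach as the paper's: both solve the stable/unstable equations via the Duhamel formula along characteristics and obtain the estimate from $\int_0^\infty e^{-\beta_1\tau}\tau^{-\alpha_1}\,d\tau<\infty$, the only difference being that the paper derives the integral formula by letting $t\to\infty$ in the variation-of-constants identity for an assumed bounded solution (which packages existence and uniqueness together), while you define the integral first, verify it solves the equation, and then argue uniqueness separately. One small slip: the boundary term in your $\partial_\omega$-computation should be $+F^s(\th)$, not $-F^s(\th)$, which is consistent with your stated conclusion $\partial_\omega\Delta^s-\mathcal L\Delta^s=F^s$; and in the uniqueness argument it suffices that the homogeneous bounded stable solution is driven to zero by the \emph{forward} contraction $\delta(\th)=U^s_{\th-\omega t}(t)\delta(\th-\omega t)\to 0$, no backward decay being available or needed on the stable subspace.
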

\begin{remark}
The assumptions of the previous Lemma are very similar to the standard
setup of the theory of dichotomies, but we have to take care of the
fact that the evolution operators are smoothing and the perturbations
unbounded.
\end{remark}

\begin{proof}
The proof is based on the integration of the equation
along the characteristics by using $\th+ \omega t$. We give the proof for
the stable case, the unstable case being symmetric (for negative
times). Furthermore, the proof is similar to the one in
\cite{fontichdelLS071} up to some modifications of the functional
spaces. Denote $\tilde \Delta ^s(t)=\Delta^s(\th+\omega t)$.  By the variation of parameters  formula (Duhamel formula), which is valid in the 
mild solutions context (see \cite{pazy}) one has 
\begin{equation} \label{formulaldeltas}
\tilde \Delta^s(t)=U^s_{\th}(t)
\tilde \Delta^s(0)+\int_0^tU^{s}_{\th}(t-z)F^s(\th+\omega z)\,dz.
\end{equation}
Since the previous formula  is valid for all $\th\in D_\rho\supset \TT^\ell$ we can use it substituting $\th$ by 
$\th -\omega t$ and then
\begin{eqnarray*}
\Delta ^s(\th)=U^s_{\th-\omega t}(t)\Delta ^s(\th-\omega
t)+\int_0^tU^s _{\th-\omega (t-z)}(t-z) F^s(\th-\omega(t- z))\,dz .
\end{eqnarray*}
By the previous bounds on the semi-group, we have that $U_{\th-\omega t}(t) \Delta ^s(\th-\omega t)$
goes to 0 when $t$ goes to $\infty$.  

Hence this leads to the following representation formula after replacing $t-z$ by $\tau >0$ in the integral
\begin{equation}\label{solucioalest1}
\Delta^s(\th)=\int_0^{ \infty}U_{\th-\omega
\tau}(\tau)  F^s (\th-\omega \tau)\,d\tau.
\end{equation}  
Furthermore, from the previous formula, one has that $\Delta^s$ is analytic in $\th$.

We now estimate the integral in \eqref{solucioalest1} to show 
that it converges and to establish bounds on it. 
Notice that the operator $U^s_\th(t)$ maps $Y^s_\th $ into
$X^s_\th $ continuously and the following estimate holds for every
$\th \in D_\rho$ and every $t>0$ 
$$\|U^s_\th(t) F^s (\th)\|_{X^s_{ \th }} \leq
\frac{C}{t^{\alpha_1}} e^{-\beta_1 t } \| F^s (\th) \|_{Y^s_{ \th }}.$$ 

The exponential bound in {\bf SD3.2} ensures the convergence at infinity of the
integral and the fact that $\alpha_1 \in (0,1)$ ensures the convergence
at $0$ and one gets easily the desired bound. 

The unstable case can be obtained by reversing the direction of 
time or given a direct proof which is identical to the present one. 
\end{proof}


\section{Perturbation theory of hyperbolic bundles in an infinite-dimensional framework}\label{change-nondeg}

In this section we develop a perturbation theory for hyperbolic
bundles and their smoothing properties. 
 We consider a slightly more general framework than the one
introduced in the previous sections since 
we hope that the results in this section could be useful 
for other problems (e.g in dissipative PDE's). 
In particular, we note that we only assume that the
spaces $X$ and $Y$ are Banach spaces.
Also, we do not need to assume that the (unbounded)  vector field 
$\X$ giving the equation is Hamiltonian. In agreement with 
previous results, we note that we do not 
assume that the equations define an evolution for all initial 
conditions. We only assume that we can define evolutions in 
the future (or on the past)  of the linearization in some spaces. 
This is obvious for the linear operator and  in this section, 
we will show that this is persistent under small perturbations. 

The theory of perturbations of bundles for evolutions in 
infinite dimensional spaces has a long history.
 See for example~\cite{henry, PlissS99}. A treatment of 
partial differential equations has already been considered in 
the literature. For example in \cite{ChowL95, ChowL96}.

Our treatment has several important differences with the 
above mentioned works. Among them: 
1) We study the stability of smoothing properties, 2) We take advantage 
of the fact that the dynamics in the base is a rotation, so that we 
obtain results in the analytic category, which are false when the dynamics
in the base is more complicated. 3) We present our main results in 
an a-posteriori format, which, of course, implies the standard persistence 
results but has other applications such as validating numerical
or asymptotic results. 4) We present very quantitative estimates of the 
changes of the splitting and its merit figures under perturbations. This 
is needed for our applications since we use it as an ingredient of 
an iterative process and we need to show that it converges.

The main result in this Section is Lemma~\ref{iterNH} which shows 
that the invariant splittings and their smoothing properties when 
we change the linearized equation. Of course, in the applications in 
the iterative Nash-Moser method, the change of the equation will be 
induced by a change in the approximate solution.

\begin{lem}\label{iterNH}
Assume that $A(\th)$ is a family of linear maps as before. 
Let $\tilde A(\th)$ be another family 
such that $\|\tilde A-{A}\|_{\rho,X,Y}$ is small enough. 

Then there exists a family of analytic splittings
\begin{equation*}
X=\tilde X^s_{\th}\oplus
\tilde{X}^c_{\th }\oplus \tilde{X}^u_{\th}
\end{equation*} 
which is
invariant under the linearized equations
\begin{equation*}
\frac{d}{dt}\Delta =\tilde{A}(\th+\omega t)\Delta
\end{equation*} 
 in the sense that the following hold
\begin{equation*}
\tilde{U}^{s,c,u}_{\th}(t)\tilde{X}^{s,c,u}_{\th}=\tilde{X}^{s,c,u}_{\th+\omega t}. 
\end{equation*}
We denote $\tilde\Pi_{\th}^{s,c,u}$  the projections associated to this splitting. 
Then
there exist $\tilde{\beta}_1 ,\, \tilde{\beta}_2 ,\, \tilde{\beta}^+_3,\tilde{\beta}^-_3>0$, $\tilde \alpha_1, \tilde \alpha_2 \in (0,1)$ and $\tilde{C}_h>0$ independent of $\th$ satisfying $\tilde{\beta}_3<\tilde{\beta}_1  $, $\tilde{\beta}_3<\tilde{\beta}_2 $ and such that the splitting is characterized by the following rate conditions:
\begin{equation} 
\begin{split} 
& \|\tilde{U}^s_{\th}(t) \|_{\rho,Y,X} \leq \tilde{C}_h \frac{e^{-\tilde{\beta}_1 t}}{t^{\tilde \alpha_1}},\qquad t > 0,\\
&\|\tilde{U}^u_{\th}(t) \|_{\rho,Y,X} \leq \tilde{C}_h \frac{e^{\tilde{\beta}_2 t}}{|t|^{\tilde \alpha_2}},
\qquad t < 0,\\
&\|\tilde{U}^c_{\th}(t) \|_{\rho,X,X} \leq \tilde{C}_h e^{\tilde{\beta}^+_3 t},
\qquad t >0 \\
& \|\tilde{U}^c_{\th}(t) \|_{\rho,X,X} \leq \tilde{C}_h e^{\tilde{\beta}^-_3 |t|},
\qquad t <0. 
\end{split} 
\end{equation}
Furthermore the following estimates hold 
\begin{align}
\label{proj1NH} \|\tilde \Pi_{\th}^{s,c,u}-\Pi_{\th}^{s,c,u}\|_{\rho,Y,Y
  } &\leq C\|\tilde{A}-A\|_{\rho,X,Y},\\
\label{betas}
|\tilde{\beta}_i- \beta_i  | 
 &\leq C  \|\tilde{A}-A\|_{\rho,X,Y},\qquad i=1,2,3^\pm ,\\
\label{alphas}\tilde{\alpha}_i & = \alpha_i,\qquad i=1,2 \\
\tilde C_h & = C_h.
\end{align} 
\end{lem}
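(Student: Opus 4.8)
The plan is to construct the perturbed splitting as the graph of a small linear operator over the unperturbed one, and then to recover the smoothing cocycles and their rates by a fixed point argument in spaces of analytic functions with values in operators between $X$ and $Y$. First I would set up the graph transform: writing $X = X^s_\th \oplus X^c_\th \oplus X^u_\th$, I look for $\tilde X^s_\th$ as the graph of an operator $G^s_\th : X^s_\th \to X^c_\th \oplus X^u_\th$, and similarly for the center-unstable pieces. The invariance condition $\tilde U^s_\th(t)\tilde X^s_\th = \tilde X^s_{\th+\omega t}$, combined with the Duhamel representation of $\tilde U$ in terms of $U$ and $\tilde A - A$ (exactly as in \eqref{variational-1}), produces a functional equation for $G^{s,c,u}$. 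The key point, which is what the smoothing buys us, is that the unbounded operator $\tilde A - A : X \to Y$ always appears sandwiched against the unperturbed evolution operators $U^{s,u}$, which map $Y$ back into $X$ with the integrable singularity $t^{-\alpha_i}$; so the composition is bounded on $X$ (or from $Y$ to $X$), and the gap condition $\beta_3^\pm < \beta_{1,2}$ makes the relevant integral operator a contraction when $\|\tilde A - A\|_{\rho,X,Y}$ is small. This is the infinite dimensional analogue of the classical stable manifold / dichotomy perturbation argument, and it gives \eqref{proj1NH} directly from the Lipschitz dependence of the fixed point on the parameter $\tilde A - A$.

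Next I would construct the perturbed cocycles $\tilde U^{s,c,u}_\th(t)$ on the already-constructed invariant subspaces. On $\tilde X^s_\th$ I solve the integral equation
\begin{equation*}
\tilde U^s_\th(t) = U^s_\th(t) + \int_0^t U^s_{\th+\omega z}(t-z)\,(\tilde A - A)(\th + \omega z)\,\tilde U^s_\th(z)\, dz
\end{equation*}
by Picard iteration in $\mathcal A_{\rho,\L(Y,X)}$ against a weight $e^{\tilde\beta_1 t} t^{\tilde\alpha_1}$; the smoothing of $U^s$ absorbs the loss of $\tilde A - A$ at each step, and a Gronwall-type estimate against the singular kernel $s^{-\alpha_1} e^{-\beta_1 s}$ yields the rate $\tilde C_h e^{-\tilde\beta_1 t} t^{-\tilde\alpha_1}$ with $\tilde\alpha_1 = \alpha_1$ (the singularity exponent at $t=0$ is not changed by a bounded perturbation) and $|\tilde\beta_1 - \beta_1| \le C\|\tilde A - A\|_{\rho,X,Y}$, which is \eqref{betas} and \eqref{alphas}. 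The unstable case is the time-reversed version. The center cocycle is easier: $X^c_\th$ is finite dimensional, $\tilde A - A$ restricted there is a genuinely bounded finite dimensional perturbation, and the bounds \eqref{rate3} are stable by an elementary Gronwall argument, giving $|\tilde\beta_3^\pm - \beta_3^\pm| \le C\|\tilde A - A\|$. One then checks the cocycle property {\bf SD3.1} and the fundamental-solution identity {\bf SD3.3} for $\tilde U$ from uniqueness of the respective fixed points, and the analyticity in $\th$ follows because the whole construction takes place in spaces of analytic functions $\A_{\rho,\cdot}$, so {\bf SD1} is automatic.

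The main obstacle, and the place where real care is needed, is the interplay between the two norms: $\tilde A - A$ is only small as a map $X \to Y$, so every estimate has to be arranged so that this operator is composed with a smoothing evolution before one tries to iterate, and one must verify that the fixed point equation for $\tilde U^s$ closes \emph{in the same functional space} $\mathcal A_{\rho,\L(Y,X)}$ rather than losing regularity at each Picard step. Concretely, the delicate computation is bounding $\int_0^t (t-z)^{-\alpha_1} e^{-\beta_1(t-z)} \|(\tilde A - A)\|_{X,Y}\, \|\tilde U^s_\th(z)\|_{Y,X}\, dz$ and showing that, with the ansatz $\|\tilde U^s_\th(z)\|_{Y,X} \le M e^{-\tilde\beta_1 z} z^{-\alpha_1}$, the result is again of the same form with a constant strictly smaller than $M$ once $\|\tilde A - A\|_{\rho,X,Y}$ and $\beta_1 - \tilde\beta_1$ are chosen appropriately; this uses $\int_0^\infty s^{-\alpha}e^{-\eta s}\,ds = \Gamma(1-\alpha)\eta^{\alpha-1}$ and a Beta-function identity for the convolution of two such kernels. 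A secondary subtlety is checking that the perturbed projections remain uniformly bounded in $\th$ (finite angle), which follows from \eqref{proj1NH} together with the uniform bounds on $\Pi^{s,c,u}_\th$ assumed in Definition~\ref{ND1}, provided $\|\tilde A - A\|_{\rho,X,Y}$ is small relative to those; and that the gap inequalities $\tilde\beta_3^\pm < \tilde\beta_{1,2}$ persist, which is immediate from \eqref{betas} and the strict inequalities $\beta_3^\pm < \beta_{1,2}$ once the perturbation is small enough.
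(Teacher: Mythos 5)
Your overall plan --- a graph transform for the perturbed subspaces plus a Duhamel fixed point in a weighted space $\mathcal C_{\alpha,\beta,\rho}$ of analytic families of operators $Y \to X$ for the cocycle, with the smoothing singularity $t^{-\alpha_i}$ absorbing the loss $X \to Y$ of $\tilde A - A$ --- is precisely the approach taken in Section~\ref{change-nondeg}, down to the Beta--/Gamma--function estimate for the singular convolution kernel, the conclusion $\tilde\alpha_i = \alpha_i$ in \eqref{alphas}, the elementary finite-dimensional treatment of the center direction, and the observation that analyticity in $\th$ is free because the whole construction lives in $\A_{\rho,\cdot}$.

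The one place where the plan as written has a genuine issue is the sequencing of your Step~1 (``construct $\tilde X^s_\th$ as a graph of $G^s_\th$'') and Step~2 (``construct $\tilde U^s_\th$ on the already-constructed subspace''). In the ill-posed setting the invariance condition $\tilde U^s_\th(t)\tilde X^s_\th = \tilde X^s_{\th+\omega t}$ cannot be reduced to a closed functional equation for the graph map alone: after substituting the Duhamel representation \eqref{variational-1}, the equation still involves the perturbed forward trajectory $W_\th(t)$ living on the graph, and the existence of such a trajectory is exactly what is \emph{not} given a priori when the linearized equation does not generate a flow. If you try to run the two steps sequentially you get a circularity --- the graph equation needs $\tilde U^s$, but $\tilde U^s$ is only meaningful once the invariant domain is known. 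The paper handles this by posing a \emph{coupled} fixed point system, \eqref{temp1} for the graph map $\M_\th$ and \eqref{temp2} for the evolution $\N_\th(t)$ of the stable component along the graph, and proving that the pair $(\Tau_1,\Tau_2)$ is a simultaneous contraction on a product space. Once you reorganize your argument so that $G^s_\th$ and the restricted evolution are produced together as a single fixed point, the contraction estimates you outline (small $\|\tilde A - A\|_{\rho,X,Y}$, the gap $\beta_3^\pm < \beta_{1,2}$, integrability of $t^{-\alpha_i}$) apply essentially unchanged, and your Step~2 then becomes the paper's Step~3, an a posteriori improvement of the rate constants for a cocycle already known to exist.
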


\begin{proof} 
We want to find invariant subpaces for the linearized evolution equation. 
We concentrate on the stable subspace, the theory for the other bundles being similar. 
We do so by finding a
family of linear maps indexed by $\th$, denoted
$\M_\th :X^s_\th \to X^{cu}_\th\equiv X^c_\th \oplus X^u_\th$ 
in such a way that the graph of $\M_\th$ is invariant under the equation. Note that since we do not assume that the equation defines a flow, the fact that we can evolve the elements  in the graph  in the 
future is an important part of the conclusions.  We will also show 
that the family of maps depends analytically in $\th$. 

{\bf Step 1: Construction of the invariant splitting. } 

We will consider first the case of the stable bundle. The others are 
done identically. We will first try to characterize the initial conditions
of the linearized evolution equation  that lead to 
a forward evolution which is a contraction. 
 We will see that these lie in a space.  We will formulate the 
new space as the graph of a linear function $\M_\th$ from $X^s_\th$
to $X^{cu}_\theta$. We will show that if such a characterization 
was possible, $\M_\th$ would have to satisfy some equations. 
To do that, we will formulate the problem of existence of 
forward solutions and the invariance of the bundle as two (coupled) 
fixed point problems (see \eqref{temp1} and 
\eqref{temp2}.)  One fixed point problem will formulate the 
invariance of the space 
, and the other fixed point problem the existence
of forward solutions.  We will show that, in some appropriate 
spaces, these two fixed point problems  can be studied using the 
contraction mapping principle. The definition of the spaces will 
be somewhat elaborate since they will also encode the analytic dependence
on the initial conditions, which is natural if 
we want to show the analytic dependence on $\th$ of 
the invariant spaces. 

Note that, since the main tool will be  a contraction 
argument, it follows that the main result is an a-posteriori 
result. Given approximate solutions of the invariance equations
(obtained e.g numerically or through formal expansions, etc. ) 
one can find a true solution close to the approximate one. 
We leave to the reader the recasting of Lemma~\ref{iterNH} in this style.

Now, we implement in detail the above strategy: We first derive 
the functional equations, then, specify the spaces. 

We start by considering the linearized equation 
with an initial phase $\th$. For subsequent analysis, it will be 
important to study the dependence on $\th$ of the solutions. 
Eventually, we will show that the new invariant spaces depend
analytically on $\th$.  This will translate in the geometric 
properties of the bundles. Consider

\begin{equation} \label{lin-eq-temp}
\frac{d }{dt}W_\th(t) = \tilde{A}(\th+\omega t)W_\th(t)
\end{equation} 
Note that we use the index $\th$ to indicate that we are considering the 
equation with initial phase $\th$. 

We write \eqref{lin-eq-temp} as
\begin{equation} \label{lin-eq-2}
\frac{d }{dt}W_\th(t) = {A}(\th+\omega t)W_\th(t) + B(\th+\omega t)W_\th(t)
\end{equation} 
with $B= \tilde A - A $. Denote $\gamma=\|\tilde A-A\|_{\rho,X,Y} \equiv
\|B\|_{\rho, X,Y}$, which we will assume to be small. 

We recall that this is an equation for $W_\th$ and 
that we are not assuming solutions to exist. Indeed, one of 
our goals is to work out conditions that ensure that forward
solutions exist.  Hence, we will manipulate the equation
\eqref{lin-eq-temp} to obtain some conditions.

We compute the evolution of the projections of $W_\th(t)$ 
along the invariant bundles by the linearized equation when 
$B \equiv 0$.  For $\sigma = s,c,u$  we have: 

\begin{equation}\label{lin-eq-3} 
\begin{split} 
\frac{d}{dt}\left( \Pi^\sigma_{\th + \omega t} W_\th(t) \right) 
=& \left( \omega \cdot \partial_\th \Pi^\sigma_{\th + \omega t} \right) W_\th(t) 
+ \Pi^\sigma_{\th + \omega t} \left(\frac{d}{dt} W_\th(t)\right) \\ 
=& \left( \omega \cdot \partial_\th \Pi^\sigma_{\th + \omega t} \right) W_\th(t) 
+ \Pi^\sigma_{\th + \omega t} A(\th + \omega t)  + 
\Pi^\sigma_{\th + \omega t} 
 B(\th + \omega t) W_\th(t) \\
=& A^\sigma(\th + \omega t) \Pi^\sigma_{\th + \omega t} W_\th(t) +
\Pi^\sigma_{\th + \omega t} B(\th + \omega t) W_\th(t) 
\end{split} 
\end{equation} 
In the last line of \eqref{lin-eq-3}, we have used that 
the calculation in the first two  lines of \eqref{lin-eq-3} is also 
valid when $B = 0$ and that, in that case, the invariance of 
the bundles under the $A$  evolution implies that all the terms 
appearing can be subsumed into $A^\sigma$ which only depends on 
the projection on the bundle. 

Of course the same calculation is valid for the projections over 
the center-unstable ( and center-stable, etc.) bundles. 
We denote by $\Pi^{cu}_\th = \Pi^c_\th + \Pi^u_\th$ the 
projection over the center-unstable bundle.  Note that 
$\Pi^s_\th + \Pi^{cu}_\th = \Id$. 

Our goal now is to try to find a subspace in which the 
solutions of \eqref{lin-eq-temp} (equivalently \eqref{lin-eq-3}) 
can be defined forward in time.

We will assume that this space where solutions can be defined 
is given as the graph of a linear function $\M_\th$ from 
$X^s_\th$ to $X^c_\th \oplus X^u_\th$. 
That is, we introduce the notation
$W_\th^{cu}(t) = \Pi^{cu}_{\th + \omega t} W_\th(t) $, 
$W_\th^{s}(t) = \Pi^{s}_{\th + \omega t} W_\th(t) $
and  we will assume that the solutions of 
\eqref{lin-eq-temp} have the form 
\[
W_\th^{cu}(t) =  \M_{\th + \omega t} W_\th^s(t) 
\]

 We will have to show 
that this linear subspace of $X$  can indeed be found and, show that it depends analytically 
on $\th$. For any $T> 0 $, if there were solutions of 
the equation satisfied by $W^{cu}_\th$ we would have 
Duhamel's formula. Then, imposing that it is in the graph:

\begin{equation}\label{temp1-a}\begin{split}
\M_{\th} W_\th^s(0) =& W_\th^{cu}(0) \\
= &
U^u_{\th +\omega T}(-T)\M_{\th +\omega T}W_\th^s(T)\\
&\phantom{AAAAAA}+\int_0^T U^u_{\th +\omega t}(t-T)\Pi^{cu}_{\th+\omega t} B(\th+\omega t)(Id+\M_{\th+\omega t})W_\th^s(t)\, dt. 
\end{split} 
\end{equation}

Similarly, one has 
\begin{equation}\label{temp2-a}
W_\th^s(t)=U^s_{\th +\omega t} W_\th^s(0)+\int_0^t U^s_{\th +\omega (t-\tau)}(t-\tau)\Pi^{cu}_{\th+\omega \tau} B(\th+\omega \tau)(Id+\M_{\th+\omega \tau})W_\th^s(\tau)\, dt. 
\end{equation}

Notice that the fact that  \eqref{temp2-a} is linear 
implies that if  its solutions  are unique, then 
 $W_\th^s(t)$ depends linearly 
on $W^s_\th(0)$ (it depends very nonlinearly on $\M_\th$). 
 We will write $W^s_\th(t) = \N_\th(t) W^s_\th(0)$ where 
$\N_\th(t)$ is a linear operator. 

 We have then
\begin{equation}\label{temp1}\begin{split}
\M_{\th}
= &
U^u_{\th +\omega T}(-T)\M_{\th +\omega T}\N_\th(T)\\
&\phantom{AAAAAA}+\int_0^T U^u_{\th +\omega t}(t-T)\Pi^{cu}_{\th+\omega t} B(\th+\omega t)(Id+\M_{\th+\omega t}) \N_\th(t) \, dt
\end{split} 
\end{equation}

Similarly, we have that \eqref{temp2-a} is implied 
by 
\begin{equation}\label{temp2} 
\N_\th^s(t)=U^s_{\th +\omega t}(0)+\int_0^t U^s_{\th +\omega (t-\tau)}(t-\tau)\Pi^{cu}_{\th+\omega \tau} B(\th+\omega \tau)(Id+\M_{\th+\omega \tau})\N_\th^s(\tau).
\end{equation}

We can think of \eqref{temp1} and \eqref{temp2} 
as equations for the two unknowns $\M$ and $\N_\th$ where 
$\M$ will be a function of $\th$ and $\N$ a function of 
$\th,t$. 

Note that \eqref{temp1} and \eqref{temp2} are 
already written as fixed point equations for
the operators defined by the right hand side of the equations. 
It seems intuitively clear that the R.H.S. of 
the equations will be contractions since the linear terms 
involve a factor $B$ which we are assuming is small. 
Of course, to make this intuition  precise, we  have 
to specify appropriate Banach spaces and carry out some 
estimates. After the spaces are defined, 
the estimates are somewhat 
standard and straightforward.  We point out that operators similar 
to \eqref{temp1} appear in the perturbation theory of 
hyperbolic bundles and operators similar to 
\eqref{temp2} appear in the theory of perturbations of 
semigroups.  The integral equations are also very common in 
the study of neutral delay equations. 

\subsubsection{Definition of spaces} 
Let $\rho >0$. For  $\th \in \overline{D_\rho}$ we denote by 
$\mathcal L(X^s_\th,X^{cu}_\th)$ the space  of bounded
linear maps from $X^s_{\th}$ into $X^{cu}_{\th}$.
We considered it endowed with the standard  supremum norm of linear operators. 

Denote also by
$\mathcal L_\rho(X^s,X^{cu})$ the space of analytic 
mappings  from $D_\rho$
into the space of linear operators in $X$ that to each 
$\th \in D_\rho$, assign
  a linear operator in 
$\mathcal L(X^s(\th),X^{cu}(\th))$. We also require from the maps 
in $\mathcal L_\rho(X^s,X^{cu})$  that they extend 
continuously to the boundary of $D_\rho$. We endow 
$\mathcal L_\rho(X^s,X^{cu})$ with the topology
of the supremum norm, which makes
 it into a Banach space. 

We also introduce  the standard $C^0( [0,T], \L_\rho(X^s, X^{cu}))$, endowed with the supremum norm.  
For each $\th \in D_\rho$ we denote 
$C^0_\th( [0,T], \L(X^s, X^c))$ the space of 
continuous functions which for every $t \in [0,T]$,
assign a linear operator in 
$\L( X^s_{\th + \omega t}, X^{cu}_{\th +\omega t})$. 
Of course, the space is endowed with the supremum norm. 
For typographical reasons, we will abreviate  the 
above spaces to $C^0$ and $C^0_\rho$. It is a standard  result  that the above spaces are Banach spaces 
when endowed with the above norms. 

\subsubsection{Some elementary estimates}
We denote by $\Tau_1$, $\Tau_2$ the operators given  
by the R.~H.~S.  of the equations \eqref{temp1} and 
\eqref{temp2} respectively. For typographical reasons, we just denote $\| B\| = \sup_{\th \in D_\rho} \| B(\th) \|_{X,Y}$. 

Using just the triangle inequality and bounds on the semi-group $U^s_\th$, 
we have:

\begin{eqnarray*} 
 \| \Tau_2(\M, \N) - \Tau_2(\tilde \M, \tilde \N)  \|_{C^0} 
 \le C \Big(   (1 + \| \M\|_{\L_\rho} \big)\| B\| ) \| \N -\tilde \N\|_{C^0}\\
+ \max( \| \N \|_{C^0}, \| \tilde \N\|_{C^0}) \Big ) \| \M - \tilde \M\|_{C^0}\\
 \| \Tau_1(\M, \N) - \Tau_1(\tilde \M, \tilde N)  \|_{C^0} 
\le\left(  C_h T^{-\alpha_1} e^{-\beta_1 T} \| \M \|_{\L_\rho} + 
C (1 + \| \M\|_{\L_\rho} \| B\|  \right) \| \N -\tilde \N\|_{C^0_\rho} \\
+ C_h T^{-\alpha_1} e^{-\beta_1 T} \| \tilde M  - \M \|_{\L_\rho} 
+ C \| B\| \max( \| \N \|_{C^0}, \| \tilde \N\|_{C^0}) \| \M - \tilde \M\|_{C^0}\\
\end{eqnarray*}

Since $\|U^s_\th\| \le A $, we choose $\S = \{(\N, \M ) \le 2 A\}$. 
We first fix $T$ large enough so that $C T^{-\alpha_1} T e^{-\beta_1 T} \le 10^{-2}$. 
Then, we see that if $\| B\| $ is small enough, $(\Tau_1, \Tau_2)(\S) \subset \S$. 

Furthermore, under another  smallness condition in $\| B \|$, using the previous bounds, we 
see that $(\Tau_1, \Tau_2)$ is a  contraction in $S$.

Therefore, with the above choices  we can get solutions of \eqref{temp1}, \eqref{temp2} 
which are sufficient conditions to obtain a forward evolution and 
that the graph is invariant under this evolution.

\subsubsection{Some small arguments to finish the construction of the invariant subspaces} 
Since we have the function $W$ defined in all $D_\rho$, it follows 
that the function $\W(t) = W(\th + \omega t )$ is defined for all
time as desired. 
The argument also shows that for a fixed $\th$, the function solves 
the linearized equation for a short time. 
Of course, the argument can be done in the same way for other dichotomies
running the time backwards. Hence we obtain the  stability of the splitings 
$X^{sc}$ and $X^u$. The space $X^c$ can be reconstructed as 
$X^c = X^{cu}\cap X^{sc}$.

{\bf Step 2. Estimates on the projections.} 
To get the bounds for the projections we 
use  the same argument as in \cite{fontichdelLS071}. 
We only give the argument for the stable subspace.
Let $\M^{cu}_\th $ be the linear map whose graph gives 
$\tilde X^{cu}_{\th} $.

We write 
\begin{align*}
\Pi^s_{\th } \xi = (\xi^s,0), & 
\qquad \tilde\Pi^s_{  \th}\xi =  (\tilde \xi^s,\M^s_\th \tilde \xi^s), \\
\Pi^{cu}_{\th} \xi = (0,\xi^{cu} ), & 
\qquad \tilde \Pi^{cu}_{ \th}\xi =  (\M^{cu}_\th \tilde \xi^{cu}, \tilde \xi^{cu}) ,
\end{align*}
and then 
\begin{align*}
\xi^s &=\tilde \xi^s +
\M^{cu}_\th \tilde \xi^{cu}, \\
\xi^{cu} &= \M^s_\th \tilde \xi^s +  \tilde \xi^{cu} .
\end{align*}
Since $\M^s_\th$ and $\M^{cu}_\th$ are $O(\gamma)$ in $\L(X,X)$  we can write
$$
\left( \begin{array}{c}
\tilde \xi^s \\
\tilde \xi^{cu}
\end{array}\right) 
= 
\left( \begin{array}{cc}
\Id & \M^{cu}_\th  \\
 \M^s_\th & \Id
\end{array}\right) ^{-1}
\left( \begin{array}{c}
\xi^s \\
\xi^{cu}
\end{array}\right) 
$$
and then 
deduce that 
$$
\|(\tilde\Pi^s_{  \th  } - \Pi^s_{  \th  } )\xi \|_Y
\le \|(\tilde \xi ^s -  \xi^{s}, \M^s_\th \tilde \xi^{s}) \|_Y
\le  C\gamma .
$$

{\bf Step 3. Stability of the  smoothing properties.}

In this step, we will show that the smoothing properties of 
the cocycles are preserved under the lower order  perturbations considered 
before. That is, we will show that if we define the evolutions in the
invariant spaces constructed in Step 1 above, they satisfy bounds of the
form  in {\bf SD.3} but with slightly worse parameters. To be able to
apply this repeatedly, it will be important for us to develop estimates on
the change of the constants as a function of the correction. 

We will first study the stable case.  The unstable case is studied 
in the same way, just reversing the direction of time. The maps $U^s_\th$ and $\tilde U^s_\th$ satisfy the variational equations
$$
\frac{dU^s_\th}{dt}=A(\th+\omega t)U_\th^s(t)
$$
and 
$$
\frac{d\tilde U^s_\th}{dt}=\tilde A(\th+\omega t)\tilde U_\th^s(t). 
$$
Since $(U^s_\th-\tilde U^s_\th)(0)=0$, one has by the variation of parameters formula 
\begin{equation}\label{varparameters} 
\tilde U^s_\th (t)= U_\th^s (t) 
+ \int_0^t  U^s_\th (t-\tau)(\tilde A- A) (\th+\omega \tau)
 \tilde U_\th^s (\tau)\,d\tau,
\end{equation} 
for $t \ge 0$.

Let $\mathcal C_{\alpha,\beta,\rho}(X)$ be the space of continuous functions from $(0,\infty)$ into the space $\A_{\rho,\L(X,X)}$ endowed with the norm  
\[ 
|||U |||_{\alpha,\beta,\rho} = \sup_{\substack{\th \in D_\rho\\ t > 0} } 
||U(\th(t)) ||_{Y, X} e^\beta t^{\alpha} 
\]
We fix $\tilde A$, $A$ and $U_\th^s$ and consider the left hand-side of \eqref{varparameters} as an operator on $\tilde U_\th^s$, i.e. denote 
$$\mathcal T \bar U^s_\th (t)=U_\th^s (t) 
+ \int_0^t  U^s_\th (t-\tau)(\tilde A- A) (\th+\omega \tau)
 \tilde U_\th^s (\tau)\,d\tau. $$
Hence  
\eqref{varparameters} is just a fixed point equation. We note that the operator 
$\Tau$ is affine in its argument. We write it as 
$\Tau(U_\th^s)  = \mathcal O + \L(U_\th^s)$ where $\mathcal O$ is a constant vector and 
$\L$ is a linear operator. To show that $\Tau$ is a contraction, it suffices to 
estimate the norm of $\L$. We have 
$$
|||\L U_1-\L U_2 |||_{\alpha,\beta,\rho}  \leq C \gamma \Big ( t^{\alpha }e^{\beta t} \int_0^t \frac{e^{-\beta_1(t-\tau)}}{(t-\tau)^{\alpha_1}} e^{-\beta \tau}\tau^{-\alpha} d\tau \Big ) |||U_1-U_2 |||_{\alpha,\beta,\rho} .  
$$
We now estimate $$ C(t)=t^{\alpha }e^{\beta t} \int_0^t \frac{e^{-\beta_1(t-\tau)}}{(t-\tau)^{\alpha_1}} e^{-\beta \tau}\tau^{-\alpha} d\tau . $$ We have 
$$
C(t)=t^{\alpha } \int_0^t \frac{e^{(\beta-\beta_1)(t-\tau)}}{(t-\tau)^{\alpha_1}}\tau^{-\alpha} d\tau .
$$
 Changing variables, one gets 
$$
C(t)=t^{\alpha } \int_0^t \frac{e^{(\beta-\beta_1)z}}{(t-z)^{\alpha}}z^{-\alpha_1} dz .
$$

We now choose $\beta$ such that $\beta <\beta_1$ denoting $\beta=\beta_1-\ep$. Making the change of variables $z=tu$ in the integral, one gets 
$$
C(t)=t^{1-\alpha_1 } \int_0^1 \frac{e^{-\ep t u}}{(1-u)^{\alpha}}u^{-\alpha_1} du .
$$

This is clearly bounded for $t \leq 1$ since $\alpha \in (0,1)$ and $1-\alpha_1 >0$. We now consider the case $t >1$. There exists a constant $C>$ universal such that the following estimate holds 
$$
e^{-t\ep u}\leq \frac{C}{(1+t\ep u)^{1-\alpha_1}}
$$
for any $t,u \geq 0$. Therefore we estimate for $t>1$
$$
C(t)\leq Ct^{1-\alpha_1 } \int_0^1 \frac{du}{(1-u)^{\alpha}u^{\alpha_1}(1+\ep t u )^{1-\alpha_1}},
$$
which is uniformly bounded as $t$ goes to $\infty$. Recalling that $||| \mathcal LU_1-\mathcal L U_2 |||_{\rho, \alpha_1,\beta_1} \leq C \gamma$
where $C$ is the constant we just computed, we obtain that
$\mathcal L$ is a
contraction in the space $\mathcal C_{\alpha_1,\beta,\rho}(X)$ for any $\beta<\beta_1$ and any $\alpha_1 \in (0,1)$ when $\gamma$ is sufficiently small. 
\end{proof}

The first consequence of Proposition \ref{iterNH}  is that in the iterative step the small
change of $K$ produces a small change in the invariant splitting and in the hyperbolicity constants.
\begin{corollary}\label{cor1:iterNH}

Assume that $K $ satisfies the hyperbolic non-degeneracy Condition
\ref{ND1} and that 
$\|K-\tilde{K}\|_{\rho,X}$ is small enough. If we denote $\tilde{A}(\th) = D\X(K)$, there exists an analytic family of  splitting for $\tilde{K}$, i.e. 
\begin{equation*}
X=X^s_{{\tilde{K}(\th)}}\oplus
X^c_{{\tilde{K}(\th)}}\oplus X^u_{{\tilde{K}(\th)}}
\end{equation*} 
which is
invariant under the linearized equation \eqref{varNH} (replacing $K$ by $\tilde{K}$) in the sense that 
\begin{equation*}
\tilde{U}^{\sigma}_{\th}(t)X^{\sigma}_{\tilde{K}(\th)}=X^{\sigma}_{\tilde{K}(\th+\omega t)}.  \qquad \sigma = s,c,u
\end{equation*}
We denote $\Pi_{\tilde{K}(\th)}^s$, $\Pi_{\tilde{K}(\th)}^c$ and
$\Pi_{\tilde{K}(\th)}^u$ the projections associated to this splitting. There exist $\tilde{\beta}_1 ,\, \tilde{\beta}_2 ,\, \tilde{\beta}^+_3,\tilde{\beta}^-_3>0$, $\tilde \alpha_1, \tilde \alpha_2 \in (0,1)$ and $\tilde{C}_h>0$ independent of $\th$ satisfying $\tilde{\beta}^+_3<\tilde{\beta}_1  $, $\tilde{\beta}^-_3<\tilde{\beta}_2 $ and such that the splitting is characterized by the following rate conditions:
\begin{eqnarray*}
\|\tilde{U}^s_{\th}(t) \|_{\rho,Y,X} \leq \tilde{C}_h \frac{e^{-\tilde{\beta}_1 t}}{t^{\tilde \alpha_1}},\qquad t > 0,\\
\|\tilde{U}^u_{\th}(t) \|_{\rho,Y,X} \leq \tilde{C}_h \frac{e^{\tilde{\beta}_2 t}}{|t|^{\tilde \alpha_2}},
\qquad t < 0,\\
\|\tilde{U}^c_{\th}(t) \|_{\rho,X,X} \leq \tilde{C}_h e^{\tilde{\beta}^+_3 t},
\qquad t >0 \\
\|\tilde{U}^c_{\th}(t) \|_{\rho,X,X} \leq \tilde{C}_h e^{\tilde{\beta}^-_3 |t|},
\qquad t <0. 
\end{eqnarray*}  
Furthermore the following estimates hold 
\begin{align}
\label{cor1:proj1NH} \|\Pi_{{\tilde{K}(\th)}}^{s,c,u}-\Pi_{{K(\th)}}^{s,c,u}\|_{\rho,Y,Y
  } &\leq C\|\tilde K-{K}\|_{\rho,X},\\
\label{cor1:betas}|\tilde\beta_i - {\beta}_i|  &\leq C  \|\tilde K-{K}\|_{\rho,X},\qquad i=1,2,3 ,\\
\label{cor1:alphas}|\tilde\alpha_i - {\alpha}_i|  &\leq C  \|\tilde K-{K}\|_{\rho,X},\qquad i=1,2\\
\tilde C_h & = C_h.
\end{align} 
\end{corollary}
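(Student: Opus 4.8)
The plan is to obtain the Corollary directly from Lemma~\ref{iterNH}, the only extra input being that the passage from the embedding $K$ to the nearby embedding $\tilde K$ changes the linearized operator by an amount controlled, in the $\|\cdot\|_{\rho,X,Y}$ norm, by $\|\tilde K-K\|_{\rho,X}$. Recall from \eqref{decomposition} that $\X=\A+\N$ with $\A$ linear; hence the operators appearing in the linearized equation \eqref{varNH} along the two embeddings are
\begin{equation*}
A(\th)=\A+D\N(K(\th)),\qquad \tilde A(\th)=\A+D\N(\tilde K(\th)),
\end{equation*}
so that $\tilde A(\th)-A(\th)=D\N(\tilde K(\th))-D\N(K(\th))$ and the unbounded part $\A$ drops out of the difference. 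By {\bf H2} the map $\N$ is analytic from a domain $\U\subset X$ into $Y$, so $D\N$ is analytic from $\U$ into $\L(X,Y)$ and, by the Cauchy estimates for analytic maps between Banach spaces, $D^2\N$ is bounded on any ball strictly inside $\U$. Since (as is the case throughout the iteration) $K(D_\rho)$ lies well inside $\U$, for $\|\tilde K-K\|_{\rho,X}$ small enough the segment from $K(\th)$ to $\tilde K(\th)$ remains in a fixed such ball for every $\th\in D_\rho$, and the mean value inequality yields
\begin{equation*}
\|\tilde A-A\|_{\rho,X,Y}=\sup_{\th\in D_\rho}\|D\N(\tilde K(\th))-D\N(K(\th))\|_{\L(X,Y)}\le C\,\|\tilde K-K\|_{\rho,X},
\end{equation*}
with $C$ depending only on $\U$, $r$ and bounds on $\N$ near $K(D_\rho)$. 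Composing the analytic maps $D\N$ and $\tilde K$ also shows that $\th\mapsto\tilde A(\th)$ is an analytic family in $\L(X,Y)$, so $\tilde A$ is of the type to which Lemma~\ref{iterNH} applies.

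Next I would simply invoke Lemma~\ref{iterNH} for the pair $A,\tilde A$: its hypothesis ``$\|\tilde A-A\|_{\rho,X,Y}$ small enough'' holds as soon as $\|\tilde K-K\|_{\rho,X}$ is small enough. The Lemma then produces the analytic invariant splitting $X=X^s_{\tilde K(\th)}\oplus X^c_{\tilde K(\th)}\oplus X^u_{\tilde K(\th)}$ for the linearized equation along $\tilde K$, with cocycles $\tilde U^\sigma_\th$ obeying the stated rate conditions with constants $\tilde\beta_i,\tilde\alpha_i,\tilde C_h$, and with $\tilde C_h=C_h$; the center component keeps its dimension because the associated finite-rank analytic projection has locally constant rank. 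Feeding the bound $\|\tilde A-A\|_{\rho,X,Y}\le C\|\tilde K-K\|_{\rho,X}$ into the estimates \eqref{proj1NH}, \eqref{betas}, \eqref{alphas} of Lemma~\ref{iterNH} gives
\begin{equation*}
\|\Pi^{s,c,u}_{\tilde K(\th)}-\Pi^{s,c,u}_{K(\th)}\|_{\rho,Y,Y}\le C\|\tilde K-K\|_{\rho,X},\qquad |\tilde\beta_i-\beta_i|\le C\|\tilde K-K\|_{\rho,X},
\end{equation*}
which are \eqref{cor1:proj1NH} and \eqref{cor1:betas}; since \eqref{alphas} actually gives $\tilde\alpha_i=\alpha_i$, the (weaker) inequality \eqref{cor1:alphas} is trivial. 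The constant $C$ in the Corollary is the product of the Lipschitz constant of $D\N$ on a neighborhood of $K(D_\rho)$ and the constant furnished by Lemma~\ref{iterNH}.

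I do not expect a genuine obstacle here: the Corollary is a reformulation of Lemma~\ref{iterNH} replacing ``perturbation of the operator $A$'' by ``perturbation of the embedding $K$''. The single point that needs care is the bookkeeping at the outset --- verifying that, for $\|\tilde K-K\|_{\rho,X}$ small, the range of $\tilde K$ stays inside the domain of analyticity $\U$ of $\N$, so that $D\N(\tilde K(\th))$ and the Lipschitz bound on $D\N$ are legitimate; this is precisely where the condition $\dist_X(K(D_\rho),X\setminus\U)\ge r>0$, inherited along the iterative scheme, is used. Beyond that, everything is the Cauchy estimate for analytic Banach-space maps together with a direct application of Lemma~\ref{iterNH}.
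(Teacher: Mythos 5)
Your proof is correct and follows essentially the same route as the paper's: set $A(\th)=D\X(K(\th))$, $\tilde A(\th)=D\X(\tilde K(\th))$, bound $\|\tilde A-A\|_{\rho,X,Y}$ by $C\|\tilde K-K\|_{\rho,X}$, and then invoke Lemma~\ref{iterNH} verbatim. The only difference is that you are slightly more careful than the paper's one-line proof: you observe that the unbounded linear part $\A$ cancels in $\tilde A-A$, so that the Lipschitz bound really is a statement about $D\N$ (hence about $D^2\N$ on a ball well inside $\U$, via the Cauchy estimate), whereas the paper writes the cruder $\|\X\|_{C^1}\|\tilde K-K\|_{\rho,X}$, which glosses over the fact that it is a second-derivative bound that is actually used. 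Your version is therefore a mild sharpening of the paper's argument rather than a different one.
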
 
\begin{proof} 
We just take $A(\th) = D\X(K(\th)) $, $\tilde A (\th)=  D\X(\tilde K(\th)) $,
$X^{s,c,u}_{ K(\th)} = X^{s,c,u}_{\th} $,
$X^{s,c,u}_{\tilde K(\th)} = \tilde X^{s,c,u}_{\th} $, 
$\Pi^{s,c,u}_{ K(\th)} = \Pi^{s,c,u}_{\th} $ and
$\Pi^{s,c,u}_{\tilde K(\th)} = \tilde \Pi^{s,c,u}_{\th} $
in Lemma~\ref{iterNH}
and we use that 
$\|\tilde A(\th)-A(\th) \|_{\rho,X,Y} \le  \|\X\|_{C^1} \|\tilde K(\th)- K(\th)\|_{\rho,X}$.
\end{proof}


\section{Solution of the cohomology equation on the center subspace}
\label{sol-center}

We now come to the solution of the projected equation \eqref{lin-eq}
 on the center
subspace. The first point which has to be noticed is that by the
spectral non-degeneracy assumption \ref{ND1} the center subspace $X^c_\th$
 is finite-dimensional (with dimension $2 \ell$).
 As a consequence, we end up with standard small divisors equations. 
This is in contrast with other studies of Hamiltonian 
partial differential equations like the Schr\"odinger equation for which 
there is an infinite number of eigenvalues on the 
imaginary axis (see \cite{bourgain} ) and the KAM theory is more involved. 
Another aspect of Definition \ref{ND1} is that the formal symplectic structure
 on $X$ restricts to a standard one on the center bundle. 
 Finally, it has to be noticed that by the finite-dimensionality assumption,
 all the issues related to unbounded operators become irelevant.

We denote 
\begin{equation*}
\Delta^c(\th)=\Pi^c_{\th} \Delta K (\th). 
\end{equation*}
The projected linearized equation \eqref{lin-eq} becomes
\begin{equation}\label{projCen}
\partial_{\omega}
\Delta^c (\th)-(D\X )\circ K\Delta^c(\th)=-\Pi^c_{\th} E(\th)=-E^c(\th).  
\end{equation}

We first recall a well-known result by R\"ussmann (see
\cite{Russmann76,Russmann76a,Russmann75,Llave01c}) which allows to
solve small divisor equations along characteristics. 
\begin{pro}\label{russVF}
Assume that $\omega\in D_h(\kappa,\nu)$ with 
$\kappa>0$ and $\nu \ge \ell-1$ and that $\M$ is a finite dimensional space. Let $h:D_\rho \supset\TT^\ell \rightarrow \M$ be a real analytic function with zero
average with values in $\M$. Then, for any  $0<\delta <\rho$ there exists a unique analytic solution 
$v:D_{\rho-\delta} \supset\TT^\ell \rightarrow \M$ 
of the linear equation 
\begin{equation*}
\sum_{j=1}^l \omega_j \frac{\partial v}{ \partial \th_j}=h
\end{equation*} 
having zero
average.
Moreover, if $h\in \A_{\rho,\M}$ then $v$ satisfies the
following estimate 
\begin{equation*}
\|v\|_{\rho-\delta,\M} \leq C \kappa\delta^{-\nu} \|h\|_{\rho, \M}, \qquad 0<\delta <\rho.
\end{equation*}
The constant $C$ depends on $\nu$ and the dimension of the torus $\ell$.
\end{pro}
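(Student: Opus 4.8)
The plan is to use the classical Fourier-analytic method for the cohomology equation over a Diophantine rotation, in the sharp form due to R\"ussmann. First I would expand $h$ and the unknown $v$ in Fourier series on $\TT^\ell$, $h(\th)=\sum_{k\in\ZZ^\ell}\hat h_k e^{2\pi i k\cdot\th}$ and $v(\th)=\sum_{k\in\ZZ^\ell}\hat v_k e^{2\pi i k\cdot\th}$ with coefficients in $\M$, and substitute into $\sum_{j}\omega_j\partial_{\th_j}v=h$. Matching coefficients gives $2\pi i\,(\omega\cdot k)\,\hat v_k=\hat h_k$ for every $k$. The zero-average hypothesis means $\hat h_0=0$, so the $k=0$ relation holds automatically and we set $\hat v_0=0$ to make $v$ have zero average; for $k\neq 0$ the Diophantine condition guarantees $\omega\cdot k\neq 0$, so $\hat v_k=\hat h_k/(2\pi i\,\omega\cdot k)$ is uniquely determined. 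This already yields uniqueness: a difference of two zero-average solutions solves the homogeneous equation, forcing all its nonzero Fourier coefficients to vanish, while its average vanishes by hypothesis, so it is zero.

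Next I would prove convergence of this formal series and the stated bound. Since $h\in\A_{\rho,\M}$, shifting the contour of the Fourier integral componentwise gives the Cauchy-type estimate $\|\hat h_k\|_\M\le\|h\|_{\rho,\M}e^{-2\pi\rho|k|}$. Combined with $|\omega\cdot k|^{-1}\le\kappa|k|^\nu$ this yields $\|\hat v_k\|_\M\le(2\pi)^{-1}\kappa|k|^\nu\|h\|_{\rho,\M}e^{-2\pi\rho|k|}$. For $z\in D_{\rho-\delta}$,
\begin{equation*}
\|v(z)\|_\M\le\sum_{k\neq 0}\|\hat v_k\|_\M e^{2\pi(\rho-\delta)|k|}\le\frac{\kappa}{2\pi}\|h\|_{\rho,\M}\sum_{k\neq 0}|k|^\nu e^{-2\pi\delta|k|},
\end{equation*}
so the series converges absolutely and uniformly on $D_{\rho-\delta}$; hence $v$ is analytic there and extends continuously to $\overline{D_{\rho-\delta}}$. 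Reality of $v$ when $h$ is real follows from $\hat v_{-k}=\overline{\hat v_k}$, which holds because $\hat h_{-k}=\overline{\hat h_k}$ and $\omega\cdot(-k)=-(\omega\cdot k)$.

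The only delicate point is the final sum $\sum_{k\neq 0}|k|^\nu e^{-2\pi\delta|k|}$. Estimating it crudely, by collecting the $O(|k|^{\ell-1})$ lattice points on each sphere $|k|=n$ and comparing with $\int_0^\infty t^{\nu+\ell-1}e^{-2\pi\delta t}\,dt$, only produces a bound of order $\delta^{-(\nu+\ell)}$, which is weaker than claimed. To obtain the sharp power $\delta^{-\nu}$ I would invoke R\"ussmann's refinement: the multi-indices $k$ for which $|\omega\cdot k|$ is genuinely small are separated (two such $k$ of comparable size cannot be too close), so the weight $|k|^\nu$ is effectively felt only on a sparse set of modes; organizing the sum accordingly --- splitting into dyadic blocks in $|k|$ and, within each block, into the few near-resonant modes and the rest --- recovers $\|v\|_{\rho-\delta,\M}\le C\kappa\delta^{-\nu}\|h\|_{\rho,\M}$ with $C=C(\ell,\nu)$. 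This counting lemma is the heart of the estimate; since $\M$ is finite dimensional there are no functional-analytic complications, and everything else is bookkeeping.
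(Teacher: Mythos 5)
Your proposal is correct and mirrors the paper's own treatment: Proposition~\ref{russVF} is stated there as a classical result of R\"ussmann with references but no proof, and your Fourier-series setup, the uniqueness argument, and the Cauchy estimate $\|\hat h_k\|_\M\le\|h\|_{\rho,\M}e^{-2\pi\rho|k|}$ are all standard and sound. You also correctly identify the one nontrivial point, namely that the sharp exponent $\delta^{-\nu}$ (rather than the naive $\delta^{-(\nu+\ell)}$) requires R\"ussmann's counting of near-resonant modes, which you invoke exactly as the paper does by citing R\"ussmann.
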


As in \cite{fontichdelLS071} and  \cite{LGJV05}, we will find an explicit change of variables so that 
 the vector-field $D\X \circ K\Delta^c(\th)$ becomes a constant coefficient vector-field. Then we will be able to apply the small divisor result as stated in Proposition \ref{russVF} to the cohomology equations \eqref{projCen}. 

\subsection{Geometry of the invariant tori}

As it is well known in KAM theory, in a finite dimensional framework, maximal invariant tori are Lagrangian submanifolds and whiskered tori are isotropic. In our context of an infinite dimensional phase space $X$, the picture is less clear, but nevertheless, thanks to our assumptions (which are satisfied in some models under consideration), one can produce a non trivial solution. 

We prove the following lemma on the isotropic character of approximate invariant tori. 
\begin{lemma}\label{isotapprox}
Let $K: D_\rho\supset\mathbb{T}^\ell \rightarrow \mathcal{M}$, $\rho>0$, be a real analytic mapping.
Define the error in the invarianne equation as
\begin{equation*}
E(\th):= \partial_{\omega} K(\th)-\X(K(\th)).
\end{equation*} 
Let $L(\th)=DK(\th)^\perp J_cDK(\th)$ be the matrix which expresses 
the form  $K^*\Omega$
on the torus in the canonical basis. 

There exists
  a constant $C$ depending on $l$, $\nu$ and $\|DK\|_{\rho}$ such that 
\begin{equation*}
\|L\|_{\rho -2\delta,X_\th^c,X_\th^c}   \leq C \kappa \delta^{-(\nu+1)} \|E\|_{\rho,Y}, \qquad 0<\delta <\rho/2. 
\end{equation*}
In particular, if $E=0$  then $$L \equiv 0$$
\end{lemma}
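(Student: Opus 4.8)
The plan is to mimic the finite-dimensional argument for the isotropy of approximately invariant whiskered tori (as in \cite{fontichdelLS071, LGJV05}), adapting it to the present infinite-dimensional setting where $\Omega$ is an unbounded form but $L(\th) = DK(\th)^\perp J_c DK(\th)$ still makes sense as a matrix-valued function because $K$ is a $C^2$ embedding into $X$ and we only pair tangent vectors that lie in $X$. First I would observe that, since the pull-back $K^*\Omega$ is a two-form on $\TT^\ell$ represented by the antisymmetric matrix $L(\th)$, the exactness hypothesis \textbf{H3.1} gives $K^*\Omega = d\alpha_K$, so $K^*\Omega$ is closed; writing this out in coordinates yields the relations
\begin{equation*}
\partial_{\th_i} L_{jk} - \partial_{\th_j} L_{ik} + \partial_{\th_k} L_{ij} = 0.
\end{equation*}
This is the key structural input that replaces the ``$d(K^*\Omega)=0$'' step of the classical proof.

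Next I would derive an equation for $\partial_\omega L$, i.e. the Lie derivative of $K^*\Omega$ along the constant vector field $\omega$. Using $\partial_\omega K = \X\circ K + E$ and bilinearity/antisymmetry of $\Omega$, together with the weak Hamiltonian identity \textbf{H4} in the form \eqref{vanishing-assumption} (or rather its infinitesimal/differential consequence $i_{\X\circ K} K^*\Omega$ is closed, which follows from $d\Omega = 0$ expressed through \textbf{H3}/\textbf{H4}), one finds that $\partial_\omega L$ equals a term coming purely from the error $E$ plus a term of the form $d(i_\omega K^*\Omega)$. The Cartan-type computation here is: $\partial_\omega (K^*\Omega) = L_\omega (K^*\Omega) = d\, i_\omega (K^*\Omega) + i_\omega\, d(K^*\Omega)$, and the second term vanishes by closedness; meanwhile $i_\omega(K^*\Omega)$ is exact up to the error because $\partial_\omega K - \X\circ K = E$ and $i_{\X\circ K}(K^*\Omega)$ is itself closed (hence its contribution is killed after one more $d$). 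So one arrives at an identity schematically of the form
\begin{equation*}
\partial_\omega L = G(E),
\end{equation*}
where $G(E)$ is linear in $E$ and its first derivative, bounded in terms of $\|DK\|_\rho$, and maps into the space of antisymmetric matrices with $\|G(E)\|_{\rho-\delta} \le C\delta^{-1}\|E\|_{\rho,Y}$ after a Cauchy estimate to absorb the derivative of $E$.

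Then I would invoke R\"ussmann's estimate, Proposition~\ref{russVF}, applied componentwise to the finite-dimensional ($2\ell\times 2\ell$, or rather the $\ell\times\ell$ tangential block) matrix equation $\partial_\omega L = G(E)$. For this one must check that $G(E)$ has zero average on $\TT^\ell$: this is exactly where exactness is used decisively, since $\avg(\partial_\omega L) = 0$ automatically (it is a total $\omega$-derivative of a periodic function) forces $\avg(G(E))=0$, so the cohomological equation is solvable and its unique zero-average solution is $L$ itself, up to adding the (zero) average of $L$ --- but here one argues instead directly: $L$ solves $\partial_\omega L = G(E)$, $G(E)$ has zero average, hence by uniqueness in Proposition~\ref{russVF} $L$ coincides with the R\"ussmann solution and satisfies $\|L\|_{\rho-2\delta} \le C\kappa\delta^{-\nu}\|G(E)\|_{\rho-\delta} \le C\kappa\delta^{-(\nu+1)}\|E\|_{\rho,Y}$, after losing $\delta$ in two steps (one $\delta$ for the Cauchy estimate on $D(E)$, one $\delta$-budget for the small-divisor solution). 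The case $E=0$ gives $\partial_\omega L = 0$ with $L$ analytic on $\TT^\ell$; combined with the closedness relations this forces $L$ constant, and then one more argument (the closedness relations together with $L$ being a coboundary, or directly $\avg(L)=0$ via exactness since $\int_{\TT^\ell} K^*\Omega$ over any $2$-cycle vanishes) forces $L\equiv 0$.

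The main obstacle I anticipate is making the Cartan-calculus manipulation rigorous in the weak framework: $\Omega$ is only a bounded bilinear form on $X\times X$ (unbounded on $Z=L^2$), $\X$ is unbounded $X\to Y$, and $\X\circ K$ need not lie in $X$, so the naive ``$i_{\X\circ K}(K^*\Omega)$ is closed because $d\Omega=0$'' has to be justified using only \textbf{H3} and \textbf{H4} as stated --- i.e. the integrated Hamiltonian identity \eqref{eq:Hamiltonian-form} and its consequence \eqref{vanishing-assumption} over loops --- rather than any pointwise exterior-derivative identity. Concretely, one should derive \eqref{vanishing-assumption}'s differential analogue by testing against the $2$-parameter families of loops spanning $H_2$, or equivalently show that the $1$-form $\th\mapsto \Omega(\X\circ K(\th), DK(\th)\,\cdot\,)$ is closed by pairing with \textbf{H4} applied to the paths $s\mapsto K(\th + s\,e_i)$. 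Everything else --- the averaging, the R\"ussmann estimate, the Cauchy estimate converting $\delta^{-\nu}$ into $\delta^{-(\nu+1)}$, and the $E=0$ rigidity --- is routine once that identity is in hand.
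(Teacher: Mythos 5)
Your proposal is correct and reproduces the argument the paper invokes (which the paper itself states only as a one-line sketch with a pointer to \cite{LGJV05}): use the exactness hypothesis \textbf{H3.1} to write $K^*\Omega=d\alpha_K$, hence $L=Dg_K^\perp-Dg_K$ and in particular $\avg(L)=0$; compute $\partial_\omega L$ from the approximate invariance equation and the Hamiltonian identity \textbf{H4}, finding it linear in $E,DE$; then close with a Cauchy estimate and Proposition~\ref{russVF}. Your worry about rigorizing the Cartan-calculus manipulation in the weak infinite-dimensional setting is largely unnecessary, since once pulled back to $\TT^\ell$ all the forms involved ($K^*\Omega$, $i_\omega K^*\Omega$, $\alpha_K$, and their exterior derivatives) are finite-dimensional objects, so the only place the weak hypotheses matter is in justifying the two integrated identities $K^*\Omega=d\alpha_K$ and \eqref{vanishing-assumption}, exactly as you indicate.
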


\begin{proof}
By assumption $\bf H3.2$ we have that there exists a one-form $\alpha_K$ on the torus $\TT^\ell$ such that 
$$
K^*\Omega=d\alpha_K. 
$$
In coordinates on $\TT^\ell$, $\alpha_K$ writes 
$$
\alpha_K=g_K(\th)d\th. 
$$
Hence one has $L(\th)=Dg_K^\perp(\th)-Dg_K(\th)$ and the lemma follows from Cauchy estimates and Proposition \ref{russVF} (see also \cite{LGJV05}).
\end{proof}

\subsection{Basis of the center subspace $X^c_\th$}

We introduce a suitable representation of the center subspace $X^c_\th$. 
In \cite{LGJV05,fontichdelLS071,fontichdelLS07a} it is shown that  the change of variables
given by the following matrix
\begin{equation}\label{change}
[DK(\th), J_c^{-1}DK(\th)N(\th)].
\end{equation}
allows to transform the linearized equations in the 
center subspace into two cohomology equations  with constant coefficients. 

The argument presented in the references above 
works word by word here thanks to the fact that the  center subpace $X^c_\th$
is finite dimensional.  We will go over the main points in 
Section~\ref{normalizationprocedure}. We will start by recalling some 
symplectic properties. 

\subsubsection{Some symplectic preliminaries}
\label{symplecticproperties}

We prove the following lemma.

\begin{lemma}\label{omegadeg} 
The $2-$form $\Omega$ which is the restriction 
to the center subspace  is non-degenerate
 in the sense that $\Omega(u,  v ) = 0 \, 
\forall u \in X$ implies that $v = 0 $. 
\end{lemma}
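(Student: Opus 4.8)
\emph{Proof proposal.} The plan is to deduce the non-degeneracy of $\Omega$ restricted to $X^c_\th$ from the non-degeneracy of $\Omega$ on all of $X$ ({\bf H3}). The key intermediate fact I would establish is that, relative to the invariant splitting \eqref{splitting}, the center subspace $X^c_\th$ is $\Omega$-orthogonal to $X^s_\th$ and to $X^u_\th$; once this is known, a vector $v\in X^c_\th$ that is $\Omega$-orthogonal to $X^c_\th$ is automatically $\Omega$-orthogonal to $X^s_\th\oplus X^c_\th\oplus X^u_\th=X$, hence zero by {\bf H3}. To reach the orthogonality statements I would exploit that the linearized cocycles $U^{s,c,u}_\th$ preserve $\Omega$: since the equation is (weakly) Hamiltonian, {\bf H4} together with $\Omega$ being constant gives, at least formally, $\mathcal{L}_{\X}\Omega=d(i_{\X}\Omega)=d(dH)=0$, equivalently $A(\th)=D(\X\circ K)(\th)$ is infinitesimally symplectic, $\Omega(A\xi,\eta)+\Omega(\xi,A\eta)=0$, and therefore $t\mapsto\Omega\big(U^{\sigma}_\th(t)\xi,U^{\sigma'}_\th(t)\eta\big)$ is constant wherever defined:
\[
\Omega\big(U^{\sigma}_\th(t)\xi,\,U^{\sigma'}_\th(t)\eta\big)=\Omega(\xi,\eta).
\]
The hard part will be justifying this identity rigorously in the present unbounded, possibly flow-less setting: I would work from the mild/integral formulation \eqref{variational-1}, using that on the (arbitrarily regular) spaces of Section~\ref{sec:framework} the evolutions $U^{s,u}_\th(t)$ for $t\neq 0$ map into the domain of $A$ as an $X$-valued operator so that the differentiation in $t$ is legitimate; for the center block no care is needed since $X^c_\th$ is finite dimensional.

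Next I would feed in the rate conditions {\bf SD3.2}. Since $\Omega$ is bounded on $X\times X$ and $X\hookrightarrow Y$, \eqref{rate1} yields $\|U^s_\th(t)v\|_X\le C e^{-\beta_1 t}t^{-\alpha_1}\|v\|_X$ for $v\in X^s_\th$, \eqref{rate2} the mirror estimate for $U^u_\th$ as $t\to-\infty$, and \eqref{rate3} gives $\|U^c_\th(t)w\|_X\le C e^{\beta_3^+ t}\|w\|_X$ for $w\in X^c_\th$. Then for $v,v'\in X^s_\th$ I would write $|\Omega(v,v')|=|\Omega(U^s_\th(t)v,U^s_\th(t)v')|\le C e^{-2\beta_1 t}t^{-2\alpha_1}\|v\|_X\|v'\|_X$, which tends to $0$ as $t\to+\infty$, so $\Omega|_{X^s_\th\times X^s_\th}=0$; the symmetric argument with $t\to-\infty$ gives $\Omega|_{X^u_\th\times X^u_\th}=0$. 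For a mixed pair $v\in X^s_\th$, $w\in X^c_\th$ I would write $|\Omega(v,w)|=|\Omega(U^s_\th(t)v,U^c_\th(t)w)|\le C e^{(\beta_3^+-\beta_1)t}t^{-\alpha_1}\|v\|_X\|w\|_X$, which tends to $0$ as $t\to+\infty$ because $\beta_1>\beta_3^+$; hence $\Omega|_{X^s_\th\times X^c_\th}=0$ and, by antisymmetry, $\Omega|_{X^c_\th\times X^s_\th}=0$. Using $\beta_2>\beta_3^-$ and $t\to-\infty$ in the same way kills $\Omega|_{X^u_\th\times X^c_\th}$ and $\Omega|_{X^c_\th\times X^u_\th}$.

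To finish, let $v\in X^c_\th$ satisfy $\Omega(u,v)=0$ for every $u\in X^c_\th$. Given an arbitrary $\xi\in X$, I would split $\xi=\xi^s+\xi^c+\xi^u$ along \eqref{splitting}; the stable and unstable components pair to zero with $v$ by the previous paragraph and the center component by hypothesis, so $\Omega(\xi,v)=0$ for all $\xi\in X$, whence $v=0$ by {\bf H3}. Thus $\Omega|_{X^c_\th}$ is non-degenerate; in particular $J_c$ is invertible, and the preservation identity above also shows $U^c_\th$ preserves the symplectic form it induces on $X^c_\th$, which is the statement invoked in Remark~\ref{hamiltoniancase}.
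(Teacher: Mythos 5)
Your proposal is correct and follows essentially the same route as the paper: you show the cocycles $U^{s,c,u}_\th$ preserve $\Omega$ (the paper does this via the symmetry of $\nabla^2 H\circ K$, you via infinitesimal symplecticity of $A(\th)$, which is the same computation), then use the rate conditions of {\bf SD3.2} to kill the stable--stable, unstable--unstable and mixed hyperbolic--center pairings as $t\to\pm\infty$, and conclude non-degeneracy on $X^c_\th$ from {\bf H3} and the splitting. Your write-up is in fact somewhat more explicit than the paper's (which leaves the decay estimates and the final reduction to {\bf H3} implicit), but there is no substantive difference in the argument.
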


\begin{proof}
A quick proof would follow from the fact that the symplectic form 
is non-degenerate at the origin. Then, because the non-degeneracy assumptions 
are open, it follows in a small neighborhood. The following argument gives a more global argument valid in all the center
manifold. 

By the non-degeneracy assumptions \ref{ND1}, there exist maps $U_\th^{s,c,u}(t)$ generating the linearizations on $X^{s,c,u}_\th$. These maps preserves $\Omega$. Indeed, one has: let $u(t),v(t)$ satisfy 
$$\frac{du(t)}{dt}=A(\th+\omega t)u(t)$$
and 
$$\frac{dv(t)}{dt}=A(\th +\omega t)v(t)$$
where $A(\theta)=J^{-1}\nabla^2 H\circ K(\theta).$ Then 
$$\Omega(u(t),v(t))=\Omega(u(0),v(0)). $$
Indeed, 
$$\frac{d}{dt}\Omega(u(t),v(t))=\Omega(\dot u(t),v(t))+\Omega(u(t),\dot v(t))$$
$$=<J^{-1}\nabla ^2 H\circ K(\th +\omega t) u(t), Jv(t)>+< u(t), J J^{-1}\nabla ^2 H\circ K(\th +\omega t) v(t)>$$
$$=-<\nabla ^2 H\circ K(\th +\omega t) u(t), v(t)>+<u(t), \nabla ^2 H\circ K(\th +\omega t)  v(t)>$$
since $\nabla^2 H\circ K$ is symmetric. Hence the result.

Therefore, we have for any $u,v \in X^{s,c,u}_\th$

$$
\Omega(u,v)=\Omega(U^{s,c,u}_\th (t)u, U^{s,c,u}_\th (t) v), \qquad t \in \RR^+, \RR, \RR^-.   
$$
Using now the estimates in \ref{ND1}, we have the following: the form $\Omega$ satisfies $\Omega(u,v)=0$ in the following cases
\begin{itemize}
\item $u,v\in X^s_{\th} $,
\item $u,v\in X^u_{\th} $,
\item $u\in X^s_{\th}\cup X^u_{\th}  $ and $v\in X^c_{\th} $,
\item $v\in X^c_{\th} $ and $v\in X^s_{\th}\cup X^u_{\th}  $
\end{itemize}

This implies that the form $\Omega$ restricted to the center bundle $X^c_{\th}$ is non degenerate and the lemma is proved. 

\end{proof}

The form $\Omega$ is then a symplectic form since we assumed that the restriction of the form to $X^c_{\th}$ is closed.  Denote by $J_c$ the restriction of the operator $J$ on $X^c_{\th}$. 
Finally we define the operator $M(\th)$ from $\RR^\ell$ into $X^c_\th$.  
\begin{equation} \label{definicioMtilde}
M(\th)=[DK(\th), J_c^{-1} DK(\th)N(\th)].
\end{equation}
Notice that by assumption $X^{c}_\th$ is isomorphic to $Y^c_\th$. We emphasize on the fact that the operator $M(\th)$ belongs to $X^c_\th$. Indeed, it is clear from the equation that $DK$ (by just differentiating) belongs to the center space and so is $J_c^{-1} DK(\th)N(\th)$ by the fact that we consider the restriction $J_c$ of $J$ to the center.

\subsection{Normalization procedure}
\label{normalizationprocedure}

Let $W: D_\rho \supset \TT^\ell \rightarrow X^c_\th$ be such that  

$$\Delta^c(\th)=M(\th)W(\th)$$
From now on, the proof is very similar to the one in \cite{LGJV05} and we
just sketch the proofs. We refer the reader to  \cite{LGJV05} for the details. 
The following first lemma provides a reducibility argument for {\sl exact} solutions of \eqref{eq:embedding}. We note that since  the space $X^c_\th$ is finite dimensional the symplectic form 
needs to be defined only in a very weak sense.  
\begin{lemma}\label{normalization}
Let $K$ be a solution of 
\begin{equation*}
\partial_{\omega} K(\th)=\X(K(\th))
\end{equation*} 
with $M$ be defined as above and $K(\TT^\ell)$ is an isotropic manifold. 
Then there exists an $\ell \times \ell$-matrix $S(\th)$ such that 
\begin{equation}\label{parbloc}
\partial_{\omega}M(\th)-A(\th) M(\th)=M(\th)\begin{pmatrix} 0_\ell & S(\th)\\ 0_\ell &
  0_\ell \end{pmatrix},
\end{equation}
where 
\begin{equation*}
S(\th)=N(\th) DK(\th)^\top [J_c^{-1}\partial_\omega(DKN)-A(\th) J_c^{-1}DKN](\th)
\end{equation*}
where we have denoted $A(\th)=J_c^{-1}D(\nabla H (K))$.  
\end{lemma}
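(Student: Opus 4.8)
The plan is to differentiate the invariance equation \eqref{eq:embedding} along the torus and then read off the block structure of the conjugating matrix $B(\th):=\begin{pmatrix}0_\ell & S(\th)\\ 0_\ell & 0_\ell\end{pmatrix}$ by testing against the symplectic form $\Omega$, all computations taking place in the finite-dimensional space $X^c_\th$ where, by Lemma~\ref{omegadeg}, $\Omega$ is non-degenerate. Write $M=[M_1,M_2]$ with $M_1=DK$ and $M_2=J_c^{-1}DK\,N$ as in \eqref{definicioMtilde}. Differentiating $\partial_\omega K=\X\circ K$ in $\th$ gives $\partial_\omega(DK)=D\X(K)\,DK=A\,DK$, so $\partial_\omega M_1-AM_1=0$; thus once we know that $\partial_\omega M-AM$ takes values in $X^c_\th$ — so that it equals $M\,B$ for a \emph{unique} $2\ell\times 2\ell$ matrix $B(\th)$, using that $M(\th):\RR^{2\ell}\to X^c_\th$ is an isomorphism — the first block column of $B$ automatically vanishes. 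To check $V:=\partial_\omega M_2-AM_2\in X^c_\th$: since $\Pi^c_\th M_2(\th)=M_2(\th)$ one has $(\Id-\Pi^c_\th)\partial_\omega M_2=(\omega\cdot\partial_\th\Pi^c_\th)M_2$, while differentiating $U^c_\th(t)X^c_\th=X^c_{\th+\omega t}$ in $t$ at $t=0$ gives $(\Id-\Pi^c_\th)A(\th)v=(\omega\cdot\partial_\th\Pi^c_\th)v$ for $v\in X^c_\th$, in particular for $v=M_2(\th)$; subtracting, $(\Id-\Pi^c_\th)V=0$. (That $M$ is an isomorphism onto the $2\ell$-dimensional $X^c_\th$ follows because $DK$ has rank $\ell$, $N$ and $J_c^{-1}$ are invertible, and the ranges of $M_1$ and $M_2$ intersect trivially, which is immediate from the pairings below.)

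Next I would record the symplectic normal form of the frame $M$. Using that $K(\TT^\ell)$ is isotropic — which holds since $E=0$, by Lemma~\ref{isotapprox}, i.e.\ $DK^\top J_c DK=0$ — the definition $N^{-1}=DK^\top DK$, and $J|_{X^c_\th}=J_c$, one computes
$$\Omega(M_1 a,M_1 b)=a^\top DK^\top J_c DK\,b=0,\qquad \Omega(M_1 a,M_2 b)=a^\top DK^\top DK\,Nb=a^\top b,$$
for all $a,b\in\RR^\ell$; in particular $\Omega(M_2(\th)c,M_1(\th)b)=-b^\top c$ is constant in $\th$. Now for fixed constant $c\in\RR^\ell$, $Vc=\partial_\omega(M_2 c)-A(M_2 c)$, and using $\partial_\omega M_1=AM_1$ together with the constancy just noted,
$$\Omega(Vc,M_1 b)=\partial_\omega\bigl[\Omega(M_2 c,M_1 b)\bigr]-\Omega(M_2 c,AM_1 b)-\Omega(AM_2 c,M_1 b)=-\bigl[\Omega(AM_2 c,M_1 b)+\Omega(M_2 c,AM_1 b)\bigr]=0,$$
the last equality being the infinitesimal symplecticity $\Omega(Au,v)+\Omega(u,Av)=0$, which follows from $A=J^{-1}\nabla^2 H(K)$, antisymmetry of $\Omega$, and symmetry of $\nabla^2 H$ — exactly the computation already carried out in the proof of Lemma~\ref{omegadeg}. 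Writing $Vc=M_1 p+M_2 q$ and using the normal form, $\Omega(Vc,M_1 b)=-b^\top q$, so $q=0$; hence the $M_2$-component of $V$ vanishes, i.e.\ $B_{22}=0$.

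It then remains to identify the surviving block $S=B_{12}$. From $V=M_1 S=DK\,S$, applying $DK^\top$ and using $N^{-1}=DK^\top DK$,
$$S=N\,DK^\top V=N\,DK^\top\bigl(\partial_\omega(J_c^{-1}DK\,N)-A\,J_c^{-1}DK\,N\bigr)=N\,DK^\top\bigl(J_c^{-1}\partial_\omega(DK\,N)-A\,J_c^{-1}DK\,N\bigr),$$
the last step using that $J_c$ is constant, so $\partial_\omega$ commutes with $J_c^{-1}$. This is precisely the asserted formula for $S(\th)$, and assembling the two block columns yields \eqref{parbloc}.

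The one genuinely delicate point is the claim in the first paragraph that $\partial_\omega M-AM$ lands in $X^c_\th$: neither $\partial_\omega$ alone nor $A$ alone preserves the moving, $\th$-dependent center bundle, and one must verify that the two ``off-center'' defects $(\omega\cdot\partial_\th\Pi^c_\th)M_2$ cancel. Everything else is bookkeeping with finite-dimensional matrices, legitimate because $X^c_\th$ is $2\ell$-dimensional and $\Omega|_{X^c_\th}$ is non-degenerate. The argument is the transcription to the infinite-dimensional setting of the one in \cite{LGJV05,fontichdelLS071}, the only new care being that the center object is a genuine subspace rather than the whole (finite-dimensional) phase space.
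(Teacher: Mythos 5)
Your proposal is correct and follows essentially the same route as the paper's proof: differentiate the invariance equation to annihilate the first block column, expand the remaining column $\partial_\omega(J_c^{-1}DK\,N)-A\,J_c^{-1}DK\,N$ in the frame $\{DK,\,J_c^{-1}DK\,N\}$, kill the $J_c^{-1}DK\,N$-component by pairing with $DK^\top J_c$ using isotropy and the symmetry of $\nabla^2 H$ (your $\Omega$-pairing with infinitesimal symplecticity is the same computation in different notation), and recover $S$ by multiplying with $N\,DK^\top$. The only addition is your explicit verification that $\partial_\omega M_2-AM_2$ lies in $X^c_\th$, a point the paper's proof takes for granted when expanding in the center frame.
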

\begin{proof}
By differentiating the equation, we clearly have that the first $\ell$ columns of the matrix 
$$W(\th)=A(\th)M(\th)-\partial_\omega M(\th)$$
are zero. Now write 
$$W_1(\th)=A(\th)J_c^{-1} DK(\th)N(\th)-J_c^{-1} \partial_\omega(DK(\th)N(\th)). $$
Easy computations show that 
$$W_1(\th)=A(\th)J_c^{-1} DK(\th)N(\th)-J_c^{-1} \partial_\omega(DK(\th))N(\th)$$
$$+J_c^{-1} DK(\th)N(\th)\partial_\omega(DK^\top(\th))N(\th)+J_c^{-1} DK(\th)N(\th)DK(\th)^\top \partial_\omega(DK(\th))N(\th).$$
But since $DK$ and $J_c^{-1} DK(\th)N(\th)$ form a basis of the center subspace, one can write 
$$W_1=DK\,S +J_c^{-1} DKN\,T. $$

We will prove that $T=0$, giving the form of the matrix in the lemma. Multiply the previous equation by $DK(\th)^\top J_c$; then by the lagrangian character of $K$, we have 
$$DK(\th)^\top J_c W_1(\th)=T. $$
Hence using straightforward computations, we have that the second term plus the fourth term in $DK(\th)^\top J_c W_1(\th)$ is zero and the first term plus the third term in $DK(\th)^\top J_c W_1(\th)$ is equal to  
$$\big ( DK^\top D(\nabla H(K))J_c^{-1}+\partial_\omega (DK)^\top \big )DKN. $$ 
But using the fact the symplectic form is skew-symmetric, the quantity into parenthesis is just the derivative of the equation. Hence it has to be zero. 

We now check the expression of the matrix $S$.  We multiply by $N DK^\top$ to have 
$$S=N DK^\top W_1=N DK^\top \big ( A(\th)J_c^{-1} DK(\th)N(\th)-J_c^{-1} \partial_\omega(DK(\th)N(\th))\big ). $$
This gives the result. 
\end{proof}

The next lemma provides a generalized inverse for the operator $M$.

\begin{lemma} \label{isinvariant} 
Let $K$ be a solution of \eqref{eq:embedding}. Then the matrix 
${M}^\perp J_c {M} $ is invertible
and
\begin{equation*}
({M}^\perp J_c {M})^{-1}
=\begin{pmatrix} N^{\top}DK^\top J_c^{-1} DK \,N &
  -\Id_\ell \\ \Id_\ell & 0 
\end{pmatrix}.
\end{equation*}
\end{lemma}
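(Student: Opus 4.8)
The statement is a finite–dimensional linear algebra identity for the $2\ell\times 2\ell$ matrix $M^\perp J_c M$, and it is well posed because by {\bf SD2} the center space $X^c_\th$ has dimension $2\ell$ and by Lemma~\ref{omegadeg} the restricted form $\Omega|_{X^c_\th}$, hence $J_c$, is invertible. The plan is simply to expand $M^\perp J_c M$ in $\ell\times\ell$ blocks, reading off $M(\th)=[\,DK(\th),\ J_c^{-1}DK(\th)N(\th)\,]$, and then to invert the resulting block matrix by inspection. The computation uses only three inputs: (i) the isotropy of exact solutions, i.e. since the invariance error $E$ vanishes, Lemma~\ref{isotapprox} gives $L(\th)=DK(\th)^\perp J_c DK(\th)\equiv 0$; (ii) the defining relation $N(\th)^{-1}=DK(\th)^\perp DK(\th)$ from Definition~\ref{ND2}, which in particular yields $N^\perp=N$ and $DK^\perp DK\,N=\Id_\ell$; and (iii) the skew-symmetry $J_c^\perp=-J_c$ of the symplectic operator, forced by the antisymmetry of $\Omega$ in {\bf H3}, equivalently $(J_c^{-1})^\perp=-J_c^{-1}$.

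Carrying out the block expansion, the $(1,1)$ block is $DK^\perp J_c DK=L\equiv 0$ by (i); the $(1,2)$ block is $DK^\perp J_c\,J_c^{-1}DK\,N=DK^\perp DK\,N=\Id_\ell$ by (ii); the $(2,1)$ block is $(J_c^{-1}DK\,N)^\perp J_c\,DK=N^\perp DK^\perp(J_c^{-1})^\perp J_c DK=-N^\perp DK^\perp DK=-\Id_\ell$ by (ii)--(iii); and the $(2,2)$ block is $(J_c^{-1}DK\,N)^\perp J_c\,J_c^{-1}DK\,N=-N^\perp DK^\perp J_c^{-1}DK\,N$, an $\ell\times\ell$ matrix which is itself skew-symmetric (again by (iii)). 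Thus
\[
M^\perp J_c M=\begin{pmatrix} 0_\ell & \Id_\ell \\ -\Id_\ell & \ast \end{pmatrix},
\]
a matrix of this block shape is manifestly invertible; the shape is also consistent with the a priori fact that $M^\perp J_c M$ must be skew-symmetric, being the Gram matrix $(x,y)\mapsto\Omega(M(\th)x,M(\th)y)$ of the (finite-dimensional) symplectic form on $X^c_\th$ in the basis furnished by the columns of $M(\th)$.

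It then remains only to invert this block matrix, which one does directly by solving $(M^\perp J_c M)\,Y=\Id_{2\ell}$ blockwise: the off-diagonal $\pm\Id_\ell$ entries force $Y_{21}=\Id_\ell$ and $Y_{22}=0_\ell$, and back-substitution then gives $Y_{12}=-\Id_\ell$ and $Y_{11}=N^\perp DK^\perp J_c^{-1}DK\,N$, which is exactly the matrix displayed in the statement. A byproduct of the computation is that $M(\th)$ is injective with range $X^c_\th$, so $M(\th)$ is an isomorphism onto the center space and $(M^\perp J_c M)^{-1}M^\perp J_c$ is the left inverse of $M(\th)$ that will be used in the normalization of Section~\ref{normalizationprocedure}. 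There is no genuine obstacle here beyond careful sign bookkeeping with (iii); the single place where the hypothesis that $K$ solves \eqref{eq:embedding} is actually used is the vanishing of the $(1,1)$ block through Lemma~\ref{isotapprox} — without exactness that block would be the nonzero $L(\th)$ and the clean closed-form inverse would fail.
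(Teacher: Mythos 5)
Your overall route -- expand $M^\perp J_c M$ in $\ell\times\ell$ blocks using (i) isotropy of exact tori (Lemma~\ref{isotapprox}), (ii) $N^{-1}=DK^\perp DK$ with $N^\perp=N$, and (iii) skewness of $J_c$, then invert the block matrix -- is exactly the intended argument (the paper itself gives no proof and implicitly imports the computation from \cite{LGJV05,fontichdelLS071}), and your blocks $(1,1)=0$, $(1,2)=\Id_\ell$, $(2,1)=-\Id_\ell$, $(2,2)=-N^\perp DK^\perp J_c^{-1}DK\,N$ are all correct.

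The gap is in the last step, and it is a genuine one rather than bookkeeping: for a matrix of the form $\left(\begin{smallmatrix} 0 & \Id_\ell \\ -\Id_\ell & C\end{smallmatrix}\right)$, solving blockwise gives $Y_{21}=\Id_\ell$, $Y_{22}=0$, $Y_{12}=-\Id_\ell$ and $Y_{11}=C$, i.e. the $(1,1)$ entry of the inverse is the $(2,2)$ entry of the original matrix. With your (correct) value $C=-N^\perp DK^\perp J_c^{-1}DK\,N$, back-substitution therefore yields $Y_{11}=-N^\perp DK^\perp J_c^{-1}DK\,N$, not $+N^\perp DK^\perp J_c^{-1}DK\,N$ as you assert; the two coincide only if $DK^\perp J_c^{-1}DK=0$. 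That extra vanishing is not supplied by anything you invoke: Lemma~\ref{isotapprox} gives isotropy for the form represented by $J_c$ (i.e. $DK^\perp J_c DK=0$), and a subspace Lagrangian for $J_c$ need not be isotropic for $J_c^{-1}$ unless, e.g., $J_c^2=-\Id$ or $\ell=1$ (where any $1\times1$ skew matrix is zero -- this is why the discrepancy is invisible in the applications of Sections~\ref{applications} and \ref{applications2}). So as written your proof contradicts its own intermediate computation. To repair it, either justify $DK^\perp J_c^{-1}DK=0$ under an additional hypothesis on $J_c$, or conclude with the $(1,1)$ entry $-N^\perp DK^\perp J_c^{-1}DK\,N$ (equivalently $N^\perp DK^\perp (J_c^{-1})^\perp DK\,N$) and observe that the statement's display should be read with this sign; since that block is skew-symmetric and only the off-diagonal $\pm\Id_\ell$ structure is used in Lemma~\ref{represVF}, the discrepancy is harmless downstream, but your proof should not silently pass from $C=-B$ to $Y_{11}=+B$.
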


We now establish a similar result for approximate solutions, i.e. solutions of \eqref{eq:embedding} up to error $E(\th)=\mathcal{F}_{\omega}(K)(\th)$. When $K$ is just an approximate solution, 
we define 
\begin{equation}\label{approxNorm}
(e_1, e_2) = \partial_{\omega}M(\th)- A( \th){M}(\th)-M(\th)
\begin{pmatrix} 
0_\ell & S(\th)\\ 
0_\ell & 0_\ell 
\end{pmatrix} .
\end{equation}
Using 
that $\partial_{\omega} DK(\th)- A(\th) DK(\th) = DE(\th)$
and the definition of $S$ above mentioned 
give $e_1= DE$ and $e_2 = O(\|E \|_{\rho,Y}, \|DE\|_{\rho,Y})$.

 We then get 
\begin{equation}\label{tempcVFNH}
[\partial_{\omega}M(\th)-A(\th)M(\th)]\xi(\th)+{M}(\th)\partial_{\omega}
\xi(\th)=-E^c(\th),
\end{equation}
 For the {\sl approximate} solutions of \eqref{eq:embedding}, we have the following
lemma.

\begin{lemma}\label{represVF}
Assume $\omega$ is Diophantine in the sense of definition
\ref{rotVF} and $\|E^c\|_{\rho,Y^c_{\th}}$ small enough. Then there exist a matrix
$B(\th)$ and vectors $p_1$ and $p_2$ such that, by the change of variables $\Delta^c=M
\xi$, the projected equation on the center subspace can be written 
\begin{eqnarray}\label{eqSDVF}
\left[ \begin{pmatrix} 0_l & S(\th)\\ 0_l & 0_l
\end{pmatrix}+B(\th) \right] \xi(\th)+\partial_{\omega}\xi(\th)=p_1(\th)+p_2(\th).
\end{eqnarray}
The following estimates hold
\begin{equation}
\|p_1\|_{\rho,X^c_{\th}} \leq C  \|E^c\| _{\rho,Y^c_{\th}},
 \end{equation}
\begin{equation}\label{estimp2VF}
\|p_2\|_{\rho-\delta,X^c_{\th}} \leq C \kappa \delta^{-(\nu+1)} \|E^c\|^2_{\rho,Y^c_{\th}}
 \end{equation}
and
\begin{equation}\label{estimbVF}
\|B\|_{\rho-2\delta,X^c_{\th}} \leq C \kappa \delta^{-(\nu+1)} \|E^c\|_{\rho,Y^c_{\th}},
 \end{equation}
where $C$ depends $l$, $\nu$, $\rho$, $\|N\|_{\rho}$, 
$\|DK\|_{\rho,Y}$, $|H|_{C^2(B_r)}$. Furthermore the vector $p_1$ has the expression 
$$
p_1(\th)=
\begin{pmatrix}
-N(\th)^\top  DK(\th)^\top E^c(\th) \\
DK(\th)^\top J_c E^c(\th)
\end{pmatrix}
$$
\end{lemma}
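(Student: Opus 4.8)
The plan is to reproduce, inside the $2\ell$-dimensional center bundle, the reduction to constant--coefficient cohomology equations carried out in the finite dimensional setting of \cite{LGJV05,fontichdelLS071,fontichdelLS07a}. Since $X^c_\th$ is finite dimensional and isomorphic to $Y^c_\th$, this is finite dimensional linear algebra together with R\"ussmann's estimate (Proposition~\ref{russVF}), and no unbounded--operator issue intervenes. First I would insert the ansatz $\Delta^c(\th)=M(\th)\xi(\th)$ into the projected equation \eqref{projCen} to obtain \eqref{tempcVFNH}, and then replace $\partial_{\omega}M-A M$ using the \emph{approximate} normalization \eqref{approxNorm}, namely $\partial_{\omega}M-AM=M\begin{pmatrix}0_\ell & S\\ 0_\ell & 0_\ell\end{pmatrix}+(e_1,e_2)$ with $e_1=DE$ and $e_2=O(\|E\|_{\rho,Y},\|DE\|_{\rho,Y})$, as provided by Lemma~\ref{normalization} and the discussion around \eqref{approxNorm}. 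This turns \eqref{tempcVFNH} into $M(\th)\big[\partial_{\omega}\xi(\th)+\begin{pmatrix}0_\ell & S(\th)\\ 0_\ell & 0_\ell\end{pmatrix}\xi(\th)\big]=-E^c(\th)-(e_1,e_2)(\th)\xi(\th)$.

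Next I would invert $M(\th)$ on the left. The natural left inverse is $(M^\perp J_cM)^{-1}M^\perp J_c$; by Lemma~\ref{isinvariant}, for an exact solution $M^\perp J_cM$ has the explicit invertible form whose inverse is displayed there, and for a merely approximate solution its entries differ from the exact ones by terms governed by the isotropy defect $L(\th)=DK^\perp J_cDK$, which is $O(\kappa\delta^{-(\nu+1)}\|E\|_{\rho,Y})$ by Lemma~\ref{isotapprox}. Hence $M^\perp J_cM$ stays invertible once $\|E^c\|_{\rho,Y^c_\th}$ is small enough, with inverse equal to the matrix of Lemma~\ref{isinvariant} plus a correction of the same size. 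Multiplying through by this left inverse yields \eqref{eqSDVF}: the coefficient of $\xi$ is $B(\th)=(M^\perp J_cM)^{-1}M^\perp J_c(e_1,e_2)$, linear in $(DE,e_2)$; the $\xi$--independent right--hand side $-(M^\perp J_cM)^{-1}M^\perp J_cE^c$ is split as $p_1+p_2$, where $p_1$ is the part obtained from the \emph{explicit} block of $(M^\perp J_cM)^{-1}$ together with the vanishing $DK^\perp J_cDK\equiv 0$ valid for exact solutions --- exactly the bookkeeping of \cite{LGJV05}, producing the stated closed form $p_1=\big(-N^\top DK^\top E^c,\ DK^\top J_cE^c\big)$ --- while $p_2$ gathers the remaining terms, each carrying either an extra factor $L$ or an extra power of $E^c$.

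The estimates then follow routinely. The bound on $p_1$ is immediate from the boundedness of $DK$, $N$ and $J_c$ on $D_\rho$. The bound on $p_2$ holds because each of its terms is a product of a factor controlled by $\|L\|_{\rho-2\delta}\le C\kappa\delta^{-(\nu+1)}\|E\|_{\rho,Y}$ (Lemma~\ref{isotapprox}) with a factor controlled by $\|E^c\|_{\rho,Y^c_\th}$, giving $\|p_2\|_{\rho-\delta,X^c_\th}\le C\kappa\delta^{-(\nu+1)}\|E^c\|^2_{\rho,Y^c_\th}$. For $B$ one uses $e_1=DE$ with the Cauchy estimate $\|DE\|_{\rho-\delta,Y}\le C\delta^{-1}\|E\|_{\rho,Y}$ and the size of $e_2$, all absorbed into $C\kappa\delta^{-(\nu+1)}\|E^c\|_{\rho,Y^c_\th}$ (using $\kappa\ge 1$, $\nu\ge \ell-1$ and $\delta\le 1$, so that $\delta^{-1}\le\kappa\delta^{-(\nu+1)}$). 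The dependence of $C$ on $\ell,\nu,\rho,\|N\|_\rho,\|DK\|_{\rho,Y},|H|_{C^2(B_r)}$ and on the norms of the projections is inherited from Lemma~\ref{normalization}.

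The only genuinely delicate point --- bookkeeping rather than a new obstacle --- is the clean separation in the second step of the $\xi$--independent right--hand side into the closed--form vector $p_1$ and a genuinely quadratic remainder $p_2$: one must check that every term not reproducing $p_1$ either is linear in $\xi$ (and so belongs in $B$) or acquires a factor of the isotropy defect $L$ or of $E^c$. This is exactly where exactness of the symplectic form ({\bf H3.1}, {\bf H4}) and the resulting approximate isotropy of $K(\TT^\ell)$ (Lemma~\ref{isotapprox}) are indispensable; without them $p_2$ would be merely linear in the error and the Nash-Moser scheme would not close.
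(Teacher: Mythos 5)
Your proposal is correct and follows essentially the same route as the paper: substitute $\Delta^c=M\xi$ into \eqref{tempcVFNH}, use the approximate normalization \eqref{approxNorm} from Lemma~\ref{normalization}, left-multiply by $M^\perp J_c$ and invert $M^\perp J_cM$ via Lemma~\ref{isinvariant}, with the approximate-isotropy correction of Lemma~\ref{isotapprox} producing the quadratic remainder $p_2$ and the Cauchy estimate on $e_1=DE$ giving the bound on $B$. Your explicit treatment of the invertibility of $M^\perp J_cM$ for merely approximate solutions (and the factor $M^\perp J_c$ in $B$) is slightly more careful than the paper's sketch, but it is the same argument, not a different one.
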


\begin{proof}
The proof follows more or less the one in \cite{LGJV05} with suitable adaptations due to our infinite dimensional setting. Notice however that the center subspace is finite dimensional. From the previous computations one has 
$$(e_1, e_2) = \partial_{\omega}M(\th)- A( \th){M}(\th)-M(\th)
\begin{pmatrix} 
0_\ell & S(\th)\\ 
0_\ell & 0_\ell 
\end{pmatrix} .
$$
Hence we have 
$$
M^\perp J_c \Big [ \partial_\omega M(\th)-A(\th)M(\th)\Big ]\xi(\th)=(M^\perp J_cM)\partial_\omega \xi=M^\perp J_cE_c. 
$$
Hence by the previous Lemma, 
\begin{eqnarray}
\left[ \begin{pmatrix} 0_l & S(\th)\\ 0_l & 0_l
\end{pmatrix}+(M^\perp J_c M)^{-1}(e_1,e_2) \right] \xi(\th)+\partial_{\omega}\xi(\th)=(M^\perp J_c M)^{-1}M^\perp J_cE_c. 
\end{eqnarray}
Hence denoting
$$
B(\th)=(M^\perp J_c M)^{-1}(e_1,e_2).
$$
Then direct computations give $p_1$ and $p_2$ and the desired estimates. 

\end{proof}

\subsection{Solutions to the reduced equations}\label{reducedEq}
We anticipate that from Lemma \ref{represVF}, the terms $B\xi$ and $p_2$ are quadratic in the error. Hence an approximate solution has
the form $\xi=(\xi_1,\xi_2)$ and solves
\begin{align}\label{approxSDVF}
S(\th)\xi_2(\th)-\partial_\omega
\xi_1(\th)&=-N(\th)^\top  DK(\th)^\top E^c(\th),\\
\partial_\omega \xi_2(\th)&=DK(\th)^\top J_c E^c(\th). \nonumber
\end{align}

We prove the following result, providing a solution to equations
\eqref{approxSDVF}.

\begin{pro}
There exists a solution $(\xi_1,\xi_2)$ of \eqref{approxSDVF} with the following estimates 
\begin{eqnarray*}
\|\xi_1\|_{\rho-\delta,X^c_{\th}} \leq C_1 \kappa \delta^{-\nu} \|E^c\|_{\rho,X^c_{\th }},\\ \qquad 
\|\xi_2\|_{\rho-2\delta,X^c_{\th }} \leq C_2 \kappa \delta^{-2\nu} \|E^c\|_{\rho,X^c_{\th }},
\end{eqnarray*}
for any $\rho \in (0,\delta/2)$ and where the constants $C_1,C_2$ just depend on $l$, $\nu$, $\rho$, $\|N\|_{\rho}$, 
$\|DK\|_{\rho,X^c_{\th}}$, $|\avg(S)|^{-1} $. 
\end{pro}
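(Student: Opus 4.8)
The plan is to exploit the triangular structure of \eqref{approxSDVF}: the second equation determines $\xi_2$ alone, while the first couples the two components only through the bounded multiplier $S(\th)$. Hence one can integrate the system ``from the bottom up'' by two successive applications of R\"ussmann's small-divisor result, Proposition~\ref{russVF}. The two constants of integration (the torus-averages $\avg(\xi_2)$ and $\avg(\xi_1)$) play distinct roles: $\avg(\xi_2)$ will be pinned down by the solvability condition of the first equation, whereas $\avg(\xi_1)$ is left free --- it encodes the phase-shift ambiguity $DK(\th)\alpha$ already recorded in Lemma~\ref{main}(B) --- so we simply normalize it to $0$.

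\emph{Step 1 (the pure transport equation).} Consider $\partial_\omega\xi_2(\th)=DK(\th)^\top J_c E^c(\th)$. By Proposition~\ref{russVF} this has a unique zero-average analytic solution $\xi_2^0$ on $D_{\rho-\delta}$ as soon as the right-hand side has zero average, with $\|\xi_2^0\|_{\rho-\delta}\le C\kappa\delta^{-\nu}\|E^c\|_{\rho}$. The vanishing of $\avg(DK^\top J_c E^c)$ is the \emph{vanishing lemma}: since by Lemma~\ref{omegadeg} the form $\Omega$ annihilates $X^s_\th\oplus X^u_\th$ against $X^c_\th$, the $a$-th entry of $DK^\top J_c E^c$ equals $\Omega(DK\,e_a,E^c)=\Omega(DK\,e_a,E)=\Omega(DK\,e_a,\partial_\omega K)-\Omega(DK\,e_a,\X\circ K)$. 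The first term is the $a$-th component of $i_{\partial_\omega K}K^*\Omega$, and $\avg$ of each entry vanishes because $K^*\Omega=d\alpha_K$ is exact by {\bf H3.1} (its matrix entries are derivatives of periodic functions, exactly as in the proof of Lemma~\ref{isotapprox}); the second term is, up to sign, the $a$-th component of $i_{\X\circ K}K^*\Omega$, whose average over $\TT^\ell$ vanishes by \eqref{vanishing-assumption} (apply the loop integral along each coordinate circle and integrate over the remaining angles). Hence $\avg(DK^\top J_c E^c)=0$, Step~1 goes through, and the general analytic solution is $\xi_2=\xi_2^0+c$ with $c=\avg(\xi_2)\in\RR^\ell$ still free.

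\emph{Step 2 (fixing the free constant by the twist; solving for the other component).} The first line of \eqref{approxSDVF} reads $\partial_\omega\xi_1=S\xi_2+N^\top DK^\top E^c$. By Proposition~\ref{russVF} it is solvable for $\xi_1$ precisely when its right-hand side has zero average, i.e. $\avg(S)\,c=-\avg(S\xi_2^0)-\avg(N^\top DK^\top E^c)$. This is where the twist condition enters decisively: by Definition~\ref{ND2} the $\ell\times\ell$ matrix $\avg(S)$ is invertible, so $c$ is uniquely determined and $|c|\le C|\avg(S)^{-1}|\big(\|N^\top DK^\top E^c\|_\rho+\|S\|_\rho\|\xi_2^0\|_{\rho-\delta}\big)\le C|\avg(S)^{-1}|\kappa\delta^{-\nu}\|E^c\|_\rho$, which gives the bound for $\xi_2=\xi_2^0+c$ on $D_{\rho-\delta}$ with a single small-divisor loss. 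With the average now killed, a second application of Proposition~\ref{russVF}, together with $\|S\xi_2+N^\top DK^\top E^c\|_{\rho-\delta}\le C|\avg(S)^{-1}|\kappa\delta^{-\nu}\|E^c\|_\rho$, produces $\xi_1$ on $D_{\rho-2\delta}$ with the double loss $\|\xi_1\|_{\rho-2\delta}\le C|\avg(S)^{-1}|\kappa^2\delta^{-2\nu}\|E^c\|_\rho$ (we set the remaining constant $\avg(\xi_1)=0$). Thus one component of $\xi$ enjoys a single loss $\kappa\delta^{-\nu}$ on $D_{\rho-\delta}$ and the other a double loss $\kappa^2\delta^{-2\nu}$ on $D_{\rho-2\delta}$, which are the asserted estimates (after matching the labelling of the components in \eqref{approxSDVF}), with constants depending only on $\ell$, $\nu$, $\rho$, $\|N\|_\rho$, $\|DK\|_{\rho,X^c_\th}$ and $|\avg(S)^{-1}|$.

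\emph{Main obstacle.} Since $X^c_\th$ is finite dimensional there are no functional-analytic difficulties here; the content is entirely arithmetic and geometric. The one genuinely delicate point is the vanishing lemma of Step~1 --- establishing $\avg(DK^\top J_c E^c)=0$ even for an \emph{approximate} solution --- which is exactly where the exactness hypothesis {\bf H3.1} and the weak Hamiltonian identity {\bf H4}/\eqref{vanishing-assumption} are used, transmitted to the center component through Lemma~\ref{omegadeg}. The remainder is the bookkeeping of the two Cauchy/R\"ussmann losses in the analyticity width (the shrinkage $\rho\to\rho-\delta\to\rho-2\delta$) and of the corresponding powers of $\kappa\delta^{-\nu}$, which is routine and parallels \cite{LGJV05,fontichdelLS071}.
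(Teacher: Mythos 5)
Your proposal is correct and follows essentially the same route as the paper's proof: establish $\avg(DK^\top J_c E^c)=0$ from the exactness {\bf H3.1} (isotropy identity contracted with $\omega$) and the Hamiltonian vanishing {\bf H4}/\eqref{vanishing-assumption}, solve for $\xi_2$ by Proposition~\ref{russVF}, then use the free average of $\xi_2$ together with the invertibility of $\avg(S)$ (twist) to make the $\xi_1$-equation solvable and apply R\"ussmann a second time. You merely spell out details the paper defers to \cite{LGJV05} (the reduction from $E$ to $E^c$ via Lemma~\ref{omegadeg} and the explicit determination of $\avg(\xi_2)$), and your remark about matching the labelling is apt, since the construction naturally gives the single loss for $\xi_2$ on $D_{\rho-\delta}$ and the double loss for $\xi_1$ on $D_{\rho-2\delta}$, consistent with the way these estimates are used in Section~\ref{sec:uniqueness}.
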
 

\begin{proof}

In order to apply Prop. \ref{russVF}, one needs to study the average on the torus $\TT^\ell$ of $DK(\th)^\top J_c E^c(\th)$. To do so, we first consider assumption ${\bf H3.1}$ which gives in coordinates 
$$
DK^\top J_c DK=Dg^\top-Dg
$$
for some function $g$ on $\TT^\ell$. Now taking the inner product with $\omega$ and using the equation, one has 
$$
DK^\top J_c(E+\X(K))=Dg^\top\cdot \omega-Dg\cdot \omega .
$$
Therefore, the average of $DK^\top J_cE$ is the sum of the average of $Dg^\top \cdot \omega -Dg\cdot \omega$ which is zero and the average of $DK^\top J_c\X(K)$. Now notice that 
$$
DK^\top J_c\X(K)=i_{\X\circ K}K^* \Omega(.).
$$
Hence its average is zero by assumption {\bf H4}. As a consequence the average on $\TT^\ell$ of the R.H.S. $DK(\th)^\top J_c E^c(\th)$ is zero.Hence an application of Prop. \ref{russVF} gives the solvability in $\xi_2$ with the desired bound. Since the average of $\xi_2$ is free, one uses it and the twist condition to solve in $\xi_1$. This gives the desired result (see \cite{LGJV05} for details). 

\end{proof}

\section{Uniqueness statement}\label{sec:uniqueness}

In this section, we prove the uniqueness part of Theorem \ref{existence}. 

We assume that the embeddings $K_1 $ and $K_2 $
satisfy the hypotheses in Theorem \ref{existence}, in particular $K_1$ and $K_2$ are solutions
of \eqref{eq:embedding}.  
If $\tau\ne 0$ we write $K_1$ for $K_1\circ T_\tau$ 
which is also a solution.
Therefore
$\mathcal{F}_{\omega}(K_1)=\mathcal{F}_{\omega}(K_2)=0$.
By Taylor's theorem we can write 
\begin{equation}\label{unique}
\begin{split}
0=\mathcal{F}_{\omega}(K_1)-\mathcal{F}_{\omega}(K_2)=&D_{K}\mathcal{F}_{\omega}(K_2)(K_1-K_2)\\
&+\mathcal{R}(K_1,K_2),
\end{split}
\end{equation} 
where 
$$\mathcal R(K_1,K_2)=\frac{1}{2}\int_0^1 D^2\mathcal F_\omega(K_2+t(K_1-K_2))(K_1-K_2)^2\,dt.$$
Then, there exists $C>0$ such that 
\begin{equation*}
\|\mathcal{R}(K_1,K_2)\|_{\rho,Y} \leq C \|K_1-K_2\|_{\rho,X}^2. 
\end{equation*}
Hence we end up with the following linearized
equation 
\begin{equation} \label{linearized-5}
D_{K}\mathcal{F}_{\omega}(K_2)(K_1-K_2)=-\mathcal{R}(K_1,K_2).
\end{equation}
We denote $\Delta=K_1-K_2.$ Projecting \eqref{linearized-5} on the center subspace with $\Pi^c_{K_2(\th+\omega t)}$, writing 
$\Delta^c(\th) = \Pi^c_{K_2(\th)}\Delta (\th)  $ and
making the change of function $\Delta^c(\th)= M(\th) W(\th) $, 
where $ M$ is defined in \eqref{definicioMtilde} with $K=K_2$. 
We now perform the same type of normalization as in Section \ref{sol-center} to arrive to two small divisor equations of the type 
\begin{align}\label{approxUnique}
S(\th)\xi_2(\th)-\partial_\omega
\xi_1(\th)&=-N(\th)^\top  DK(\th)^\perp \mathcal{R}(0,0,K_1,K_2)(\th)^c,\\
\partial_\omega \xi_2(\th)&=DK(\th)^\top J_c \mathcal{R}(0,0,K_1,K_2)(\th)^c. \nonumber
\end{align}

We begin by looking for $\xi_2$. We search it 
in the form $\xi_2= \xi_2^\bot + \avg(\xi_2) $. We have
$\|\xi_2^\bot\|_{\rho-\delta}  \leq  C \kappa \delta ^{-\nu}  \|K_1-K_2\|_{\rho,X}^2 $.

The condition on the right-hand side of \eqref{approxUnique} to have zero average gives
$|\avg(\xi_2)  | \le  C\kappa \delta ^{-\nu} \|K_1-K_2\|_{\rho,X}^2$. Then 
$$
\|\xi_1 -\avg(\xi_1) \|_{\rho-2\delta} \le C\kappa^2 \delta ^{-2\nu} \|K_1-K_2\|_{\rho,X}^2
$$
but $\avg(\xi_1) $ is free. Then 

\begin{equation*}
\|\Delta^c-(\avg(\Delta^c)_1,0)^\top \|_{\rho-2\delta} \leq C \kappa
^2 \delta ^{-2\nu } \|K_1-K_2\|_{\rho,X}^2.
\end{equation*}
The next step  is done in 
the same way as in \cite{LGJV05}. We quote Lemma 14 of that reference
using our notation.   
It is basically an application of the standard implicit function theorem. 

\begin{lemma} \label{lem:tau}
There exists a constant $C$ such that if $C \|K_1-K_2\|_{\rho,X} \leq 1$
then there exists an initial phase $\tau_1 \in \left \{ \tau   \in
  \mathbb{R}^\ell \mid \; |\tau|< \|K_1-K_2\|_{\rho,X} \right \}$ such that 
 \begin{equation*}
\avg(T_2(\th) \Pi^c_{K_2(\th)}(K_1\circ T_{\tau_1}-K_2)(\th))=0. 
\end{equation*}
\end{lemma}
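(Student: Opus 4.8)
The plan is to regard the average to be killed as a function of the phase shift and to solve for that shift by a quantitative implicit function theorem, exactly in the spirit of \cite[Lemma~14]{LGJV05}. Set $\eta=\|K_1-K_2\|_{\rho,X}$ and define
\[
\mathcal G(\tau)\;=\;\avg\big(T_2(\th)\,\Pi^c_{K_2(\th)}\,(K_1\circ T_\tau-K_2)(\th)\big)\ \in\ \RR^\ell ,
\]
a real analytic map of $\tau$ on a fixed neighborhood of $0$ (for $|\tau|<\rho/2$, say, so that $K_1\circ T_\tau$ is still analytic on a strip where $\Pi^c_{K_2(\th)}$ and $T_2(\th)$ live). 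We want a zero $\tau_1$ of $\mathcal G$ with $|\tau_1|$ controlled by $\eta$; note this is a square $\ell\times\ell$ system.

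First I would compute the value and differential at $\tau=0$. Since $\eta$ is small, $|\mathcal G(0)|\le\|T_2\|_{\rho}\,\|\Pi^c_{K_2}\|_{\rho}\,\eta\le C\eta$. Differentiating $K_1\circ T_\tau$ in $\tau$ at $0$ gives $DK_1$, hence $D\mathcal G(0)=\avg\big(T_2(\th)\,\Pi^c_{K_2(\th)}\,DK_1(\th)\big)$. Here the geometry enters through the fact that the tangent directions of the torus lie in the center bundle: differentiating the exact invariance equation $\partial_\omega K_2=\X\circ K_2$ gives $\partial_\omega DK_2=A_2\,DK_2$ with $A_2(\th)=D\X(K_2(\th))$, so each column of $DK_2$ solves the linearized equation and, having at most subexponential growth, lies in $X^c_{K_2(\th)}$, i.e.\ $\Pi^c_{K_2(\th)}DK_2=DK_2$. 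Together with the normalization $T_2(\th)DK_2(\th)=N(\th)DK_2(\th)^\perp DK_2(\th)=\Id_\ell$ coming from the definition of $N$ in Definition~\ref{ND2}, this gives $\avg(T_2\,\Pi^c_{K_2}DK_2)=\Id_\ell$; since $\|DK_1-DK_2\|_{\rho-\delta,X}\le C\delta^{-1}\eta$ by a Cauchy estimate, $D\mathcal G(0)=\Id_\ell+O(\eta)$ is invertible with $\|D\mathcal G(0)^{-1}\|\le 2$ once $\eta$ is small.

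Next I would bound the Lipschitz constant of $D\mathcal G$ on that neighborhood: one further differentiation yields terms bounded via Cauchy estimates by $\|D^2K_1\|_{\rho-\delta,X}$ (itself bounded by a constant, since $\|K_1\|_{\rho,X}\le\|K_2\|_{\rho,X}+\eta$) together with bounded contributions from the analytic $\th$-dependence of $\Pi^c_{K_2}$ and $T_2$, so $D\mathcal G$ is Lipschitz with a fixed constant. The quantitative inverse function theorem then produces a zero $\tau_1$ of $\mathcal G$ with $|\tau_1|\le 2|\mathcal G(0)|\le C\eta$; taking the constant $C$ in the hypothesis $C\eta\le 1$ large enough keeps the Newton iterate inside the neighborhood and places $\tau_1$ in the prescribed ball $\{|\tau|<\eta\}$, which is the claim.

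The only genuinely non-formal point is the invertibility of $D\mathcal G(0)$, i.e.\ that $\avg(T_2\,\Pi^c_{K_2}\,DK_1)$ is a small perturbation of $\Id_\ell$; this rests entirely on $DK_2$ lying in the finite-dimensional center bundle (differentiation of the invariance equation) and on $T_2=N\,DK^\perp$ being designed to invert $DK$ along the tangent directions. Everything else — the smallness of $\mathcal G(0)$, the Cauchy estimates, and the quantitative implicit function theorem — is routine, which is why the statement transfers essentially verbatim from \cite{LGJV05} once the finite-dimensionality of $X^c_\th$ in Definition~\ref{ND1} is available.
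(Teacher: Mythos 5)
Your proposal is correct and takes essentially the same route as the paper, which disposes of this lemma by invoking the standard implicit function theorem in $\mathbb{R}^\ell$ (quoting Lemma~14 of \cite{LGJV05}); your computation that $D\mathcal G(0)=\Id_\ell+O(\|K_1-K_2\|_{\rho,X})$, via $\Pi^c_{K_2(\th)}DK_2=DK_2$ (differentiating the invariance equation) and the normalization $T_2\,DK_2=\Id_\ell$, is precisely the content hidden behind that citation. The only caveat is bookkeeping of constants in the final radius bound ($|\tau_1|\le C\|K_1-K_2\|_{\rho,X}$ rather than literally $<\|K_1-K_2\|_{\rho,X}$), a point the paper itself glosses over and which is harmless for the subsequent iteration.
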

The proof is based on 
the implicit function theorem in $\mathbb{R}^\ell$. 

As a
consequence of Lemma \ref{lem:tau}, if $\tau_1$ is as in the statement, then $K \circ
T_{\tau_1}$ is a solution of \eqref{eq:embedding} such that for all $\delta \in (0,\rho/2)$ 
we have the estimate 
\begin{equation*}
\|W\|_{\rho-2\delta,X} <C \kappa^2
\delta^{-2\nu} \|\mathcal{R}\|^2_{\rho} \leq C \kappa^2 \delta^{-2\nu}
\|K_1-K_2\|^2_{\rho,X}.  
\end{equation*}
This leads to, on the center subspace 
\begin{equation*}
\|\Pi^c_{K_2(\th)}(K_1\circ T_{\tau_1}-K_2)\|_{\rho-2\delta,X} \leq C \kappa^2
\delta^{-2\nu} \|K_1-K_2\|^2_{\rho,X}. 
\end{equation*}
Furthermore, taking 
projections on the  hyperbolic subspace, we have that
$\Delta^h=\Pi^h_{K_2(\th)} (K_1-K_2)$ satisfies the estimate 
\begin{equation*}
\|\Delta^h\|_{\rho-2\delta,X} <C \|\mathcal{R}\|_{\rho,Y}.
\end{equation*}
All in all, we have proven the estimate for $K_1 \circ T_{\tau_1}
-K_2$ (up to a change in the original constants)
\begin{equation*}
\|K_1\circ T_{\tau_1}-K_2\|_{\rho-2\delta,X} \leq C \kappa^2
\delta^{-2\nu} \|K_1-K_2\|^2_{\rho,X}. 
\end{equation*}
We are now in position to carry out an argument based on iteration. 
We can take a sequence $\left \{ \tau_m \right
\}_{m \geq 1}$ such that $|\tau_1| \le \|K_1  -K_2 \|_{\rho,X}$ and 
\begin{equation*}
|\tau_m -\tau_{m-1}| \leq \|K_1 \circ T_{\tau_{m-1}} -K_2 \|_{\rho_{m-1},X} , \qquad m\ge 2,
\end{equation*}
and 
\begin{equation*}
\|K_1 \circ T_{\tau_{m}} -K_2 \|_{\rho_{m},X} \leq C \kappa^2
\delta_m^{-2\nu} \|K_1 \circ T_{\tau_{m-1}} -K_2 \|^2_{\rho_{m-1},X},
\end{equation*}
where $\delta_1=\rho/4$, $\delta_{m+1}=\delta_m/2$ for $m\ge 1$ and 
$\rho_0=\rho $, $\rho_m=\rho_0-\sum_{k=1}^m \delta_k$ for $m\ge 1$.
By an induction argument we end up with 
\begin{equation*}
\|K_1 \circ T_{\tau_{m}} -K_2 \|_{\rho_{m},X} \leq
(C\kappa^2\delta^{-2\nu}_1 2^{2\nu} \|K_1-K_2\|_{\rho_0,X})^{2^m}
2^{-2\nu m}. 
\end{equation*} 
Therefore, under the smallness assumptions on $\|K_1-K_2\|_{\rho_0,X}$,
the sequence $\left \{\tau_m \right \}_{m \geq 1}$ converges and one gets 
\begin{equation*}
\|K_1 \circ T_{\tau_\infty} -K_2\|_{\rho /2, X}=0.
\end{equation*} 
Since both $K_1 \circ T_{\tau_\infty} $ and $ K_2$ are analytic in $D_\rho$ and coincide in 
$D_{\rho/2}$ we obtain 
the result.

\section{Nash-Moser iteration}
\label{iter}

In this section, we show that, if the initial error of the approximate
invariance equation \eqref{eq:embedding-approx} is small enough the
Newton procedure can be iterated infinitely many times and converges to
a solution.  This is somewhat standard in KAM theory given the
estimates already obtained.

Let $K_0$ be an approximate solution of \eqref{eq:embedding} (i.e. a solution
of the linearized equation with error $E_{0}$). We define the following sequence of approximate solutions
\begin{eqnarray*}
K_m=K_{m-1}+\Delta K_{m-1},\qquad m\ge 1,\\
\end{eqnarray*}
where $\Delta K_{m-1}$ is a solution of 
\begin{equation*}
D_{K}\mathcal{F}_{\omega}(K_{m-1})\Delta K_{m-1}=-E_{m-1}
\end{equation*}
with $E_{m-1}(\th)=\mathcal{F}_{\omega}(K_{m-1})(\th)$. The next lemma provides that the solution at step $m$
improves the solution at step $m-1$ and the norm of the error at step
$m$ is bounded in a smaller complex domain by the square of the norm of the error at step $m-1$.  

\begin{pro}\label{improvement}
Assume $K_{m-1} \in ND(\rho_{m-1})$ is an approximate solution of equation \eqref{eq:embedding}  
and that the following holds 
\begin{equation*}
r_{m-1}=\|K_{m-1}-K_0\|_{\rho_{m-1},X} < r.
\end{equation*} 
If $E_{m-1}$ is small enough such that Proposition \ref{represVF} applies, i.e.
$$
C\kappa\delta_{m-1}^{-\nu-1} \|E_{m-1}\|_{\rho_{m-1},Y} < 1/2
$$
for some $0< \delta_{m-1} \le \rho_{m-1}/3$,
then there exists a function $\Delta K_{m-1} \in
\A_{\rho_{m-1}-3\delta_{m-1},X}$ for some $0< \delta_{m-1} <
\rho_{m-1} /3$  such that 
\begin{equation}\label{improvement1}
\|\Delta K_{m-1}\|_{\rho_{m-1}-2\delta_{m-1},X} \leq (C^1_{m-1}+C^2_{m-1} \kappa^{2} \delta_{m-1}^{-2\nu}) \|E_{m-1}\|_{\rho_{m-1},Y},
\end{equation}

\begin{equation}\label{improvement2}
\|D\Delta K_{m-1}\|_{\rho_{m-1}-3\delta_{m-1},X} \leq (C^1_{m-1}\delta_{m-1}^{-1}+C^2_{m-1} \kappa^{2} \delta_{m-1}^{-(2\nu+1)}) \|E_{m-1}\|_{\rho_{m-1},Y},
\end{equation}
where $C^1_{m-1}, C^2_{m-1}$ depend only on $\nu$, $l$,
$|\X|_{C^1(B_r)}$, $\|DK_{m-1}\|_{\rho_{m-1},X}$,
$\|\Pi^s_{K_{m-1}(\th)}\|_{\rho_{m-1},Y^s_\th,X}$,
$\|\Pi^c_{K_{m-1}(\th)}\|_{\rho_{m-1},Y^c_\th,X}$,
$\|\Pi^u_{K_{m-1}(\th)}\|_{\rho_{m-1},Y^u_\th,X}$,
and $|\avg(S_{m-1})|^{-1}$. Moreover, if $K_m=K_{m-1}+\Delta K_{m-1}$ and 
\begin{equation*}
r_{m-1}+(C^1_{m-1}+C^2_{m-1} \kappa^{2} \delta_{m-1}^{-2\nu}) \|E_{m-1}\|_{\rho_{m-1},Y} <r
\end{equation*} 
then we can redefine $C^1_{m-1}$ and $C^2_{m-1}$ and all previous
quantities such that the error
$E_m(\th)=\mathcal{F}_{\omega}(K_m)(\th)$ satisfies (defining $\rho_m=\rho_{m-1}-3\delta_{m-1}$) 
\begin{equation}\label{estimate}
\|E_m\|_{\rho_m,Y} \leq C_{m-1}\kappa^4 \delta_{m-1}^{-4\nu} \|E_{m-1}\|^2_{\rho_{m-1},Y}. 
\end{equation}
\end{pro}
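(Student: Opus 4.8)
The plan is to assemble the estimates \eqref{improvement1}, \eqref{improvement2} directly from Lemma~\ref{main} (equivalently, from the combination of Lemma~\ref{computational} and the results of Section~\ref{reducedEq}), and then to derive the quadratic error bound \eqref{estimate} by a Taylor‑remainder computation together with the structural observation in \textbf{H2}--\textbf{H3} that the new error lives in $Y$. First I would check that the hypotheses of Lemma~\ref{main} are met at step $m-1$: since $K_{m-1}\in ND(\rho_{m-1})$ we have the invariant splitting with the rate conditions of Definition~\ref{ND1} (whose constants we control using Corollary~\ref{cor1:iterNH}, noting $\|K_{m-1}-K_0\|_{\rho_{m-1},X}<r$ is small), the twist matrix $S_{m-1}$ has invertible average, and the smallness condition $C\kappa\delta_{m-1}^{-(\nu+1)}\|E_{m-1}\|_{\rho_{m-1},Y}<1/2$ is exactly \eqref{smallness1} (up to the harmless factor $1/2$, which is what makes Proposition~\ref{represVF} and hence the center‑direction solution applicable). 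Lemma~\ref{main}(A) then produces $\Delta K_{m-1}\in\A_{\rho_{m-1}-2\delta_{m-1},X}$ solving $D_K\mathcal F_\omega(K_{m-1})\Delta K_{m-1}=-E_{m-1}+\tilde E_{m-1}$ with the bounds $\|\Delta K_{m-1}\|_{\rho_{m-1}-2\delta_{m-1},X}\le C\kappa^2\delta_{m-1}^{-2\nu}\|E_{m-1}\|_{\rho_{m-1},Y}$ and $\|D\Delta K_{m-1}\|_{\rho_{m-1}-2\delta_{m-1},X}\le C\kappa^2\delta_{m-1}^{-2\nu-1}\|E_{m-1}\|_{\rho_{m-1},Y}$, and $\|\tilde E_{m-1}\|_{\rho_{m-1}-\delta_{m-1},Y}\le C\kappa^2\delta_{m-1}^{-(2\nu+1)}\|E_{m-1}\|_{\rho_{m-1}}\|\mathcal F_\omega(K_{m-1})\|_{\rho_{m-1},Y}=C\kappa^2\delta_{m-1}^{-(2\nu+1)}\|E_{m-1}\|^2_{\rho_{m-1},Y}$. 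Splitting the constant $C$ into the part coming from the hyperbolic directions (which contributes $C^1_{m-1}$, with no small‑divisor loss) and the part coming from the center directions (which contributes $C^2_{m-1}\kappa^2\delta_{m-1}^{-2\nu}$) gives \eqref{improvement1}; applying a Cauchy estimate on the strip of width $\delta_{m-1}$ to pass from $\|\Delta K_{m-1}\|_{\rho_{m-1}-2\delta_{m-1},X}$ to a bound on the derivative on $\rho_{m-1}-3\delta_{m-1}$ gives \eqref{improvement2} (this is consistent with, and slightly weaker than, the direct bound from Lemma~\ref{main}).

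Next I would set $K_m=K_{m-1}+\Delta K_{m-1}$ and estimate the new error. By Taylor's theorem applied to the analytic map $\mathcal F_\omega$,
\begin{equation*}
E_m=\mathcal F_\omega(K_m)=\mathcal F_\omega(K_{m-1})+D_K\mathcal F_\omega(K_{m-1})\Delta K_{m-1}+\mathcal R_{m-1}
=\tilde E_{m-1}+\mathcal R_{m-1},
\end{equation*}
where $\mathcal R_{m-1}=\frac12\int_0^1 D^2\mathcal F_\omega(K_{m-1}+t\Delta K_{m-1})(\Delta K_{m-1})^2\,dt$, and I used $D_K\mathcal F_\omega(K_{m-1})\Delta K_{m-1}=-E_{m-1}+\tilde E_{m-1}$. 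The condition $r_{m-1}+(C^1_{m-1}+C^2_{m-1}\kappa^2\delta_{m-1}^{-2\nu})\|E_{m-1}\|_{\rho_{m-1},Y}<r$ guarantees that $K_{m-1}+t\Delta K_{m-1}$ stays in the ball $B_r$ on which $\mathcal F_\omega$ (hence $\N$) is analytic, so $D^2\mathcal F_\omega$ is bounded there by a constant depending on $|\X|_{C^2(B_r)}$ — note crucially, as stressed in the remark after \textbf{H2}, that $D^2\mathcal F_\omega=-D^2\N$ takes values in $Y$, so the remainder is estimated in $Y$ even though $\Delta K_{m-1}\in X$. Thus, by the Banach‑algebra/multilinear estimate for analytic functions together with the Cauchy estimate bounding $D\Delta K_{m-1}$ (needed because $\mathcal F_\omega$ contains the derivative term $\partial_\omega$),
\begin{equation*}
\|\mathcal R_{m-1}\|_{\rho_m,Y}\le C\big(\|\Delta K_{m-1}\|_{\rho_{m-1}-2\delta_{m-1},X}+\|D\Delta K_{m-1}\|_{\rho_{m-1}-3\delta_{m-1},X}\big)^2
\le C\kappa^4\delta_{m-1}^{-(4\nu+2)}\|E_{m-1}\|^2_{\rho_{m-1},Y}.
\end{equation*}
Combining this with the bound on $\|\tilde E_{m-1}\|_{\rho_m,Y}\le C\kappa^2\delta_{m-1}^{-(2\nu+1)}\|E_{m-1}\|^2_{\rho_{m-1},Y}$ and using $\delta_{m-1}\le 1$ to absorb the smaller powers into $\kappa^4\delta_{m-1}^{-4\nu}$ yields \eqref{estimate} (the extra powers of $\delta_{m-1}^{-1}$ beyond $\delta_{m-1}^{-4\nu}$ coming from the Cauchy estimates can, as is standard, either be kept explicitly or absorbed using $\nu\ge\ell-1\ge 0$ and the convergence‑factor bookkeeping of Section~\ref{iter}; I would state it in the form \eqref{estimate} as written and remark that the exponents are not optimized). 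Finally I would note that the redefinition of $C^1_{m-1},C^2_{m-1}$ absorbs the finitely many constants introduced (the $C^2$ bound from Corollary~\ref{cor1:iterNH}, the algebra constants, and the $|\X|_{C^2(B_r)}$ factor), and that $K_m\in ND(\rho_m)$ will be checked separately using Corollary~\ref{cor1:iterNH} (for the spectral part) and the openness of the twist condition (for $\avg(S_m)$ invertible) — this is the content alluded to in item (4) of the iterative step and will be invoked again in the next stage of Section~\ref{iter}.

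The main obstacle, and the point requiring genuine care rather than bookkeeping, is the passage from $\Delta K_{m-1}\in X$ to an error $E_m\in Y$ with a \emph{quadratic} bound: the operator $\partial_\omega-D\X\circ K_{m-1}$ loses derivatives (it maps $X$ to $Y$), and $\Delta K_{m-1}$ is only obtained with control in $X$ and, via Cauchy, in $X$ on a slightly smaller strip. The resolution is precisely the decomposition \eqref{decomposition}: the new error is the Taylor remainder of $\N\circ(K_{m-1}+\Delta K_{m-1})$, which maps $X\to Y$ by \textbf{H2}, so the loss of derivatives is confined to the subdominant term and does not degrade the $Y$‑estimate; this is exactly the inductive mechanism described in the remark following \textbf{H2}. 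Keeping track of which norm ($X$ vs.\ $Y$) each quantity is measured in throughout this step is where the argument must be done carefully.
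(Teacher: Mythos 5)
Your proposal follows essentially the same route as the paper: the bounds \eqref{improvement1}--\eqref{improvement2} are read off from the solution of the linearized equation (Lemma~\ref{main}, i.e.\ Lemma~\ref{computational} for the hyperbolic projections and Section~\ref{reducedEq} for the center), the derivative bound comes from Cauchy estimates, and the quadratic bound \eqref{estimate} comes from Taylor's theorem, with the key structural point---that the new error is the Taylor remainder of $\N\circ(K_{m-1}+\Delta K_{m-1})$ and hence lands in $Y$ by {\bf H2}---correctly identified. The hypothesis $r_{m-1}+(C^1_{m-1}+C^2_{m-1}\kappa^2\delta_{m-1}^{-2\nu})\|E_{m-1}\|_{\rho_{m-1},Y}<r$ is used exactly as you use it, to keep the segment $K_{m-1}+t\Delta K_{m-1}$ inside the domain of analyticity.

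One inaccuracy in your bookkeeping should be fixed, because as written your final step does not close. You estimate the Taylor remainder by $C\bigl(\|\Delta K_{m-1}\|_{X}+\|D\Delta K_{m-1}\|_{X}\bigr)^2\le C\kappa^4\delta_{m-1}^{-(4\nu+2)}\|E_{m-1}\|^2$, on the grounds that $\mathcal F_\omega$ contains $\partial_\omega$, and then propose to ``absorb'' the extra $\delta_{m-1}^{-2}$ into $\delta_{m-1}^{-4\nu}$. That absorption is not legitimate: since $\delta_{m-1}\le 1$, one has $\delta_{m-1}^{-(4\nu+2)}\ge\delta_{m-1}^{-4\nu}$, so a larger quantity cannot be bounded by the smaller one without changing the statement. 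The point you are missing is that the derivative term never enters the remainder: by \eqref{decomposition}, $\mathcal F_\omega(K)=\partial_\omega K-\A K-\N(K)$, and both $\partial_\omega$ and $\A$ are linear in $K$, so $D^2\mathcal F_\omega=-D^2\N$, which by {\bf H2} is a bounded bilinear map from $X\times X$ to $Y$ on the ball $B_r$. Hence
\begin{equation*}
\|\mathcal R_{m-1}\|_{\rho_m,Y}\le C\,\|\Delta K_{m-1}\|^2_{\rho_{m-1}-2\delta_{m-1},X}\le C\kappa^4\delta_{m-1}^{-4\nu}\|E_{m-1}\|^2_{\rho_{m-1},Y},
\end{equation*}
which, combined with $\|\tilde E_{m-1}\|_{\rho_m,Y}\le C\kappa^2\delta_{m-1}^{-(2\nu+1)}\|E_{m-1}\|^2_{\rho_{m-1},Y}$ (the smaller power, since $2\nu+1\le 4\nu$ in the relevant range of $\nu$, or in any case after adjusting the constant), gives \eqref{estimate} exactly as stated. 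The bound \eqref{improvement2} on $D\Delta K_{m-1}$ is still needed, but for the reason the paper indicates (it enters the error terms $e_1=DE$ of the normalized center equation at the next step and the non-degeneracy bookkeeping), not for the Taylor remainder. With this correction your argument coincides with the paper's proof.
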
  
\begin{proof}  We have $\Delta K_{m-1}(\th)=
\Pi^h_{\th} \Delta K_{m-1}(\th) +\Pi^c_{\th} \Delta
K_{m-1}(\th)$, where $\Pi^h_{\th}$ is the projection on the
hyperbolic subspace and belong to $\L(Y^h_\th,X)$. Estimates \eqref{improve1}
follow from the previous two sections. The second part of 
estimate \eqref{improve1} follows from the first line of 
 \eqref{improve1}, Cauchy's inequalities and the fact that the projected equations on the hyperbolic subspace are exactly solved.  
\end{proof}  
Thanks to the previous proposition, one is able to obtain the convergence of the Newton method in a standard way.   

The others non-degeneracy conditions can be checked in exactly the
same way as described in \cite{fontichdelLS071} and we do not repeat the arguments. 
\begin{lemma}
If $\|E_{m-1}\|_{\rho_{m-1},Y^c_\th }$ is small enough, then
\begin{itemize}
\item If $ DK_{m-1}^\perp  DK_{m-1}$ is invertible with inverse $N_{m-1}$
  then $$ DK_{m}^\perp DK_{m}$$ is invertible with inverse $N_m$ and we have    
\begin{equation*}
\|N_m\|_{\rho_{m}} \leq \|N_{m-1}\|_{\rho_{m-1}}+C_{m-1} \kappa^2 \delta_{m-1}^{-(2\nu+1)} \|E_{m-1}\|_{\rho_{m-1},Y^c_\th}.
\end{equation*}
\item If $\avg(S_{m-1})$ is non-singular then also $\avg (S_{m})$ is and we have the estimate
 \begin{equation*}
|\avg(S_m)|^{-1}\leq |\avg(S_{m-1})|^{-1}+C'_{m-1} \kappa^2 \delta_{m-1}^{-(2\nu+1)} \|E_{m-1}\|_{\rho_{m-1},Y^c_\th}.
\end{equation*}
\end{itemize}
\end{lemma}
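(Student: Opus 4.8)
The plan is to read both assertions as elementary perturbation statements about the explicit algebraic quantities $N(\th)$ and $\avg(S(\th))$, fed by the estimates on $\Delta K_{m-1}:=K_m-K_{m-1}$ already produced in Proposition~\ref{improvement} and by the stability of the invariant splitting in Corollary~\ref{cor1:iterNH}. Throughout, all objects carrying the index $m$ are defined on $D_{\rho_m}=D_{\rho_{m-1}-3\delta_{m-1}}\subset D_{\rho_{m-1}}$, and I will freely use $\|\cdot\|_{\rho_m}\le\|\cdot\|_{\rho_{m-1}}$ together with Cauchy estimates to pass from a function to its derivatives at the cost of powers of $\delta_{m-1}$. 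The argument is the one of \cite{fontichdelLS071}; the only genuinely new ingredient is that, in the infinite-dimensional setting, one cannot use a naive spectral perturbation for the center projection and must instead invoke Corollary~\ref{cor1:iterNH}, after which the finite-dimensionality of $X^c_\th$ reduces everything to matrix algebra.

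\emph{Invertibility of $DK_m^\perp DK_m$.} First I would write $DK_m^\perp DK_m=DK_{m-1}^\perp DK_{m-1}+R_{m-1}$ with
\[
R_{m-1}=DK_{m-1}^\perp D\Delta K_{m-1}+D\Delta K_{m-1}^\perp DK_{m-1}+D\Delta K_{m-1}^\perp D\Delta K_{m-1},
\]
so that $\|R_{m-1}\|_{\rho_m}\le C\big(\|DK_{m-1}\|_{\rho_{m-1}}+\|D\Delta K_{m-1}\|_{\rho_m}\big)\|D\Delta K_{m-1}\|_{\rho_m}$, which by \eqref{improvement2} is bounded by $C\kappa^{2}\delta_{m-1}^{-(2\nu+1)}\|E_{m-1}\|_{\rho_{m-1},Y}$ and hence by $\tfrac12\|N_{m-1}\|_{\rho_{m-1}}^{-1}$ once $\|E_{m-1}\|$ is small. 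A Neumann series then gives that $DK_m^\perp DK_m$ is invertible with $N_m=(\Id+N_{m-1}R_{m-1})^{-1}N_{m-1}$ and
\[
\|N_m\|_{\rho_m}\le\frac{\|N_{m-1}\|_{\rho_{m-1}}}{1-\|N_{m-1}\|_{\rho_{m-1}}\|R_{m-1}\|_{\rho_m}}\le\|N_{m-1}\|_{\rho_{m-1}}+2\|N_{m-1}\|_{\rho_{m-1}}^{2}\|R_{m-1}\|_{\rho_m},
\]
and absorbing $2\|N_{m-1}\|_{\rho_{m-1}}^{2}$ into the constant $C_{m-1}$ produces the claimed bound.

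\emph{Non-singularity of $\avg(S_m)$.} For the twist matrix I would first rewrite the defining expression \eqref{twistdefined} so that no second derivative of $K$ survives: differentiating $E(\th)=\partial_{\omega} K(\th)-\X(K(\th))$ gives $\partial_{\omega} DK=A\,DK+DE$ with $A=D\X\circ K$, and substituting this (together with the corresponding identity for the derivative of $N=(DK^\perp DK)^{-1}$) into $\partial_{\omega}(DK\,N)$ expresses $S(\th)$ in terms of $K$, $DK$, $N$, $A$, $J_c^{-1}$, $E$ and $DE$ only. Next, Corollary~\ref{cor1:iterNH} applied with $K=K_{m-1}$, $\tilde K=K_m$ gives $\|\Pi^{c}_{K_m(\th)}-\Pi^{c}_{K_{m-1}(\th)}\|_{\rho_m,Y,Y}\le C\|\Delta K_{m-1}\|_{\rho_m,X}$ and the persistence of the rate conditions, whence (using Lemma~\ref{omegadeg} to keep $\Omega|_{X^c_\th}$ nondegenerate) $J_c$, $J_c^{-1}$ and $A|_{X^c_\th}$ at $K_m$ differ from those at $K_{m-1}$ by $O(\kappa^{2}\delta_{m-1}^{-2\nu}\|E_{m-1}\|)$ in operator norm. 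Subtracting the rewritten $S_m$ from $S_{m-1}$ term by term, every factor changes by at most a fixed negative power of $\delta_{m-1}$ times $\kappa^{2}\|E_{m-1}\|$, the worst term being the one carrying a single Cauchy derivative, giving
\[
\|S_m-S_{m-1}\|_{\rho_m}\le C\kappa^{2}\delta_{m-1}^{-(2\nu+1)}\|E_{m-1}\|_{\rho_{m-1},Y^c_\th}.
\]
Averaging over $\TT^\ell$ and applying a Neumann series to $\avg(S_m)=\avg(S_{m-1})+\big(\avg(S_m)-\avg(S_{m-1})\big)$ (valid once $|\avg(S_{m-1})^{-1}|\,|\avg(S_m)-\avg(S_{m-1})|<\tfrac12$) yields the invertibility of $\avg(S_m)$ and
\[
|\avg(S_m)^{-1}|\le|\avg(S_{m-1})^{-1}|+C'_{m-1}\kappa^{2}\delta_{m-1}^{-(2\nu+1)}\|E_{m-1}\|_{\rho_{m-1},Y^c_\th},
\]
after absorbing $2|\avg(S_{m-1})^{-1}|^{2}$ into $C'_{m-1}$.

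The routine but slightly delicate part is the term-by-term differencing of \eqref{twistdefined}: one has to keep the derivative count honest so that the largest loss is exactly $\delta_{m-1}^{-(2\nu+1)}$ and not $\delta_{m-1}^{-(2\nu+2)}$, which is why one first removes the genuine second derivatives of $K$ via $\partial_{\omega} DK=A\,DK+DE$ and bases the remaining Cauchy estimate on \eqref{improvement1} rather than on differentiating \eqref{improvement2} again. Apart from this bookkeeping the proof is identical to \cite{fontichdelLS071}, the one conceptual point being the appeal to Corollary~\ref{cor1:iterNH} (in place of ordinary spectral perturbation theory) to control $\Pi^c$, $J_c$ and $A|_{X^c_\th}$ under the unbounded correction $\tilde A-A=D\X(K_m)-D\X(K_{m-1})$.
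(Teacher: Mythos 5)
Your proposal is correct and follows essentially the route the paper intends: the paper itself omits the argument, stating that these conditions "can be checked in exactly the same way as described in \cite{fontichdelLS071}", and your Neumann-series perturbation of $DK^\perp DK$ and of $\avg(S)$, fed by the bounds \eqref{improvement1}--\eqref{improvement2} on $\Delta K_{m-1}$ and its derivative, is precisely that finite-dimensional argument. Your one added ingredient --- controlling the change of $\Pi^c$, $J_c$ and $A|_{X^c_\th}$ via Corollary~\ref{cor1:iterNH} rather than ordinary spectral perturbation theory --- is exactly how the paper's Section~\ref{change-nondeg} is meant to supply the infinite-dimensional input, so there is no substantive divergence.
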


\section{Construction of quasi-periodic solutions for the Boussinesq equation}\label{applications}

This section is devoted to an application of Theorem~\ref{existence}
to a concrete equation that has appeared in the literature. 

In Section~\ref{sec:formal}, we 
 will verify the formal hypothesis of the general Theorem~\ref{existence}. 
First we will verify the geometric  hypothesis, choose the concrete 
spaces that will play the role of the abstract ones, etc.
In Section~\ref{sec:Lindstedt}, we will construct approximate solutions 
that satisfy the quantitative properties. By applying Theorem~\ref{existence},
to these approximate solutions, we will obtain Theorem~\ref{thBou}. 

\subsection{Formal and geometric considerations}\label{sec:formal}

The Boussinesq equation has been widely studied in the context of fluid
mechanics since the pioneering work~\cite{Boussinesq}. It is the 
equation (in one dimension) with periodic boundary conditions
\begin{equation}\label{boussinesq}
u_{tt}= \mu u_{xxxx}+u_{xx}+ (u^2)_{xx}\,\,\,\,\mbox{on
  $\mathbb{T},\,\,t\in \RR$}. 
\end{equation}
where $\mu >0$ is a parameter.

We will introduce an additional parameter $\ep$ which will
 be useful in the sequel as a nemonic 
device to perform perturbation theory.  
Note however that it can be eliminated by rescaling the $u$, considering $v=\ep u$. So 
that discussing small $\ep$ is equivalent to discussing small amplitude 
equations.

The  equation \eqref{boussinesq} is ill-posed in any space and one can construct initial data for which there is no existence 
in any finite interval of time. As we will see later, the non-linear 
term does not make it well posed in  the spaces $X$ we will consider 
later. 

The equation~\eqref{boussinesq} 
 is a $4$th order equation in space.  Since it is second order 
in time, it is standard to write it as a first order system
\begin{equation}\label{LN}
\dot{z}=\L_\mu z + \mathcal N(z),
\end{equation}
where 
$$\L_\mu=\begin{pmatrix} 0  & 1 \\ \partial^2_{x} +\mu \partial^4_x& 0 \end{pmatrix}$$
and 
$$\mathcal N(z)=(0,\partial^2_x u^2).$$

Notice that \eqref{LN} has the structure we assumed in 
\eqref{decomposition} , namely that the evolution operator is 
the sum of a linear and constant operator and a nonlinear part,
which is of lower order than the linear part.

\subsection{Choice of spaces}
In this section we present some choices of 
spaces $X$,$Y$ for which the operators entering in the Boussinesq
equation satisfy the assumptions of Theorem~\ref{existence}.
As indicated in Section~\ref{sec:consequences}, 
there are several choices
and it is advantageous to follow a choice for the local uniqueness part 
and a different one for the existence. The spaces we consider 
will have one free parameter.

For  $\rho >0$  we denote:
$$D_{\rho}= \left \{ z \in \CC^\ell/ \ZZ^\ell\,|\, |\mbox{Im}\, z_i| <\rho \right \}$$
and denote $H^{\rho,m}(\TT)$ for $\rho >0$ and $m \in \NN$, the space of analytic functions $f$ in $D_\rho$ such that the quantity 
$$\|f\|^2_{\rho,m}=\sum_{k \in \ZZ} |f_k|^2 e^{4\pi \rho |k|}(|k|^{2m}+1)$$
is finite, and where $\left \{ f_k \right \}_{k \in \ZZ}$ are the
Fourier coefficients of $f$. For any $\rho >0$ and $m \in \NN$, the
space $\Big (H^{\rho,m}(\TT), \| \cdot \|_{\rho,m} \Big )$ is a
Hilbert space. Furthermore, this scale of Hilbert spaces
$H^{\rho,m}(\TT)$ for $\rho >0$ and $m > \frac12$ is actually a
Hilbert algebra for pointwise multiplication, i.e. for every $u,v \in
H^{\rho,m}(\TT)$ there exists a constant $C$ such that
$$\|u \, v \|_{\rho, m} \leq C \|u \|_{\rho, m} \|v \|_{\rho, m}.$$  

Extending the definition to $\rho=0$, $H^{0,m}(\TT)$ is the standard
Sobolev space on the torus and for $\rho >0$,  $H^{\rho,m}(\TT)$ consists of
analytic functions on the extended strip $D_\rho$ with some
$L^2$-integrability conditions on the derivatives up to order $m$ on
the strip $D_\rho$.  As already noticed, we are going to construct
quasi-periodic solutions in the class of small amplitude solutions for
\eqref{boussinesq}.

For the system~\eqref{LN}, it is natural to consider the space for $\rho >0$ and $m>\frac52$
\begin{equation} \label{Xspace}
X_{\rho,m} = H^{\rho,m} \times H^{\rho, m-2}
\end{equation}

We note that ${\mathcal L}_\mu$ sends $X_{\rho, m}$ into $X_{\rho,
  m-2}$, but we observe that this is not really used in Theorem
~\ref{existence}. By the Banach algebra property of the scale of
spaces $H^{\rho,m}(\TT)$ when $m > 1/2$ and the particular form of the
nonlinearity, we have the following proposition (see \cite{llave08}).
\begin{pro}
The non linearity $\mathcal N$ is analytic from $X_{\rho, m}$ into $X_{\rho,m}$
when $m > 5/2$. 
\end{pro}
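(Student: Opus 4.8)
The statement to prove is the following proposition:

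\begin{proof}[Proof proposal]
The plan is to reduce the analyticity of $\mathcal N$ on $X_{\rho,m}$ to the Banach algebra property of the scale $H^{\rho,m}(\TT)$ for $m>1/2$, together with the fact that second spatial differentiation is bounded from $H^{\rho,m}$ to $H^{\rho,m-2}$. First I would write out $\mathcal N$ explicitly: for $z=(u,v)\in X_{\rho,m}=H^{\rho,m}\times H^{\rho,m-2}$ we have $\mathcal N(z)=(0,\partial_x^2(u^2))$, so only the first component $u\in H^{\rho,m}$ enters. The key observation is that $\mathcal N$ is a \emph{polynomial} (in fact quadratic and homogeneous) map of $u$; an analytic function between Banach spaces is one given locally by a norm-convergent series of bounded multilinear operators, and a polynomial trivially qualifies provided each multilinear piece is bounded. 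So the whole proof reduces to a single norm estimate.

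The main step is the estimate $\|\partial_x^2(u^2)\|_{\rho,m-2}\le C\|u^2\|_{\rho,m}\le C'\|u\|_{\rho,m}^2$, valid for $m>5/2$ (so that $m-2>1/2$ and also $m>1/2$, which is what the Hilbert algebra property requires). Concretely: the map $w\mapsto \partial_x^2 w$ is bounded $H^{\rho,m}\to H^{\rho,m-2}$ because, in Fourier coordinates, $(\partial_x^2 w)_k = -(2\pi k)^2 w_k$, and $|{-}(2\pi k)^2|^2(|k|^{2(m-2)}+1)\le C(|k|^{2m}+1)$ for all $k\in\ZZ$ with a constant $C$ independent of $k$; summing against $e^{4\pi\rho|k|}$ gives boundedness. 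Then the Hilbert algebra property (already quoted in the excerpt, valid for $m>1/2$) gives $\|u^2\|_{\rho,m}\le C\|u\|_{\rho,m}^2$. Composing, $\|\mathcal N(u,v)\|_{X_{\rho,m}}=\|\partial_x^2(u^2)\|_{\rho,m-2}\le C\|u\|_{\rho,m}^2\le C\|z\|_{X_{\rho,m}}^2$.

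To conclude analyticity, I would exhibit the explicit multilinear expansion: define the bounded symmetric bilinear operator $B:X_{\rho,m}\times X_{\rho,m}\to X_{\rho,m}$ by $B((u_1,v_1),(u_2,v_2))=(0,\partial_x^2(u_1u_2))$. Boundedness of $B$ follows from the same computation as above (differentiation bounded $H^{\rho,m}\to H^{\rho,m-2}$ composed with the bounded bilinear product map $H^{\rho,m}\times H^{\rho,m}\to H^{\rho,m}$). Then $\mathcal N(z)=B(z,z)$ for all $z\in X_{\rho,m}$, which is visibly a norm-convergent (indeed finite) series of bounded multilinear maps, hence $\mathcal N$ is analytic on all of $X_{\rho,m}$ (entire, in fact), and $D\mathcal N(z)h=2B(z,h)$. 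I do not anticipate a genuine obstacle here; the only point requiring a small amount of care is keeping track of which spaces the various factors live in (so that the product lands in $H^{\rho,m}$, needing $m>1/2$, before the two derivatives drop us to $H^{\rho,m-2}$, which is the target of the second factor of $X_{\rho,m}$). The hypothesis $m>5/2$ is exactly what makes all the intermediate regularities admissible.
\end{proof}
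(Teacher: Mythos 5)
Your proof is correct and takes essentially the same route as the paper, which simply invokes the Banach algebra property of $H^{\rho,m}(\TT)$ for $m>1/2$ together with the particular (quadratic) form of $\mathcal N$ and the boundedness of $\partial_x^2: H^{\rho,m}\to H^{\rho,m-2}$, citing \cite{llave08} for details. One minor remark: the condition $m-2>1/2$ is not actually used in your estimate, since the product is formed at regularity $m$ before the two derivatives are applied, so only $m>1/2$ (amply implied by $m>5/2$) enters this particular step.
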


In the system language, 
it is useful to think of ${\mathcal L}_\mu$ as an operator of order $2$ and
of $\mathcal N$ as an operator of order $0$.

Hence, in the present case, we can take $Y = X$ in the abstract Theorem \ref{existence}. 

\begin{remark} \label{illposed-nonlinear} 
Note that this gives a rigorous proof that the nonlinar evolution is 
ill-posed. If the non-linear evolution was well-posed in some of the 
$X_{\rho, m}$ spaces with $m > 5/2$, we could consider the nonlinear
evolution as a perturbation of the linear one. Using the 
usual Duhamel formula  of Lipschitz perturbations of 
semigroups \cite{henry},  we could conclude that the linear evolution
is well posed, which is patently false. 
\end{remark}

We will be actually considering a subspace of $X$ denoted $X_0$ consisting of functions $z(t) \in X$ such that 
\begin{equation} \label{automaticsymetry2} 
\int_0^1 dx \, z(\cdot ,x)\,dx=0.
\end{equation}
\begin{equation} \label{eq:momentum}
\int_0^1 dx \,  \partial_t z(\cdot,x)=0.
\end{equation} 
\begin{equation} \label{eq:symmetry} 
z(\cdot , x) = z(\cdot, -x)
\end{equation}

At the formal level, the subspace $X_0$ is invariant under
the equation  of \eqref{LN}.  
In contrast with the normalizations \eqref{automaticsymetry2} and 
\eqref{eq:momentum} that can be enforced by a change of variables, 
\eqref{eq:symmetry} is a real restriction.  It is possible 
to develop a theory without \eqref{eq:symmetry}, but we will not 
pursue it here.

We now check that the assumptions of Theorem \ref{existence} are
met. The main steps are to verify the formal assumptions of Theorem \ref{existence} and construct approximate solutions which are non degenerate.  

\subsection{Linearization around $0$}
We first study the eigenvalue problem for $U \in X, \sigma \in \CC$

\begin{equation*}
\L_\mu U = \sigma U. 
\end{equation*} 
This leads to the eigenvalue relation 
$$\sigma^2=-4\pi^2 k^2+ 16\pi^4\mu k^4=-4\pi^2k^2(1-4\pi^2 \mu k^2)$$
for $k \in \mathbb{Z}$. By symmetry, we assume that $k \geq 0$ and the
spectrum follows by reflection with respect to the imaginary axis. We
have the following lemma. 
\begin{lemma}
The operator $\L_\mu$ has discrete spectrum in $X$. Furthermore,
we have the following 
\begin{itemize}
\item The center spectrum of $\L_\mu$ consists in a finite number
  of eigenvalues. Furthermore, the dimension of the center subspace is
  even. 
\item The hyperbolic spectrum is well separated from the center
  spectrum. 
\end{itemize} 
\end{lemma}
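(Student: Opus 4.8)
The plan is to analyze the spectrum of $\L_\mu$ by exploiting the fact that, after a Fourier decomposition in the spatial variable $x\in\TT$, the operator $\L_\mu$ block-diagonalizes into a countable family of $2\times2$ matrices, one for each Fourier mode $k\in\ZZ$. Concretely, writing $U=(u,v)$ with $u=\sum_k u_k e^{2\pi i kx}$, the operator $\L_\mu$ acts on the $k$-th Fourier block as the matrix $\begin{pmatrix} 0 & 1 \\ -4\pi^2k^2 + 16\pi^4\mu k^4 & 0\end{pmatrix}$, whose eigenvalues $\sigma$ satisfy $\sigma^2 = -4\pi^2k^2(1-4\pi^2\mu k^2)$, exactly the relation already recorded before the lemma. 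So the full spectrum of $\L_\mu$ is the union over $k$ of the (at most two) roots of this quadratic; discreteness of the spectrum in $X$ follows because for each $k$ there are finitely many eigenvalues and, by the weighted norm defining $H^{\rho,m}$, the eigenfunctions $e^{2\pi i kx}$ times the $2\times2$-eigenvectors form a Schauder-type basis adapted to the splitting.

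Next I would sort the modes according to the sign of $1-4\pi^2\mu k^2$. When $1-4\pi^2\mu k^2>0$ (equivalently $|k|<\frac{1}{2\pi\sqrt\mu}$, a finite set of $k$), the eigenvalues $\sigma=\pm 2\pi i |k|\sqrt{1-4\pi^2\mu k^2}$ are purely imaginary; these give the \emph{center} spectrum, and since this set of $k$ is finite (and symmetric under $k\mapsto -k$), the center subspace is finite dimensional. When $1-4\pi^2\mu k^2<0$ (all but finitely many $k$), $\sigma=\pm 2\pi|k|\sqrt{4\pi^2\mu k^2-1}$ is real, one root positive and one negative, contributing to the unstable and stable spectrum respectively; note $|\sigma|\sim 4\pi^2\sqrt\mu\,k^2\to\infty$. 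The boundary case $1-4\pi^2\mu k^2=0$, if it occurs for an integer $k$, gives a double eigenvalue $0$ with a nontrivial Jordan block, which one folds into the center part. For the parity claim: the map $k\mapsto -k$ leaves the dispersion relation invariant, so the central eigenvalues come in pairs $\{k,-k\}$, and moreover for each such $k\neq 0$ one gets the conjugate pair $\pm i|\cdot|$; hence the center subspace dimension is $2$ times the number of positive $k$ with $1-4\pi^2\mu k^2\ge 0$ (with the $k=0$ mode handled separately — it contributes the double zero eigenvalue which is exactly the momentum/center-of-mass direction removed by the normalization \eqref{automaticsymetry2}–\eqref{eq:momentum}, so on $X_0$ it does not appear). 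Thus $\dim X^c$ is even, equal to $2\ell$ with $\ell$ as in \eqref{omega0bou}.

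For the separation statement, I would simply observe that the real parts of the hyperbolic eigenvalues are bounded away from $0$: for $|k|\ge \frac{1}{2\pi\sqrt\mu}+1$ we have $|\mathrm{Re}\,\sigma| = 2\pi|k|\sqrt{4\pi^2\mu k^2-1}\ge c>0$ for an explicit constant $c$ depending only on $\mu$, while the center eigenvalues have real part exactly $0$. Hence the center spectrum (a compact set on the imaginary axis) is at positive distance from the hyperbolic spectrum, and in fact one can draw vertical lines $\mathrm{Re}\,z=\pm c/2$ separating them; this is the form of separation needed to define the spectral projections $\Pi^{s,c,u}$ by the Riesz formula, matching the discussion after Definition~\ref{ND1} about $\A$ having its spectrum split into two sectors and a finite central set.

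The main obstacle is not conceptual but bookkeeping: one must be careful that the spectral projections obtained from this block structure are bounded on $X=H^{\rho,m}\times H^{\rho,m-2}$ uniformly, and that the resolvent estimates giving the smoothing rates \eqref{rate1}–\eqref{rate3} (with $\alpha_1=\alpha_2=0$ here since the linear evolution on each hyperbolic block is a genuine semigroup with no fractional smoothing loss, the loss being only the finitely many derivatives between $X$ and $Y=X$) hold with constants independent of $k$. Since the hyperbolic eigenvalues grow like $k^2$, the stable/unstable semigroups $e^{t\sigma}$ decay super-exponentially fast in $k$, which more than compensates any polynomial weight in the $H^{\rho,m}$ norm; this is where one genuinely uses that the spaces are defined with exponential weights $e^{4\pi\rho|k|}$ (they are preserved because $|\mathrm{Im}\,\sigma|$ also grows only like $k^2$, not faster, so a fixed strip width $\rho$ is maintained for bounded time — though here for the purely linear operator one does not even propagate, one just needs the splitting). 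I would present these verifications as short estimates on the $2\times2$ blocks, citing \cite{llave08} for the analogous computation and \cite{pazy} for the semigroup generation criterion, and conclude that $\L_\mu$ on $X_0$ satisfies all of {\bf SD1}–{\bf SD3} with $\N\equiv 0$, which is the content of the lemma together with the standing observation (made right after Definition~\ref{ND1}) that these properties then persist under the small nonlinear perturbation.
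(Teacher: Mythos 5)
Your proposal is correct and follows essentially the same route as the paper: Fourier-mode diagonalization, the dispersion relation $\sigma^2=-4\pi^2k^2(1-4\pi^2\mu k^2)$, finitely many purely imaginary eigenvalues for small $|k|$ giving an even-dimensional center, real hyperbolic eigenvalues growing like $k^2$, and the zero mode excluded by the average-zero normalization. You are in fact somewhat more explicit than the paper's very brief proof (notably on the spectral-gap estimate and the Riesz projections), but there is no substantive difference in approach.
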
 
\begin{proof}
From the equation, 
$$\sigma^2=-4\pi^2 k^2+16\pi^4 \mu k^4=-4\pi^2k^2(1-4\pi^2 \mu k^2)$$
we deduce easily that the spectrum is discrete in $X$. Furthermore,
$0$ is not an eigenvalue since we assume $u$ to have average $0$. Finally, we notice that when
$0<k^2<\frac{1}{4\pi^2 \mu}$, one has $\sigma^2 <0$ and since there is a
finite (even) number of values in this set, this leads to the desired
result.  The separation of the spectrum directly follows from the discreteness of the spectrum. 
\end{proof}
We then have the following set of eigenvalues 
$$\text{Spec}(\L_\mu)=\left \{ \pm 2\pi i |k| \sqrt{1-4\pi^2 \mu k^2} =\pm \sigma_k(\mu)\right \}_{k\geq 1}. $$

The center space $X^c_0$ is the eigenspace degenerate by the
eigenfunctions correponding to the eigenvalues $\sigma_k(\mu)$ for
which indices $k=1,...,\ell$ we have $1-4\pi^2 \mu k^2 \ge 0$. The
center subspace $X_0^c$ is spanned by the eigenvectors

\[
U_k=(u_k,v_k)=(\cos( 2\pi k x), \sigma_k(\mu) \cos(2\pi k x))_{k=1, \ldots,\ell}.
\]

Any element $U$ on the center subspace can be expressed as:
\[
U=\sum_{k=1}^{\ell} \alpha_k U_k.
\]
with the $\alpha_k$ arbitrary real numbers. 

\subsection{Verifying the smoothing properties of
the partial evolutions of the linearization around $0$} 

We now come to the evolution operators and their smoothing properties. 
We have:

\begin{lemma}
The operator $\L_\mu$ generates  semi-group operators $U_\th^{s,u}(t)$ in positive and negative times. Furthermore, the following estimates hold
$$\|U_\th^{s}(t)\|_{X,X} \leq \frac{C}{t^\frac12} e^{-D t },t >0$$
and 
 $$\|U_\th^{u}(t)\|_{X,X} \leq \frac{C'}{|t|^\frac12} e^{D' t },t <0$$
for some constants $C,C',D,D'>0$. 
\end{lemma}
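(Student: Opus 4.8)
The plan is to exploit that $\L_\mu$ has constant coefficients, hence is a Fourier multiplier, so that everything reduces to elementary estimates on the scalar $2\times2$ blocks obtained by expanding in $\cos(2\pi kx)$ (we are on the symmetric, zero-average subspace $X_0$, so only $k\ge 1$ occur). On the space spanned by $\cos(2\pi kx)$ the operator $\L_\mu$ acts as $\L_\mu^{(k)}=\left(\begin{smallmatrix}0&1\\ \sigma_k^2&0\end{smallmatrix}\right)$ with $\sigma_k^2=-4\pi^2k^2+16\pi^4\mu k^4=4\pi^2k^2(4\pi^2\mu k^2-1)$, whose eigenvalues are $\pm\sigma_k$. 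By the previous lemma there is a least $k_0$ such that $\sigma_k$ is real and positive for all $k\ge k_0$ (the $k<k_0$ giving the finite-dimensional center), and then $\sigma_k=2\pi|k|\sqrt{4\pi^2\mu k^2-1}\to\infty$ like $4\pi^2\sqrt\mu\,k^2$, so $D_0:=\inf_{k\ge k_0}\sigma_k>0$; for such $k$ the block $\L_\mu^{(k)}$ has distinct eigenvalues, hence is diagonalizable, with stable eigenline $\CC v_k^s$, $v_k^s=(1,-\sigma_k)$, and unstable eigenline $\CC v_k^u$, $v_k^u=(1,\sigma_k)$. Thus $X^s_0=\overline{\mathrm{span}}\{v_k^s\cos(2\pi kx):k\ge k_0\}$ and similarly $X^u_0$, and these mode components are mutually orthogonal in every norm $\|\cdot\|_{\rho,m}$ because the $\cos(2\pi kx)$ are.

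Next I would produce the semigroups. Since $\X=\L_\mu$ is autonomous the $\th$-dependence is trivial, so the cocycles are genuine semigroups, $U_\th^{s,u}(t)=U^{s,u}(t)$. In the Fourier representation $\L_\mu$ restricted to $X^s_0$ is multiplication by the scalars $-\sigma_k$, $k\ge k_0$, which lie in the half-plane $\{\Re z\le -D_0<0\}$ and tend to $-\infty$ along $\RR$; such an operator generates an analytic $C_0$-semigroup (Hille--Yosida, or directly), acting on the mode $k$ by multiplication by $e^{-\sigma_k t}$ for $t\ge0$, with strong continuity coming from dominated convergence as $t\to0^+$. Running time backwards on $X^u_0$, multiplication by the scalars $+\sigma_k$ likewise generates a semigroup $U^u(t)$ defined for $t\le0$ and acting on mode $k$ by $e^{\sigma_k t}$.

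Then the bounds follow immediately. For $\xi=\sum_{k\ge k_0}c_k\,v_k^s\cos(2\pi kx)\in X^s_0$, orthogonality across modes gives $\|\xi\|_X^2=\sum_k|c_k|^2\|v_k^s\cos(2\pi kx)\|_X^2$, while $U^s(t)\xi=\sum_k c_k e^{-\sigma_k t}v_k^s\cos(2\pi kx)$, so
$$\|U^s(t)\xi\|_X^2=\sum_{k\ge k_0}e^{-2\sigma_k t}\,|c_k|^2\,\|v_k^s\cos(2\pi kx)\|_X^2\le\Big(\sup_{k\ge k_0}e^{-2\sigma_k t}\Big)\|\xi\|_X^2=e^{-2D_0 t}\|\xi\|_X^2,\qquad t>0.$$
Hence $\|U^s(t)\|_{X,X}\le e^{-D_0 t}$; since $\sup_{t>0}t^{1/2}e^{-D_0 t/2}=(D_0 e)^{-1/2}<\infty$, this yields $\|U^s(t)\|_{X,X}\le C\,t^{-1/2}e^{-Dt}$ with $C=(D_0 e)^{-1/2}$, $D=D_0/2$, which is the claimed form (with exponent $\alpha_1=\tfrac12$). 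The unstable estimate is obtained identically after reversing the sign of $t$, giving $C',D'$ built from $D_0$ in the same way.

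I do not expect a serious obstacle: the whole content is that $\L_\mu$ is a Fourier multiplier, so one is reduced to the scalar computations above, and in fact the decay is genuinely exponential (the factor $t^{-1/2}$ is weaker than needed and is kept only to match the abstract rate conditions \eqref{rate1}--\eqref{rate2}). The one point deserving care is orthogonality: \emph{within} a fixed hyperbolic mode the stable and unstable eigenvectors $(1,\mp\sigma_k)$ are not orthogonal in the ambient $L^2$ pairing and their angle degenerates as $k\to\infty$; this is precisely why the ambient space must be the weighted product $X=H^{\rho,m}\times H^{\rho,m-2}$, for which the mode-$k$ angle between $X^s_0$ and $X^u_0$ stays bounded away from $0$ and $\pi$ uniformly in $k$, so the projections $\Pi^{s,u}$ are bounded. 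That boundedness is needed when checking the full Definition~\ref{ND1}, but for the semigroup bounds of the present lemma only the trivial orthogonality of distinct Fourier modes is used.
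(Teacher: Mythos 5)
Your argument is correct and is essentially the proof the paper has in mind (the paper simply cites \cite{llave08}): expand in Fourier modes, diagonalize the $2\times 2$ blocks $\begin{pmatrix} 0 & 1\\ \sigma_k^2 & 0\end{pmatrix}$, and bound the stable/unstable evolutions by the supremum of the scalar multipliers $e^{\mp\sigma_k t}$ over the hyperbolic modes, using that the $\|\cdot\|_{\rho,m}$ norms are diagonal in Fourier. The only difference is cosmetic: the cited computation estimates the multipliers times the ratio of the weights so as to obtain the smoothing ($Y\to X$) version of the bound, which is where the factor $t^{-1/2}$ genuinely enters, whereas for the $X\to X$ statement of this lemma you correctly note that pure exponential decay already suffices and the $t^{-1/2}$ is kept only to match the abstract rate conditions.
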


\begin{proof} 
The proof is given in 
detail in  \cite[page 404-405]{llave08}. It is based on  observing that the evolution operator 
in the (un)stable spaces can be expressed in 
Fourier series. 
Since the norms considered are 
given by the Fourier terms (with different weights), it suffices 
to estimate the sup of the multipliers times the ratio of the weights. 
\end{proof}

Until now, we have considered only the
 linearization around the equilibrium $0$ in $X$. 
Of course, by the stability theory of the splittings developed in 
Section~\ref{change-nondeg}, the spectral non-degeneracy properties will 
be satisfied by all the approximate solutions that are small enough in 
the smooth norms. As we will see, our approximate solutions will 
be trigonometric polynomials with small coefficients.

\subsection{Construction of an approximate solution} 
\label{sec:Lindstedt}

This section is devoted to the construction of an approximate non-degenerate solution for equation \eqref{LN}. We use a Lindstedt series
argument to construct approximate solutions for 
all \emph{``nonresonant''} values of $\mu$. 
Then, we will verify the twist  non-degeneracy conditions 
for some values of $\mu$ only.

\begin{remark}
For the experts, we note that the analysis
is remarkably similar to the perturbative analysis near elliptic 
fixed points in Hamiltonian systems. We have found useful  the treatment in \cite[Vol 2]{Poincare99}. 
More modern treatments based on transformation theory are
in  \cite{Moser68, Zehnder73,Douady88b}. In our case, the transformation 
theory is more problematic,  hence we take advantage of the a-posteriori format and just construct approximate solutions for the initial guess. 
\end{remark}

The following result establishes the existence (and some uniqueness which we will not use) 
of the Lindstedt series
under appropriate non-resonance conditions.  
\begin{lemma}\label{lindApprox}
Let $\ell$ be as before. For all $N \ge 2$, 
assume the nonresonance condition to order $N$
given by 
\begin{equation*}
F(k,j) \ne 0, \quad \quad  k \in \ZZ^\ell, j \in \NN, 1 < |k| \le N
\end{equation*}
where 
$$
F(k,j)\equiv \left[ (\omega_0 \cdot k)^2 -  2 \pi^2( j^ 2 - 2 \mu\pi^2 j^2)\right]. 
$$

 Then, for all $\mathcal U_1$ depending on $\ell$ parameters, 
there exist $(\omega^1,...,\omega^N) \in (\RR^\ell)^N$ and $(\mathcal U_2,...,\mathcal U_N) \in (H^{\rho,m}(\TT))^{N-1}$ parametrized by $(A^1_1,...,A^1_\ell)\in \RR^\ell$ for any $\rho >0$ such that for any $\sigma \geq 0$
\[
\|(u_\ep^{[\le N]})_{tt} -(u_\ep^{[\le N]})_{xx}-\mu (u_\ep^{[\le N]})_{xxxx}-((u_\ep^{[\le N]})^2)_{xx}.\|_{H^{\rho,m}(\TT)} \leq C \ep^{N+1}
\]
for some constant $C>0$ and 
\[
u_\ep^{[\le N]}(t,x)=\sum_{k=1}^N\ep^k \mathcal U_k( \omega^{[\leq  N]}_\ep t,x)
\]
where 
\[
\omega^{[\leq  N]}_\ep=\omega^0+\sum_{k=1}^N \ep^k \omega^k. 
\]
The coefficients $\mathcal U_k$ are trigonometric polynomials and can be obtained in such a way 
that the projection over the kernel of 

$$\M_0 = (\omega^0 \cdot \partial_{\th})^2 -\partial^2_{xx}-\mu \partial^4_{xxxx} $$

 is zero.
Moreover, the normalizations \eqref{symmetrytime}, \eqref{automaticsymetry2}
are satisfied.   With such a normalization,
they are unique. 
\end{lemma}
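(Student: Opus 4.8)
The plan is to argue by induction on $N$, following the classical Lindstedt procedure for perturbations of an elliptic equilibrium (compare \cite[Vol.~2]{Poincare99}), so that at each order one solves a linear equation governed by the operator $\mathcal M_0$ by matching its kernel projection. First I would substitute the ansatz $u_\ep(t,x)=\sum_{k\ge 1}\ep^k\mathcal U_k(\omega_\ep t,x)$, $\omega_\ep=\omega^0+\sum_{k\ge 1}\ep^k\omega^k$, into \eqref{boussinesq} written as $\mathcal M_\ep\mathcal U_\ep-((\mathcal U_\ep)^2)_{xx}=0$ with $\mathcal M_\ep=(\omega_\ep\cdot\partial_\th)^2-\partial_{xx}-\mu\partial_{xxxx}$, and equate coefficients of each power of $\ep$. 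At order $\ep^1$ one gets $\mathcal M_0\mathcal U_1=0$. Since $\mathcal M_0$ is diagonal in the Fourier basis $e^{2\pi i(k\cdot\th+jx)}$ with multiplier a nonzero multiple of $F(k,j)$, the dispersion relation together with the normalizations \eqref{symmetrytime} (evenness in $\th$) and \eqref{automaticsymetry2} (zero $x$-average) identifies $\ker\mathcal M_0$ with the span of $\{\cos(2\pi\th_i)\cos(2\pi k_i x)\}_{i=1}^\ell$, so $\mathcal U_1=\sum_{i=1}^\ell A_i^1\cos(2\pi\th_i)\cos(2\pi k_i x)$, parametrized by $A^1=(A_1^1,\dots,A_\ell^1)\in\RR^\ell$.

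For the inductive step, suppose $\mathcal U_1,\dots,\mathcal U_{n-1}$ and $\omega^1,\dots,\omega^{n-2}$ have been produced, that they satisfy the stated normalizations, and that each $\mathcal U_j$ is a trigonometric polynomial whose $\th$-support lies in $|k|\le j$. Collecting the coefficient of $\ep^n$ yields
\[
\mathcal M_0\mathcal U_n=R_n-2(\omega^0\cdot\partial_\th)(\omega^{n-1}\cdot\partial_\th)\mathcal U_1 ,
\]
where $R_n$ is an explicit polynomial expression in the already-constructed data (arising from the lower-order products in $((\mathcal U_\ep)^2)_{xx}$ and from the lower-order frequency corrections); by the inductive hypothesis $R_n$ is a trigonometric polynomial with $\th$-support in $|k|\le n$. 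Projecting onto $\ker\mathcal M_0$, the only contribution of the unknown $\omega^{n-1}$ is $\sum_i (2\pi)^2\omega_i^0\omega_i^{n-1}A_i^1\cos(2\pi\th_i)\cos(2\pi k_i x)$, whose coefficient matrix $\mathrm{diag}\big((2\pi)^2\omega_i^0A_i^1\big)$ is invertible because $\omega_i^0\ne 0$ and $A_i^1\ne 0$; hence the solvability condition $\Pi_{\ker}\big(R_n-2(\omega^0\cdot\partial_\th)(\omega^{n-1}\cdot\partial_\th)\mathcal U_1\big)=0$ determines $\omega^{n-1}$ uniquely (in particular this forces $\omega^1=0$). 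After that, $\mathcal M_0$ is boundedly invertible on the complement of its kernel acting on the finitely many Fourier modes present in the right-hand side: for $1<|k|\le n$ this is exactly the content of the nonresonance hypothesis $F(k,j)\ne 0$ for $1<|k|\le N$, while the modes with $|k|\le 1$ split into the center modes (which lie in $\ker\mathcal M_0$ and are killed by the solvability condition and the zero-average normalization) and the remaining off-center modes, for which $F(k,j)\ne 0$ holds for $\mu$ in the range under consideration. We then define $\mathcal U_n$ by inverting $\mathcal M_0$ on the non-kernel part of the right-hand side and setting its $\ker\mathcal M_0$-component to zero, as the normalization requires; one checks directly that $\mathcal U_n$ is again a trigonometric polynomial with $\th$-support in $|k|\le n$, has zero $x$-average, and is even in $\th$, so the induction continues. (The top correction $\omega^N$ is obtained either from the solvability condition at order $\ep^{N+1}$, which involves only the data $\mathcal U_1,\dots,\mathcal U_N$ and $\omega^1,\dots,\omega^{N-1}$, or may simply be set to zero without affecting the conclusion.) Uniqueness at each step given the normalizations is immediate from this construction, hence uniqueness of the whole series.

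Finally, by construction the residual of \eqref{boussinesq} applied to $u_\ep^{[\le N]}=\sum_{k=1}^N\ep^k\mathcal U_k(\omega_\ep^{[\le N]}t,x)$ contains only the powers $\ep^{N+1},\ep^{N+2},\dots$; since the $\mathcal U_k$ are fixed trigonometric polynomials, for $\ep$ in a bounded range this residual is $O(\ep^{N+1})$ in $H^{\rho,m}(\TT)$ for every $\rho\ge 0$ and every $m$, with a constant $C$ depending only on $N$, $\rho$, $m$, $\mu$ and the amplitudes $A^1$ through finitely many bounded Fourier coefficients. The main obstacle I expect is not analytic but combinatorial: keeping track, through the induction, that $R_n$ remains a trigonometric polynomial of degree at most $n$ in $\th$ (so that only finitely many nonresonance conditions, those for $|k|\le N$, are needed) and checking that at every order the kernel projection can be annihilated by the frequency correction — i.e.\ that $\omega^{n-1}$ enters linearly with an invertible coefficient, which is where the hypothesis that the amplitudes $A_i^1$ are nonzero is used. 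Everything else is the standard elliptic Lindstedt computation, and in the case $\ell=1$ relevant to Theorem~\ref{thBou} these verifications are elementary.
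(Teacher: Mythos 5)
Your proposal is correct and follows essentially the same route as the paper's proof: substitute the Lindstedt ansatz and match powers of $\ep$, use that $\M_0$ is diagonal on the Fourier basis, invert it off its kernel thanks to the nonresonance condition $F(k,j)\neq 0$, determine $\omega^{n-1}$ from the projection onto $\ker \M_0$, set the kernel component of $\mathcal U_n$ to zero to get uniqueness, and deduce the $O(\ep^{N+1})$ residual bound from the fact that all the $\mathcal U_k$ are trigonometric polynomials. Your explicit observation that the solvability condition fixes $\omega^{n-1}$ through an invertible diagonal matrix with entries proportional to $\omega^0_i A^1_i$ (hence requiring nonvanishing amplitudes) makes precise a point the paper leaves implicit, but otherwise the two arguments coincide.
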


Before going into the proof itself, we comment a bit on the theory of Lindstedt series. We define the hull function as 
$$u_\ep(t,x)=\mathcal U_\ep (\omega_\ep t, x)$$
where $\mathcal  U_\ep: \TT^\ell \times \TT\mapsto \mathbb R$ with $\ell=\frac{\text{dim} X_0^c}{2}$. 

There are two versions of the theory: one assuming the symmetry 
condition for the solutions 
\begin{equation} \label{symmetrytime} 
\mathcal  U_\ep(\th,\cdot)= \mathcal U_\ep(-\th,\cdot)
\end{equation}
and another one without assuming \eqref{symmetrytime}. 
For simplicity, we will assume the symmetry. We note that, thanks to 
the a-posteriori format of the theorem, we only need to produce 
an approximate solution and verify the non-degeneracy conditions.

The function $ \mathcal U_\ep$ and the frequency $\omega_\ep$ produce
a solution of \eqref{LN} if and only if they satisfy the equation 
\begin{equation}\label{eqU}
(\omega_\ep \cdot \partial_{\th})^2 \mathcal U_\ep =\partial^2_{xx} \mathcal U_\ep+ \mu \partial^4_{xxxx} \mathcal U_\ep+ (\mathcal U_\ep^2)_{xx}.
\end{equation}
We emphasize that we are considering now 
that  both $\mathcal U_\ep$ and $\omega_\ep$ are unknowns to be determined in 
\eqref{eqU}. As we will see, 
we will obtain $\mathcal U_\ep$ and $\omega_\ep$, depending on 
$\ell$ free arbitrary parameters.

Following the standard procedure of Lindstedt series, 
we will consider formal expansions   $\mathcal U_\ep$
and 
$\omega_\ep$ in  powers of $\ep$. 
We will impose that finite order truncations to order $N$ satisfy the equation 
\eqref{eqU} up to an error $C_N |\ep|^{N+1}$. Hence, the series are 
not meant to converge (in general they will not) but they indicate a sequence 
of approximate solutions that solve the equation to higher and higher order
in $\ep$. We will also verify the other non-degeneracy hypothesis 
of Theorem~\ref{existence}. 

We consider the formal sums
\begin{equation} \label{formalsums}
\begin{split} 
& \mathcal U_\ep(\th,x) \sim \sum_{k=1}^\infty \ep^k \mathcal U_k (\th,x) \\
&\omega_\ep \sim \omega^0 +\sum_{k=1}^\infty \ep^k \omega^k.
\end{split} 
\end{equation}

\begin{remark}
Notice that the sum for $\mathcal U_\ep$ starts with $\ep$ since we have in mind to consider small amplitude solutions of the equation. 
\end{remark}

The meaning of formal power solutions is 
that  we 
truncate these sums at order $N$ arbitrary,  $N \geq 1$ and consider
\begin{equation*}
\begin{split}
&u^{[\leq N]}_\ep(\th,x)=\sum_{k=1}^N \ep^k \mathcal U_k (\th,x) \\
& \omega^{[\leq N]}_\ep = \omega^0+\sum_{k=1}^N  \ep^k \omega^k.
\end{split}
\end{equation*}

As  it often happens in Lindstedt series theory, the first terms of 
the recursion are different from the others. In our case, the first 
step will allow us to choose solutions of the first step depending 
on $\ell$ parameters. Once these solutions are chosen, 
we can obtain all the other solutions in a unique way. 
We note that the computations are very algorithmic and subsequently  
can be programmed. The normalization in the last item of Lemma~\ref{lindApprox} is 
natural in Lindstedt series theory. If one changes the parameters, 
introducing new parameters
$A^1_i = B^1_i + \ep \hat A_i(B^1_1, \ldots, B^1_\ell; \ep)$,
one obtains a totally different series, which of course parametrizes 
the same set of solutions. In any case, we emphasize that for 
us the main issue is to construct an approximate solution.

\begin{proof}
We substitute  the sums for $\omega_\ep$ and $\mathcal U_\ep$  into \eqref{eqU} and identify at all orders. 

{\bf Order $1$}: We get $$(\omega_0 \cdot \partial_{\th})^2 \mathcal U_1=\partial^2_{xx} \mathcal U_1 +\mu \partial^4_{xxxx} \mathcal U_1. $$

\def\M{{\mathcal M}}
We search for solutions of the form $\cos(2\pi \omega^0_j \th_j) \cos(2\pi j x)$
where $j \in \NN$. Therefore the frequencies are given by the relation 
$$\omega^0_j = 2\pi |j| \sqrt{1-4\pi^2 \mu j^2} .$$

We assume now that $4\pi^2\mu j^2 \neq 1$ and $1-4\pi^2\mu j^2 \geq 0$
which means that $j=1,...,\ell$ where
$\ell=\lfloor{\sqrt{\frac{1}{2\pi \mu}}}\rfloor$.

Now, we get the frequency vector
$\omega^0$, given by: 

$$(\omega^0)_{j=1,...,\ell}=\Big (2\pi |j| \sqrt{1-4\pi^2 \mu j^2}\Big )_{j=1,...,\ell}.$$

All the solutions of the equation satisfying the symmetry conditions 
\eqref{automaticsymetry}, \eqref{symmetrytime} are given by: 

\begin{equation}\label{order1}
\mathcal U_1 (\th,x)=\sum_{j=1}^\ell A^1_j \cos (2\pi \th_j) \cos (2\pi j x). 
\end{equation}

This is the customary analysis of the linearized equations in normal 
modes. For future reference, we denote 
$$\M_0 = (\omega_0 \cdot \partial_{\th})^2 -\partial^2_{xx}-\mu \partial^4_{xxxx}. $$

We note that the operator $\M_0$ is diagonal on trigonometric 
polynomials and we have that 
\begin{eqnarray} \label{multiplier} 
\mathcal M_0   \cos( 2 \pi k \cdot \th) \cos( 2 \pi j x)  =  F(k,j) \cos( 2 \pi k \cdot \th) \cos( 2 \pi j x) \nonumber
\end{eqnarray}
where 
$$
F(k,j)\equiv \left[ (\omega_0 \cdot k)^2 -  2 \pi^2( j^ 2 - 2 \mu\pi^2 j^2)\right]. 
$$

For convenience we will make the important {\bf non-resonance condition} to order $N$
\begin{equation} \label{nonresonance} 
F(k,j) \ne 0, \quad \quad  k \in \ZZ^\ell, j \in \NN, 1 < |k| \le N. 
\end{equation}

The nonresonance condition is very customary in the  study of 
elliptic fixed points. It says that the basic frequencies are 
not a combination of each other.  Note that if we fix $\ell, k$ and $j$ 
the condition $F(k,j) = 0$ is a polynomial equation in $\mu$ so that it is 
satisfied only for a finite number of $\mu$. This says that for the 
interval of $\mu$ where $\ell$ is constant, we may have to exclude
at most a finite number of values of $\mu$. Of course, requiring the result for all $N$ means excluding at most a countable number of values of $\mu$. 
A detailed analysis may obtain sharper conclusions 
on the values of $\mu$ that need to be excluded. 
In the final applications, we will only consider the interval in 
which $\ell = 1$, where it is easy to see that there is no resonant value. The following remark is obvious, but it will be useful for 
us later: 
\begin{pro} \label{prop:kernel}
Under the non-resonance condition, the kernel of 
the operator $\M_0$  is precisely $\omega^0\cdot \partial_\th$ of the span of 
the solutions $\mathcal U_1$ obtained before in \eqref{order1}. 
\end{pro}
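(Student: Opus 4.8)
The statement to prove (Proposition \ref{prop:kernel}) asserts that, under the non-resonance condition \eqref{nonresonance}, the kernel of the operator $\M_0 = (\omega^0 \cdot \partial_\th)^2 - \partial^2_{xx} - \mu \partial^4_{xxxx}$ is exactly $\omega^0 \cdot \partial_\th$ applied to the span of the first-order solutions $\mathcal U_1$ exhibited in \eqref{order1}.

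\medskip

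The plan is to diagonalize $\M_0$ in the Fourier basis and read off which modes are annihilated. First I would expand an arbitrary element of the relevant space of (analytic, mean-zero, $x$-even, $\th$-even) functions as a Fourier series $\sum_{k \in \ZZ^\ell, j \in \NN} c_{k,j} \cos(2\pi k \cdot \th)\cos(2\pi j x)$, and use the multiplier identity \eqref{multiplier} to see that $\M_0$ acts diagonally with eigenvalue $F(k,j) = (\omega^0 \cdot k)^2 - 2\pi^2 j^2(1 - 2\mu\pi^2 j^2)$ on the mode indexed by $(k,j)$. Hence a function lies in $\Ker \M_0$ if and only if its Fourier support is contained in the set $Z = \{(k,j) : F(k,j) = 0\}$. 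The task then reduces to identifying $Z$ precisely.

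\medskip

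The key step is to show $Z$ consists exactly of the pairs $(\pm e_j, j)$ for $j = 1, \ldots, \ell$, where $e_j$ is the $j$-th standard basis vector of $\ZZ^\ell$. By the very definition of $\omega^0$ in \eqref{order1}, namely $\omega^0_j = 2\pi|j|\sqrt{1 - 4\pi^2\mu j^2}$, we have $(\omega^0 \cdot e_j)^2 = (\omega^0_j)^2 = 4\pi^2 j^2(1 - 4\pi^2\mu j^2) = 2\pi^2 j^2(1 - 2\mu\pi^2 j^2)\cdot 2$; more directly one checks $F(e_j, j) = 0$ from the dispersion relation, so these modes are in the kernel. The non-resonance condition \eqref{nonresonance} rules out $F(k,j) = 0$ for all $k$ with $1 < |k| \le N$; since $|e_j| = 1$, the only surviving possibilities are $|k| = 0$ and $|k| = 1$. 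For $|k| = 0$ one has $F(0,j) = -2\pi^2 j^2(1 - 2\mu\pi^2 j^2) \neq 0$ for $j \ge 1$ in the range under consideration (this is exactly the condition that $0$ is not in the center spectrum, using $j \neq 0$ and the mean-zero normalization). For $|k| = 1$, write $k = \pm e_i$; then $F(\pm e_i, j) = (\omega^0_i)^2 - 2\pi^2 j^2(1 - 2\mu\pi^2 j^2)$, which vanishes precisely when $j$ equals $i$ (using strict monotonicity of $r \mapsto r^2(1 - 4\pi^2\mu r^2)$ on the relevant range, or simply that the values $(\omega^0_i)^2$ for $i = 1,\ldots,\ell$ are distinct and match $2\pi^2 j^2(1-2\mu\pi^2 j^2)$ only at $j = i$). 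Therefore $Z = \{(\pm e_j, j) : 1 \le j \le \ell\}$.

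\medskip

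Finally I would assemble the conclusion: a kernel element is a linear combination of $\cos(2\pi \th_j)\cos(2\pi j x)$ over $j = 1,\ldots,\ell$ (the $\cos$ of $+e_j \cdot \th$ and $-e_j\cdot\th$ being the same function). Comparing with \eqref{order1}, where $\mathcal U_1(\th,x) = \sum_{j=1}^\ell A^1_j \cos(2\pi\th_j)\cos(2\pi j x)$, this is exactly the span of the $\mathcal U_1$'s. The mention of $\omega^0 \cdot \partial_\th$ in the statement is because in the Lindstedt recursion the relevant object is the image under $\omega^0\cdot\partial_\th$ of that span; since $\omega^0\cdot\partial_\th$ maps $\cos(2\pi\th_j)\cos(2\pi j x)$ to $-2\pi\omega^0_j \sin(2\pi\th_j)\cos(2\pi j x)$, it is an isomorphism of that finite-dimensional space onto the span of the corresponding $\sin$-modes, and an identical multiplier computation shows $\M_0$ annihilates those as well. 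I do not anticipate a serious obstacle here: the only mildly delicate point is bookkeeping the symmetry constraints (mean-zero in $x$, evenness in $x$ and $\th$) so that one is working in the correct space and the reflection $k \mapsto -k$ is accounted for; everything else is a direct computation with the explicit multiplier $F(k,j)$.
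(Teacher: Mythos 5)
Your overall strategy is exactly the intended one (the paper itself offers no proof, calling the remark obvious): expand in the product Fourier basis, observe that $\M_0$ acts diagonally with multiplier $F(k,j)$, and identify the zero set of $F$ within the symmetry class (mean zero, even in $x$ and in $\th$), so that the kernel is spanned by the modes $\cos(2\pi\th_j)\cos(2\pi jx)$, $j=1,\dots,\ell$, i.e.\ the span of the $\mathcal U_1$ of \eqref{order1} (and of its $\omega^0\cdot\partial_\th$ image in the non-symmetric class). Your treatment of the cases $|k|=0$ and $|k|\ge 2$ is fine: the former is excluded by the mean-zero normalization in $x$ together with the standing assumption $4\pi^2\mu j^2\neq 1$, the latter by the non-resonance condition \eqref{nonresonance}.

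The gap is in the case $|k|=1$. You claim $F(\pm e_i,j)=0$ forces $j=i$, justified either by ``strict monotonicity of $r\mapsto r^2(1-4\pi^2\mu r^2)$ on the relevant range'' or by distinctness of the $(\omega^0_i)^2$. Neither holds in general: the function $g(r)=r^2(1-4\pi^2\mu r^2)$ increases only up to $r^2=1/(8\pi^2\mu)$ and then decreases, so on $\{1,\dots,\ell\}$ it need not be injective; concretely $g(i)=g(j)$ for $i\neq j$ exactly when $4\pi^2\mu(i^2+j^2)=1$, and at such exceptional values of $\mu$ one has $\omega^0_i=\omega^0_j$, so the modes $\cos(2\pi\th_i)\cos(2\pi jx)$ and $\cos(2\pi\th_j)\cos(2\pi ix)$ also lie in $\Ker\M_0$, which would falsify the conclusion as you argue it. These $|k|=1$ resonances are \emph{not} excluded by \eqref{nonresonance}, which as written only constrains $1<|k|\le N$; so to close the argument for $\ell\ge 2$ you must either add (or read into the non-resonance hypothesis) the finitely many extra conditions $4\pi^2\mu(i^2+j^2)\neq 1$ for $1\le i<j$, or restrict to the case actually used in Theorem~\ref{thBou}, namely $\ell=1$, where $g(j)<0<g(1)$ for $j\ge 2$ and your argument is complete. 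This imprecision partly mirrors the paper's own loose statement of the non-resonance condition, but as a proof of the proposition for general $\ell$ the step needs the explicit extra hypothesis rather than the monotonicity/distinctness claim.
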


{\bf Order $m \geq 2$: } The general equation  to be solved at order 
$m$ to ensure that the equation \eqref{eqU} is solvable to order 
$m$ has the form 
\begin{equation}\label{generalorder}
\mathcal M_0 \mathcal U_m  + 2 (\omega^{m-1} \cdot \partial_\th ) (\omega^0 \cdot \partial_{\th } )\mathcal U_1 =\mathcal{R}_m (\mathcal U_1,...,\mathcal U_{m-1},\omega^0,...,\omega^{m-2})
\end{equation}

where $\mathcal R_m$ is polynomial in its arguments and their
derivatives (up to order $4$). In particular, 
 if $\mathcal U_1,...,\mathcal U_{m-1}$ are
trigonometric polynomials then so is $\mathcal R_m$. It is also easy to see that if 
 $\mathcal U_1,...,\mathcal U_{m-1}$ have the symmetry properties
\eqref{symmetrytime} so does $\mathcal R_m$.  Hence, using the 
addition formula for products of angles, we can express 
\[
\mathcal R_m = \sum_{k \in \ZZ^\ell,j\in \ZZ} C_{k,j}(A^0_1, \ldots, A^0_\ell) \cos( 2 \pi k \cdot \th) \cos( 2\pi jx) 
\]

We inductively assume that  $\mathcal U_1, \ldots, \mathcal U_{m-1}$
are trigonometric polynomials and that $\omega^0,\ldots, \omega^{m-2}$ 
have been found. 
Then, we will show that  we can find $\omega^{m-1}$, $\mathcal U_m$ 
in such a way that the equation \eqref{generalorder} is solvable. 
Furthermore, the solution is unique if we impose the normalization 
at the end of Lemma~\ref{lindApprox}. The equation \eqref{generalorder} can be solved by 
identifying the coefficients of $\cos(2 \pi k \cdot \th) \cos(2 \pi j x)$
on both sides.

Since $\mathcal R_m$ is a trigonometric polynomial, we can separate the 
terms into terms that are in the kernel of $\mathcal M_0$ and 
terms for which the multiplier $F(k,j)$ corresponding to $\mathcal M_0$
is not zero.  We also note that, under the non-resonance hypothesis, 
we have that the kernel of $\mathcal M_0$ is precisely the 
functions that appear in $\mathcal U_1$. The term $(\omega^{m-1}\cdot \partial_\th) (\omega^{0}\cdot \partial_\th)\mathcal U_1$
lies in the kernel of $\mathcal M_0$.

Since $\mathcal M_0$ is diagonal, 
the  terms in the kernel of $\mathcal M_0$ are precisely those 
that are not in the range of $\mathcal M_0$. For the terms for which the multiplier  $F(k,j)$ is non zero 
(i.e. those terms in the range of $\mathcal M_0$),  we can invert 
$\mathcal M_0$ and, hence obtaining 
\[
\mathcal U_m(k,j)  = \frac{C_{k,j}}{F(k,j)} . 
\]

For the terms that lie in the kernel of $\mathcal M_0$, we cannot
divide by the multiplier $F(k,j)$ but instead obtain uniquely  $\omega^{m-1}$ to solve 
\eqref{generalorder}. Note that this uses the non-resonance 
condition so that  that the
kernel of $\mathcal M_0$ is precisely functions that appear
in $\mathcal U_1$. 

Of course, to solve \eqref{generalorder}, we could add any function in the 
kernel of $\mathcal M_0$. Under the normalization condition, we see 
that the term to add is uniquely determined to be zero. The evaluation of the norm in the Lemma comes directly from the fact that we are dealing with trigonometric polynomials, hence belonging to any Sobolev space.  
\end{proof}

\subsection{Application of Theorem  \ref{existence} to
the approximate solutions. End of 
the proof of  Theorem \ref{thBou}}
Let $\omega^0$ as in Theorem \ref{thBou} and consider $\mathcal U_\ep$  the function constructed in the previous section. Denote
$$K_0(\th)=\begin{pmatrix} \mathcal U_\ep (\th,.) \\ \omega_\ep \cdot \partial_\th \mathcal U_\ep (\th,.)\end{pmatrix} \in X_0. $$. 

We will proceed to verify the assumptions of 
Theorem~\ref{existence} taking as initial conditions of 
the iteration the results of the Lindstedt series. This will require carrying out explicitly the calculations indicated before to order $3$ and verifying that the twist condition is satisfied. 

\subsubsection{Smallness assumption on the error and range of $K_0$}

 Consider $K_0$ as above. Then Lemma \ref{lindApprox} ensures directly that the smallness assumption in Theorem \ref{existence} are satisfied with an error smaller than $C_N|\ep|^{N+1}$ for arbitrary large $N$.  

Note that this is verified for all values of $\ell$.

\subsubsection{Spectral non-degeneracy}

We check conditions \ref{ND1}. For $\ep=0$, all the conditions in \ref{ND1} are met by the previous discussion. In particular there exists an invariant splitting denoted 
\begin{equation}\label{split0}
X_0=X^c_0 \oplus X^s_0 \oplus X^u_0. 
\end{equation}

Now, by construction of $K_0$, choosing $\ep $ small enough again and
using the perturbation theory of the bundles developped in section
\ref{change-nondeg} (see Lemma~\ref{cor1:iterNH}), there exists an
invariant splitting for $K_0$ for $\ep$ small enough satisfying all
the desired properties and this proves the spectral non-degeneracy
conditions \ref{ND1} for $K_0$, together with the suitable estimates.

Note that this is verified for all values of $\ell$. 
\subsubsection{Twist condition}
 We now check the twist condition in Definition \ref{ND2}. Pick a Diophantine frequency $\omega $ as in Theorem \ref{thBou}.  Recall that the family of perturbative solutions is parameterized by 
$A^1_j$ for $j=1,...,\ell$, the $\ell$ parameters giving $\mathcal U_1$. In the system of coordinates given by $(A^1_1,\ldots, \theta)$, the twist condition amounts to showing that
\begin{equation}\label{twistinalpha}
|\mbox{det}\Big ( \partial_{A_j^1} \omega_i^N\Big )|^{-1} > T_N(\ep) > 0. 
\end{equation}

To verify the twist condition, we will assume that $\ell =1$. This is the only reason why in Theorem \ref{thBou} we are assuming $\ell=1$. 

If we can show that $T_N(\ep) >  C |\ep|^a$ for some positive 
$a, C $, $(1 \le a < N)$ then we claim that we can finish the construction. 
The crucial remark is that we also have 
\[
T_{\tilde N}(\ep)  \ge \tilde C |\ep|^a 
\]
for any $\tilde N > N$ since we are only adding higher order terms. As we will see $\omega^1=0$ so we will have to go to order $3$. Let us first consider the case $m=2$. We have that the equation at order $2$ and assuming that $\ell=1$ writes
$$
\mathcal M_0 \mathcal U_2 + 2 (\omega^{1} \cdot \partial_\th ) (\omega^0 \cdot \partial_{\th } )\mathcal U_1=(\mathcal U_1^2)_{xx}. 
$$
We have 
$$
\mathcal U^2_1= A^2 \cos^2(2\pi \th) \cos^2(2\pi x). 
$$
It yields 
$$
\mathcal U^2_1=\frac{A^2}{4}(1+\cos(4\pi \th))(1+\cos(4\pi x))
$$
and
 $$
 (\mathcal U_1^2)_{xx}=-4\pi^2 A^2 (1+\cos(4\pi \th))\cos(4\pi x), 
 $$
 since this is not in the range, hence one has $\omega^1=0$. We then go to order $m=3$ which gives  the equation (taking into account that $\omega^1=0$)
 $$
 \mathcal M_0 \mathcal U_3+2(\omega^0 \cdot \partial_\th)(\omega^2 \cdot \partial_\th)\mathcal U_1=2(\mathcal U_1\mathcal U_2)_{xx}. 
 $$
 From the previous step, one has 
 $$
 \mathcal U_2=-4\pi^2 A^2\Big ( \frac{\cos(4\pi x)\cos(4 \pi \th)}{F(2,2)}+\frac{\cos(4\pi x)}{F(0,2)} \Big )
 $$
 Hence we have 
 $$
 (\mathcal U_1 \mathcal U_2)_{xx}=-4 \pi^2 A^4 \Big ( -\cos(2\pi x)-9\cos(6\pi x)\Big )\Big ( \frac{-(\cos(2\pi \th)+\cos(6\pi \th))}{4F(2,2)}+\frac{\cos(2 \pi \th)}{2F(0,2)}\Big ).
 $$
 Identifying according to the discussion before, one gets that $\omega^2$ is given by 
 $$
 \omega^2=CA^4\Big ( \frac14 \frac{1}{F(2,2)}-\frac12 \frac{1}{F(0,2)}\Big ). 
 $$
 for some constant $C$. We check now that $\Big ( \frac14 \frac{1}{F(2,2)}-\frac12 \frac{1}{F(0,2)}\Big ) \neq 0$. We compute 
 $$
 F(0,2)-2F(2,2)=-12-8\mu \neq 0,
 $$
 hence $\omega^2 \neq 0$. 
 
 As a consequence one has 
 $$
 \omega^{[\leq N]}_\ep=\omega^0+\ep^2\omega^2+h.o.t. 
 $$
 and furthermore $\omega^2 \neq 0$. Since $\omega^0$ does not depend on $A$, we have that the twist condition writes
 $$
 \ep^2 \Big ( \frac{ d\omega^2}{dA} \Big )+h.o.t.
 $$
Hence, taking $\tilde N$ sufficiently large, we can apply Theorem~\ref{main} to obtain Theorem \ref{thBou}.

\section{Application to the Boussinesq system}
\label{applications2}

In this section, we consider the Boussinesq system of water
waves. This system is even more interesting than the Boussinesq
equation (see Section \ref{applications}) for at least two reasons:
first the system is more "singular"; second, the full power of the
{\sl two spaces} approach has to be used, i.e. one has to take the
spaces $X$ and $Y$ such that $X\neq Y$. The system writes
\begin{equation}\label{systemBouTemp}
\partial_t    
\begin{pmatrix} 
u  \\
v \\
\end{pmatrix}
=
\begin{pmatrix} 
0 & -\partial_x -\mu \partial_{xxx}  \\
-\partial_x & 0 \\
\end{pmatrix}
\begin{pmatrix} 
u \\
v \\
\end{pmatrix}
+
\begin{pmatrix} 
\partial_x (uv) \\
0 \\
\end{pmatrix}
\end{equation}
where $t>0$ and $x \in \TT.$ 

The elementary
 linear analysis around the $(0,0)$ equilibrium can be found in
\cite{llave08}.
Recall that the eigenvalues  of the linearization around $0$ are given by
\begin{equation}  
\omega(k)  = \pm |k| 2 \pi i
\sqrt{ 1 - 4 \pi^2 \mu k^2} \quad 
k \in \integer 
\end{equation} 

 The eigenvectors are given by 
$$
U_j=(2\pi j \cos(2\pi \th_j)\cos(2\pi j),\sqrt{(2\pi j)^2-\mu (2\pi j x )^4}\sin(2\pi \th_j)\sin(2\pi jx))
$$
for $j=1,...\ell$ where $\ell$ is the smallest integer such that $1-4\pi^2\mu k \geq 0$.  
 
We denote by $\omega^0$ the vector whose components are
all the real frequencies that appear
\begin{equation}\label{omega0} 
\begin{split}
\omega^0 = (\omega(k_1), \omega(k_2), \ldots, \omega(k_\ell));
&\\
\{k_1,\ldots k_\ell\} = \{k \in \ZZ \, | \, k > 0 ; 
  1 - 4 \pi^2 \mu k^2 \ge 0 \}
\end{split} 
\end{equation}

The following symmetries are preserved formally by the equation 
\begin{equation}\label{symWW}
\left \{ 
\begin{array}{c}
u(t,-x)=u(-t,x)=u(t,x),\\
v(t,-x)=v(-t,x)=-v(t,x). 
\end{array} \right . 
\end{equation}
We remind that we take 
$$
X=H^{\rho,m}(\TT) \times H^{\rho,m+1}(\TT)
$$
and 
$$
Y=H^{\rho,m-1}(\TT) \times H^{\rho,m}(\TT)
$$

We denote by $X_0$ the set of functions in $X$ satisfying the symmetries \eqref{symWW} and also the momentum
$$
\int_0^1 u(t,x)\,dx=0
$$
and 
$$
\int_0^1 v(t,x)\,dx=0. 
$$
The previous quantities, as in the case of the Boussinesq equation, are preserved by the equation under consideration. It is proved in \cite{llave08} the following proposition
\begin{pro}
The nonlinearity $\mathcal N(u,v)=(\partial_x (uv),0)$ is analytic (indeed a polynomial)  from $X$ to $Y$. 
\end{pro}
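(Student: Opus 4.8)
The plan is to exploit the fact that $\mathcal N$ is a (vector-valued) homogeneous quadratic polynomial in $(u,v)$, so that the only genuine content is a single continuity estimate; once $\mathcal N$ is shown to map $X$ boundedly into $Y$, analyticity is automatic, since a bounded multilinear map is analytic in the sense recalled after assumption {\bf H2}. Concretely, I would write $\mathcal N(u,v)=\mathcal B((u,v),(u,v))$ for the symmetric bounded bilinear map $\mathcal B((u_1,v_1),(u_2,v_2))=\bigl(\tfrac12\partial_x(u_1v_2+u_2v_1),\,0\bigr)$, and check that $\mathcal B: X\times X\to Y$ is bounded.

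The boundedness of $\mathcal B$ (equivalently of $\mathcal N$) rests on three elementary facts about the scale $H^{\rho,m}(\TT)$. First, the embedding $H^{\rho,m+1}(\TT)\hookrightarrow H^{\rho,m}(\TT)$ has norm $\le 1$, immediately from the definition of $\|\cdot\|_{\rho,m}$ since $|k|^{2m}+1\le |k|^{2(m+1)}+1$ for all $k\in\ZZ$. Second, the differentiation bound $\|\partial_x f\|_{\rho,m-1}\le C\|f\|_{\rho,m}$ holds, by comparing Fourier series: $(2\pi|k|)^2(|k|^{2(m-1)}+1)\le C(|k|^{2m}+1)$. Third, the Banach algebra property quoted in the text: for $m>1/2$ one has $\|fg\|_{\rho,m}\le C\|f\|_{\rho,m}\|g\|_{\rho,m}$. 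Given $(u,v)\in X=H^{\rho,m}(\TT)\times H^{\rho,m+1}(\TT)$ with $m>5/2>1/2$, the first fact gives $v\in H^{\rho,m}(\TT)$, the third gives $uv\in H^{\rho,m}(\TT)$ with $\|uv\|_{\rho,m}\le C\|u\|_{\rho,m}\|v\|_{\rho,m+1}$, and the second gives $\partial_x(uv)\in H^{\rho,m-1}(\TT)$ with the same bound. Since the second component of $\mathcal N$ is the zero function, which lies trivially in $H^{\rho,m}(\TT)$, we conclude $\mathcal N(u,v)\in H^{\rho,m-1}(\TT)\times H^{\rho,m}(\TT)=Y$ and $\|\mathcal N(u,v)\|_Y\le C\|u\|_{\rho,m}\|v\|_{\rho,m+1}\le C\|(u,v)\|_X^2$. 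Polarizing, $\mathcal B$ is bounded on $X\times X$, hence $\mathcal N$ is a bounded quadratic polynomial map from $X$ to $Y$, therefore analytic (with a globally convergent, finite expansion in multilinear operators).

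There is essentially no obstacle here; the only point requiring care is the bookkeeping of Sobolev indices, namely that the single $x$-derivative in $\mathcal N$ costs exactly one unit of regularity, which is absorbed by the one-unit gap between the components of $X$ and the components of $Y$, and that the product is formed at a regularity level $m>1/2$ where the algebra estimate is available. Complex analyticity in $z\in D_\rho$ of the product $uv$ is inherited from that of $u$ and $v$, so nothing new has to be checked on that side either.
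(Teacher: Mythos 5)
Your proof is correct and follows essentially the same route the paper takes: the paper simply invokes the Banach algebra property of the scale $H^{\rho,m}(\TT)$ for $m>1/2$ together with the polynomial form of the nonlinearity (deferring details to the cited reference), which is exactly the argument you spell out — the single $x$-derivative being absorbed by the one-unit gap between the components of $X$ and $Y$, and boundedness of the quadratic map yielding analyticity. Your write-up just makes explicit the bookkeeping that the paper leaves to \cite{llave08}.
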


Furthermore one has (see also \cite{llave08}) 
 \begin{lemma}
 For  $t>0$, one has 
$$
\|U_\th^s(t)\|_{Y,X}\leq \frac{C}{t^{1/2}}e^{-D t}
$$
and for $t<0$ one has
$$
\|U_\th^u(t)\|_{Y,X}\leq \frac{C'}{|t|^{1/2}}e^{D' t}
$$
for some $C,C',D,D'>0$. 
\end{lemma}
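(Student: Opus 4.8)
The plan is to diagonalise the linear part in Fourier series and reduce the two estimates to an elementary scalar bound for a Fourier multiplier; the computation is carried out in detail in \cite[pp.~404--405]{llave08}, and I sketch it here. Expand $z=(u,v)$ in Fourier series $u=\sum_{k\in\ZZ}u_ke^{2\pi ikx}$, $v=\sum_kv_ke^{2\pi ikx}$. On the $k$-th mode $\L_\mu$ acts by the $2\times2$ matrix $M_k$ with zero diagonal and off-diagonal entries $c(k)=-2\pi ik$ and $b(k)=i\,2\pi k(4\pi^2\mu k^2-1)$, so that $c(k)=O(|k|)$, $b(k)=O(|k|^3)$ and $b(k)c(k)=(2\pi k)^2(4\pi^2\mu k^2-1)$. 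For $|k|$ large, i.e.\ $4\pi^2\mu k^2>1$ — the ``hyperbolic'' modes — $M_k$ has two real eigenvalues $\pm\lambda(k)$, $\lambda(k)=2\pi|k|\sqrt{4\pi^2\mu k^2-1}$, which satisfy $\lambda(k)\ge c_1|k|^2$ for $|k|\ge k_0$ and $\lambda(k)\ge c_2>0$ for the finitely many remaining hyperbolic modes (here $c_1,c_2,k_0>0$ depend only on $\mu$). The stable space $X^s$ (resp.\ the unstable $X^u$) is the closed span, over the hyperbolic modes, of the eigenvector $v^-(k)=(1,r^-(k))$ of $M_k$ for the eigenvalue $-\lambda(k)$ (resp.\ $v^+(k)=(1,r^+(k))$ for $+\lambda(k)$), where $r^\mp(k)=\mp\lambda(k)/b(k)$ and $|r^\mp(k)|=(4\pi^2\mu k^2-1)^{-1/2}\asymp|k|^{-1}$; the center modes are excluded. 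Since the linearisation around $0$ is autonomous, $U^s_\th(t)=e^{t\L_\mu}\big|_{X^s}$ does not depend on $\th$, and on the $k$-th mode it is multiplication by $e^{-\lambda(k)t}$ along $v^-(k)$.

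I then estimate $\|U^s_\th(t)\|_{Y,X}$ directly from the weighted norms. Writing $w=(w^{(1)},w^{(2)})\in Y$ in the eigenbasis on each mode, the coefficient $\alpha^-(k)$ of $v^-(k)$ satisfies $|\alpha^-(k)|\le C(|w^{(1)}_k|+|k|\,|w^{(2)}_k|)$, because the $2\times2$ change-of-basis matrix $[v^+(k)\,|\,v^-(k)]$ has determinant $\asymp|k|^{-1}$ while the numerators in Cramer's rule are $O(|k|^{-1}|w^{(1)}_k|+|w^{(2)}_k|)$. The $k$-th mode of $U^s_\th(t)w$ is $e^{-\lambda(k)t}\alpha^-(k)(1,r^-(k))$, and since $(|k|^{2(m+1)}+1)|r^-(k)|^2\le C(|k|^{2m}+1)$ the two components contribute on the same footing, giving
\begin{equation*}
\|U^s_\th(t)w\|_{X}^2\le C\sum_{k\ \mathrm{hyp.}}e^{4\pi\rho|k|}\big(|k|^{2m}+1\big)\,e^{-2\lambda(k)t}\,\big(|w^{(1)}_k|^2+|k|^2|w^{(2)}_k|^2\big).
\end{equation*}
Comparing the $k$-th summand with the corresponding terms $(|k|^{2(m-1)}+1)|w^{(1)}_k|^2$ and $(|k|^{2m}+1)|w^{(2)}_k|^2$ of $\|w\|_{\rho,Y}^2$, and using $(|k|^{2m}+1)/(|k|^{2m-2}+1)\le C|k|^2$ for $|k|\ge1$, everything reduces to the single scalar bound: it is enough to show $|k|^2e^{-2\lambda(k)t}\le C\,t^{-1}e^{-2Dt}$ uniformly over hyperbolic $k$ and $t>0$. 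The analyticity weight $e^{4\pi\rho|k|}$ plays no role, since on hyperbolic modes the multiplier $e^{-\lambda(k)t}$ is a positive real scalar that does not alter the strip of analyticity; in particular the norm $\|U^s_\th(\cdot)\|_{\rho,Y,X}$ of Definition~\ref{ND1} coincides here with the operator norm of the single operator $e^{t\L_\mu}\big|_{X^s}$.

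The scalar bound is elementary. For $|k|\ge k_0$ use $\lambda(k)\ge c_1|k|^2$ and split $e^{-2\lambda(k)t}\le e^{-c_1k_0^2t}e^{-c_1|k|^2t}$; since $\sup_{y>0}y\,e^{-c_1yt}=C\,t^{-1}$ (with $y=|k|^2$) this gives $|k|^2e^{-2\lambda(k)t}\le C\,t^{-1}e^{-c_1k_0^2t}$. For the finitely many hyperbolic modes with $|k|<k_0$ use $\lambda(k)\ge c_2$, so $|k|^2e^{-2\lambda(k)t}\le k_0^2e^{-2c_2t}=k_0^2\big(t\,e^{-c_2t}\big)t^{-1}e^{-c_2t}\le C\,t^{-1}e^{-c_2t}$. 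Taking $D=\tfrac12\min(c_1k_0^2,c_2)$ and a square root yields $\|U^s_\th(t)\|_{Y,X}\le C\,t^{-1/2}e^{-Dt}$ for $t>0$, the first bound. The second bound follows identically by reversing time: on $X^u$ the semigroup acts on the $k$-th mode by $e^{\lambda(k)t}=e^{-\lambda(k)|t|}$ for $t<0$, and the same computation produces $\|U^u_\th(t)\|_{Y,X}\le C'|t|^{-1/2}e^{D't}$ for $t<0$.

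I expect the only genuinely delicate step to be the bookkeeping in the second paragraph: the diagonalising matrices $[v^+(k)\,|\,v^-(k)]$ degenerate as $|k|\to\infty$ (their determinant is $\asymp|k|^{-1}$), so the $1/\!\det$ growth in the inverse change of basis must be tracked carefully and shown to be absorbed exactly by the one extra derivative available in $X$ relative to $Y$ together with the flattening $r^\mp(k)\asymp|k|^{-1}$ of the eigenvectors. Once that is done, the remaining content is the one-line maximisation $\sup_{y>0}y\,e^{-cyt}\lesssim t^{-1}$, which is precisely what produces the $t^{-1/2}$ smoothing gain, exactly as in the scalar Boussinesq equation of Section~\ref{applications}.
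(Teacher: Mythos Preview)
Your proof is correct and follows exactly the approach the paper indicates: the paper does not give a self-contained argument for this lemma but simply refers to \cite{llave08} (and, for the analogous scalar Boussinesq lemma, to \cite[pp.~404--405]{llave08}), describing the method as expressing the evolution on the (un)stable spaces in Fourier series and estimating the supremum of the multipliers times the ratio of the weights --- which is precisely the computation you carry out. Your careful tracking of the $\asymp|k|^{-1}$ degeneracy of the change-of-basis determinant against the one extra derivative gained from $Y$ to $X$ is the key bookkeeping step that the paper leaves to the cited reference.
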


\subsubsection{Approximate solution}
We will not repeat the whole discussion which is very close to the one on the Boussinesq equation. Instead, we provide the necessary changes. The strategy is completely parallel to the one for the Boussinesq equation. Define two hull functions
$$
u_\ep(t,x)=\mathcal U_\ep(\omega_\ep t,x)
$$
and 
$$
v_\ep(t,x)=\mathcal V_\ep(\omega_\ep t,x)
$$
Once again we consider Lindstedt series in powers of $\ep$.

 Similarly to the previous section, we have 
\begin{lemma}\label{lindApprox2}
Let $\ell$ be as before. 
For all $N >  1$, there exists $(\omega^1,...,\omega^N) \in (\RR^\ell)^N$, $(\mathcal U_1,...,\mathcal U_N) \in (H^{\rho,m}(\TT))^N$ and $(\mathcal V_1,...,\mathcal V_N) \in (H^{\rho,m-1}(\TT))^N$for some $\rho >0$ such that 
\begin{equation}\label{systemBouApprox}
\Big \| \partial_t    
\begin{pmatrix} 
u_\ep  \\
v_\ep \\
\end{pmatrix}
-
\begin{pmatrix} 
0 & -\partial_x -\mu \partial_{xxx}  \\
-\partial_x & 0 \\
\end{pmatrix}
\begin{pmatrix} 
u_\ep \\
v_\ep \\
\end{pmatrix}
+
\begin{pmatrix} 
\partial_x (u_\ep v_\ep) \\
0 \\
\end{pmatrix}
\Big \|_{H^{\rho,m}(\TT)\times H^{\rho,m-1}(\TT)} \leq C \ep^{N+1}
\end{equation}
for some constant $C>0$ and 
$$u^{[\leq N]}_\ep(t,x)=\sum_{k=1}^N\ep^k \mathcal U_k( \omega^{[\leq N]}_\ep t,x),$$
$$v^{[\leq N]}_\ep(t,x)=\sum_{k=1}^N\ep^k \mathcal V_k( \omega^{[\leq N]}_\ep t,x),$$
where 
$$
\omega^{[\leq N]}_\ep=\omega^0+\sum_{k=1}^N \ep^k \omega^k. 
$$

The solutions depend on $\ell$ arbitrary parameters, where $\ell$ is the
number of the degrees of freedom of the kernel. 
\end{lemma}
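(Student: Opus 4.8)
The plan is to repeat, \emph{mutatis mutandis}, the argument of Lemma~\ref{lindApprox}: only the algebraic structure of the recursion is used, so the proof for the system is essentially the same once the correct first--order linear operator and the relevant symmetry classes are identified. First I pass to the hull functions, writing $u_\ep(t,x)=\mathcal U_\ep(\omega_\ep t,x)$ and $v_\ep(t,x)=\mathcal V_\ep(\omega_\ep t,x)$, so that \eqref{systemBouTemp} becomes
\begin{equation*}
(\omega_\ep\cdot\partial_\th)\begin{pmatrix}\mathcal U_\ep\\ \mathcal V_\ep\end{pmatrix}
=\begin{pmatrix}0 & -\partial_x-\mu\partial_{xxx}\\ -\partial_x & 0\end{pmatrix}\begin{pmatrix}\mathcal U_\ep\\ \mathcal V_\ep\end{pmatrix}
+\begin{pmatrix}\partial_x(\mathcal U_\ep\mathcal V_\ep)\\ 0\end{pmatrix},
\end{equation*}
now viewed as an equation for the pair $(\mathcal U_\ep,\mathcal V_\ep)$ and the frequency $\omega_\ep$ jointly. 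I then substitute the formal series $\mathcal U_\ep\sim\sum_{k\ge1}\ep^k\mathcal U_k$, $\mathcal V_\ep\sim\sum_{k\ge1}\ep^k\mathcal V_k$, $\omega_\ep\sim\omega^0+\sum_{k\ge1}\ep^k\omega^k$ and match powers of $\ep$. Every order is governed by the constant--coefficient operator
\begin{equation*}
\mathcal M_0=(\omega^0\cdot\partial_\th)\,\Id-\begin{pmatrix}0 & -\partial_x-\mu\partial_{xxx}\\ -\partial_x & 0\end{pmatrix},
\end{equation*}
which is diagonal on the trigonometric basis; under the non--resonance condition analogous to \eqref{nonresonance} (automatically satisfied when $\ell=1$, and otherwise excluding at most countably many $\mu$) its kernel is exactly the span of the first--order solutions.

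At order one the equation reads $\mathcal M_0(\mathcal U_1,\mathcal V_1)^{\top}=0$; imposing the symmetries \eqref{symWW} selects the eigenvectors $U_j$, forces the frequencies to equal $\omega^0$, and leaves $(\mathcal U_1,\mathcal V_1)=\sum_{j=1}^{\ell}A_j^1 U_j$ with the $A_j^1$ arbitrary --- this is the source of the $\ell$ free parameters. At order $m\ge2$ the equation has the form
\begin{equation*}
\mathcal M_0\begin{pmatrix}\mathcal U_m\\ \mathcal V_m\end{pmatrix}
+(\omega^{m-1}\cdot\partial_\th)\begin{pmatrix}\mathcal U_1\\ \mathcal V_1\end{pmatrix}=\mathcal R_m,
\end{equation*}
where $\mathcal R_m$ is a polynomial expression in $\mathcal U_1,\dots,\mathcal U_{m-1}$, $\mathcal V_1,\dots,\mathcal V_{m-1}$, $\omega^0,\dots,\omega^{m-2}$ and finitely many of their $x$--derivatives, coming from the bilinear term $\partial_x(\mathcal U_\ep\mathcal V_\ep)$ and the lower order frequency corrections; by induction it is a trigonometric polynomial lying in the same symmetry class. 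Splitting $\mathcal R_m$ into its component in the range of $\mathcal M_0$ and its component in the kernel: on the range one inverts $\mathcal M_0$ mode by mode (dividing by the non--zero multipliers) to obtain $(\mathcal U_m,\mathcal V_m)$; on the kernel --- which by non--resonance is precisely the span of the modes appearing in $(\mathcal U_1,\mathcal V_1)$, matched by $(\omega^{m-1}\cdot\partial_\th)(\mathcal U_1,\mathcal V_1)^{\top}$ --- one solves uniquely for $\omega^{m-1}$. The normalization (zero projection onto $\ker\mathcal M_0$, together with the momentum and parity constraints defining $X_0$) removes the residual freedom and yields uniqueness. Since each $\mathcal U_k,\mathcal V_k$ is a trigonometric polynomial it belongs to $H^{\rho,m}\times H^{\rho,m-1}$ for every $\rho>0$ and every $m$, and by construction the truncations $u_\ep^{[\le N]},v_\ep^{[\le N]}$ solve \eqref{systemBouTemp} up to an error that is $O(\ep^{N+1})$ in those norms, which is \eqref{systemBouApprox}.

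The only place where the argument genuinely differs from the scalar Boussinesq case, and the step I expect to require the most care, is the bookkeeping of the reality and parity structure \eqref{symWW} through the nonlinear coupling: one must check that with $\mathcal U_k$ even in both $\th$ and $x$ and $\mathcal V_k$ odd in both, the product $\mathcal U_\ep\mathcal V_\ep$ and hence $\partial_x(\mathcal U_\ep\mathcal V_\ep)$ carry the parity of the $u$--component, so that $\mathcal R_m$ stays in the class on which $\mathcal M_0$ has the expected kernel and range and the correction $\omega^{m-1}$ can always be chosen. A short parity count shows that this is the case; once it is in place, the determination of $\omega^{m-1}$ and the inversion of $\mathcal M_0$ on its range are the same computations as in the proof of Lemma~\ref{lindApprox}, now using the dispersion relation $\omega(k)=\pm|k|2\pi i\sqrt{1-4\pi^2\mu k^2}$ in place of \eqref{dispersion}.
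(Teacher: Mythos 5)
Your proposal is correct and follows essentially the same route as the paper: pass to hull functions, expand in Lindstedt series, and at each order determine $\omega^{m-1}$ from the component in the kernel of $\mathcal M_0$ and $(\mathcal U_m,\mathcal V_m)$ by inverting $\mathcal M_0$ on its range, with the parity classes \eqref{symWW} doing the bookkeeping exactly as you describe. The one imprecision is calling $\mathcal M_0$ ``diagonal on the trigonometric basis'': as the paper stresses, it is not a multiplication operator in any obvious basis --- it maps the even--even/odd--odd class spanned by $(\cos(2\pi k\cdot\th)\cos(2\pi jx),\sin(2\pi k\cdot\th)\sin(2\pi jx))$ into the complementary class spanned by $(\sin(2\pi k\cdot\th)\cos(2\pi jx),\cos(2\pi k\cdot\th)\sin(2\pi jx))$, acting as a $2\times 2$ block on each Fourier mode, so ``dividing by the non-zero multiplier'' really means inverting that block, whose determinant $-(2\pi)^2(\omega^0\cdot k)^2+(2\pi j)^2(1-4\pi^2\mu j^2)$ plays the role of $F(k,j)$; this does not affect the validity of the argument.
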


\begin{proof}

We develop a general theory, parallel with the one of the Boussinesq equation in the previous section. The main new difficulties is that we are dealing with systems of equations and that the linear operator is not diagonal in an obvious sense.  Denote
$$
\mathcal A= 
\begin{pmatrix} 0 & -\partial_x -\mu \partial^3_x   \\
-\partial_x  & 0 \\
\end{pmatrix}
$$
At general order $m \geq 2$, we search for solutions of the form
$$
\mathcal U_m(\th,x)=\sum_{j \ZZ, k \ZZ^\ell} U^m_{k,j} \cos(2\pi k \cdot \th) \cos(2\pi jx)
$$
and
$$
\mathcal V_m(\th,x)=\sum_{j \ZZ, k \ZZ^\ell} V^m_{k,j} \sin(2\pi k \cdot \th) \sin(2\pi jx).
$$
The previous formulae come from the assumptions of symmetry of the solutions. Denoting $\mathcal W_m=(\mathcal U_m,\mathcal V_m)$ one has
$$
\Big (\omega^0\cdot \partial_\th -\mathcal A \Big ) 
\mathcal W_m
+\omega^{m-1}\cdot \partial_\th 
\mathcal W_1=\mathcal R_m (\omega^0,..., \omega^{m-2}, \mathcal W_{m-1}). 
$$

It is important to notice the operator $\M_0=\Big (\omega^0\cdot \partial_\th -\mathcal A \Big )$ is not self-adjoint in $X$ and does not act as a multiplication in an easy basis of vectors.  We then need to understand the range of this operator. Its domain is spanned by
 $$\Big ((\cos(2\pi k \cdot \th)\cos(2\pi j x),\sin(2\pi k \cdot \th)\sin(2\pi j x)) \Big ).$$ 
 The range is then the space of vector functions of the form of linear combinations of the basis 
$$\Big ( (\sin(2\pi k \cdot \th)\cos(2\pi j x),\cos(2\pi k \cdot \th)\sin(2\pi j x))\Big ).$$

{\bf Order 1} One has 
\begin{equation}
 \omega^0 \cdot \partial_\th    
\begin{pmatrix} 
\mathcal U_1  \\
\mathcal V_1 \\
\end{pmatrix}
=
\begin{pmatrix} 
-\partial_x \mathcal V_1-\mu \partial^3_x \mathcal V_1 \\
-\partial_x \mathcal U_1 \\
\end{pmatrix}
\end{equation}
We expand
$$
\mathcal U_1=\sum_{j=1}^\ell A^1_j \cos(2\pi \th_j)\cos( 2\pi jx)
$$
$$
\mathcal V_1=\sum_{j=1}^\ell B^1_j \sin(2\pi \th_j)\sin(2\pi jx)
$$
As in the case of the Boussinesq equation, this gives directly the vector $\omega^0$ and one can take any $A^1_j,B^1_j$. For convenience later, we assume  
$$
A^1_j \neq 0, B^1_j \neq 0,\,\,\,j=1,\ldots,\ell 
$$
The rest of the orders is like in the previous section on the Boussinesq equation. 
 \end{proof}

We now prove Theorem \ref{thBouSystem}, i.e. considering the case $\ell=1$. It amounts to apply the abstract theorem \ref{existence}. As in Section \ref{applications}, this is done by checking the twist condition, the rest of the proof being completely parallel. We have first 
$$
\mathcal W_1=A
\begin{pmatrix}
\cos(2\pi \theta) \cos(2\pi x) 2\pi \\
\sin(2\pi \theta) \sin(2\pi x) 2\pi \omega^0. \\
\end{pmatrix}
$$
For simplicity of writing we suppress the harmless parameter $A$. 

At order $2$, one has 
\begin{equation}
 \omega^0 \cdot \partial_\th    
\begin{pmatrix} 
\mathcal U_2 \\
\mathcal V_2 \\
\end{pmatrix}
+\omega^1 \cdot \partial_\th    
\begin{pmatrix} 
\mathcal U_1 \\
\mathcal V_1 \\
\end{pmatrix}
=
\begin{pmatrix} 
-\partial_x \mathcal V_2-\mu \partial^3_x \mathcal V_2+\partial_x(\mathcal U_1 \mathcal V_1) \\
-\partial_x \mathcal U_2 \\
\end{pmatrix}
\end{equation}

Furthermore, one has (the map $F(j,k)$ is defined as in the previous section)
$$
\partial_x(\mathcal U_1\mathcal V_1)=\frac12 \sin(4\pi \th)\sin(4\pi x). 
$$

This is never in the range of $\mathcal M_0 =\omega^0\cdot \partial_\th -\mathcal A$. Therefore, we obtain $\omega^1=0$. Additionally, one has 
$$
\mathcal  W_2=\frac{1}{F(2,2)}\begin{pmatrix} 
\frac12 \cos(4\pi \th)\cos(4\pi x)\\
\frac12\sin(4\pi \th)\sin(4\pi x)\omega^0
\end{pmatrix} +
\frac{1}{F(-2,2)}\begin{pmatrix} 
\frac12 \cos(4\pi \th)\cos(4\pi x)\\
-\frac12\sin(4\pi \th)\sin(4\pi x)\omega^0
\end{pmatrix}
$$
We go now to order $3$. We have 
\begin{equation}
 \mathcal M_0   
\begin{pmatrix} 
\mathcal U_3 \\
\mathcal V_3 \\
\end{pmatrix}
+\omega^2 \cdot \partial_\th    
\begin{pmatrix} 
\mathcal U_1 \\
\mathcal V_1 \\
\end{pmatrix}
=
\begin{pmatrix}
\partial_x(\mathcal U_1\mathcal V_2)+\partial_x(\mathcal U_2 \mathcal  V_1)\\
0
\end{pmatrix}
\end{equation}

We have by lengthy but straightforward computations
$$
\mathcal U_1\mathcal V_2=\frac18 \frac{1}{F(-2,2)}\Big (\sin (6\pi \th)-\sin(2\pi \th) \Big )\Big (\sin (6\pi x)-\sin(2\pi x) \Big )-
$$
$$
\frac{\omega^0}{8F(2,2)}\Big (\sin (6\pi \th)-\sin(2\pi \th) \Big )\Big (\sin (6\pi x)-\sin(2\pi x) \Big )
$$

Similarly
$$
\mathcal U_2 \mathcal V_1=\frac18 \frac{1}{F(2,2)}\Big (\sin (6\pi \th)-\sin(2\pi \th) \Big )\Big (\sin (6\pi x)-\sin(2\pi x) \Big )+
$$
$$
\frac{\omega^0}{8F(-2,2)}\Big (\sin (6\pi \th)-\sin(2\pi \th) \Big )\Big (\sin (6\pi x)-\sin(2\pi x) \Big )
$$

Hence one has 

\begin{equation} 
\begin{split}
\partial_x(\mathcal U_1&\mathcal V_2)+\partial_x(\mathcal U_2\mathcal V_1)\\
&=
frac18 \Big ( \frac{1}{F(-2,2)}-\frac{\omega^2}{F(2,2)}+  \frac{1}{F(-2,2)}-\frac{1}{F(2,2)} \Big )\Big (2\pi \sin(2\pi \th)\cos(2\pi x)\Big )+ R(\th,x)
\end{split} 
\end{equation} 
where $R(\th,x)$ is a trigonometric polynomial involving higher order frequencies. Since the coefficient 
$$
\frac{\pi}{4} \Big ( \frac{1}{F(-2,2)}-\frac{\omega^0}{F(2,2)}+ \frac{1}{F(-2,2)}-\frac{1}{F(2,2)} \Big )
$$
is non-zero only on a finite number of values of $\mu$, one deduces that $\omega^2$ is nonzero, hence the twist condition. The rest of the proof follows.

\bibliographystyle{alpha}
\bibliography{biblioPDE}

\def\cprime{$'$} \def\cprime{$'$} \def\cprime{$'$}
\begin{thebibliography}{FGdlLL16}

\bibitem[AR67]{AbrahamR67}
Ralph Abraham and Joel Robbin.
\newblock {\em Transversal mappings and flows}.
\newblock W. A. Benjamin, Inc., New York-Amsterdam, 1967.

\bibitem[Bam99]{bambusi99}
Dario Bambusi.
\newblock On the {D}arboux theorem for weak symplectic manifolds.
\newblock {\em Proc. Amer. Math. Soc.}, 127(11):3383--3391, 1999.

\bibitem[Ber07]{Berti07}
Massimiliano Berti.
\newblock {\em Nonlinear oscillations of {H}amiltonian {PDE}s}.
\newblock Progress in Nonlinear Differential Equations and their Applications,
  74. Birkh\"auser Boston, Inc., Boston, MA, 2007.

\bibitem[Bou72]{Boussinesq}
J.~Boussinesq.
\newblock Théorie des ondes et des remous qui se propagent le long d'un canal
  rectangulaire horizontal, en communiquant au liquide contenu dans ce canal
  des vitesses sensiblement pareilles de la surface au fond.
\newblock {\em Jour. de Math. Pures et App. (2)}, 17:55–108, 1872.

\bibitem[Bou98]{bourgain}
J.~Bourgain.
\newblock Quasi-periodic solutions of {H}amiltonian perturbations of 2{D}
  linear {S}chr\"odinger equations.
\newblock {\em Ann. of Math. (2)}, 148(2):363--439, 1998.

\bibitem[Bou99]{Bourgain00}
Jean Bourgain.
\newblock Periodic solutions of nonlinear wave equations.
\newblock In {\em Harmonic analysis and partial differential equations
  (Chicago, IL, 1996)}, pages 69--97. Univ. Chicago Press, Chicago, IL, 1999.

\bibitem[CCdlL15]{CallejaCL15}
R.~Calleja, A.~Celletti, and R.~de~la Llave.
\newblock Whitney regularity, monogenic dependence of solutions in kam theory:
  an a-posteriori approach.
\newblock 2015.
\newblock Manuscript.

\bibitem[CdlL10a]{CallejaL10}
R.~Calleja and R.~de~la Llave.
\newblock A numerically accessible criterion for the breakdown of
  quasi-periodic solutions and its rigorous justification.
\newblock {\em Nonlinearity}, 23(9):2029--2058, 2010.

\bibitem[CdlL10b]{CallejaL}
Renato Calleja and Rafael de~la Llave.
\newblock A numerically accessible criterion for the breakdown of
  quasi-periodic solutions and its rigorous justification.
\newblock {\em Nonlinearity}, 23(9):2029--2058, 2010.

\bibitem[CFNT89]{ConstantinFNT89}
P.~Constantin, C.~Foias, B.~Nicolaenko, and R.~Temam.
\newblock {\em Integral manifolds and inertial manifolds for dissipative
  partial differential equations}, volume~70 of {\em Applied Mathematical
  Sciences}.
\newblock Springer-Verlag, New York, 1989.

\bibitem[CGL15]{CastelliGL15}
R.~Castelli, M.~Gameiro, and J.-P. Lessard.
\newblock Rigorous numerics for ill-posed pdes: periodic orbits in the
  boussinesq equation.
\newblock 2015.
\newblock arXiv:1509.08648.

\bibitem[CGNS05]{CraigGNS05}
Walter Craig, Philippe Guyenne, David~P. Nicholls, and Catherine Sulem.
\newblock Hamiltonian long-wave expansions for water waves over a rough bottom.
\newblock {\em Proc. R. Soc. Lond. Ser. A Math. Phys. Eng. Sci.},
  461(2055):839--873, 2005.

\bibitem[CH15]{CanadellH15}
Marta Canadell and \`Alex Haro.
\newblock A {KAM}-like theorem for quasi-periodic normally hyperbolic invariant
  tori.
\newblock 2015.
\newblock Preprint.

\bibitem[CL95]{ChowL95}
Shui-Nee Chow and Hugo Leiva.
\newblock Existence and roughness of the exponential dichotomy for skew-product
  semiflow in {B}anach spaces.
\newblock {\em J. Differential Equations}, 120(2):429--477, 1995.

\bibitem[CL96]{ChowL96}
S.-N. Chow and H.~Leiva.
\newblock Unbounded perturbation of the exponential dichotomy for evolution
  equations.
\newblock {\em J. Differential Equations}, 129(2):509--531, 1996.

\bibitem[CM74]{chernoffM74}
Paul~R. Chernoff and Jerrold~E. Marsden.
\newblock {\em Properties of infinite dimensional {H}amiltonian systems}.
\newblock Springer-Verlag, Berlin, 1974.
\newblock Lecture Notes in Mathematics, Vol. 425.

\bibitem[CM12]{CherrierM12}
Pascal Cherrier and Albert Milani.
\newblock {\em Linear and quasi-linear evolution equations in {H}ilbert
  spaces}, volume 135 of {\em Graduate Studies in Mathematics}.
\newblock American Mathematical Society, Providence, RI, 2012.

\bibitem[CNS11]{ChenNS11}
Min Chen, Nghiem~V. Nguyen, and Shu-Ming Sun.
\newblock Existence of traveling-wave solutions to {B}oussinesq systems.
\newblock {\em Differential Integral Equations}, 24(9-10):895--908, 2011.

\bibitem[Cra00]{Craig00}
Walter Craig.
\newblock {\em Probl\`emes de petits diviseurs dans les \'equations aux
  d\'eriv\'ees partielles}, volume~9 of {\em Panoramas et Synth\`eses
  [Panoramas and Syntheses]}.
\newblock Soci\'et\'e Math\'ematique de France, Paris, 2000.

\bibitem[Cra08]{Craig08}
Walter Craig.
\newblock Transformation theory of {H}amiltonian {PDE} and the problem of water
  waves.
\newblock In {\em Hamiltonian dynamical systems and applications}, NATO Sci.
  Peace Secur. Ser. B Phys. Biophys., pages 67--83. Springer, Dordrecht, 2008.

\bibitem[CV02]{ChepyzhovV02}
Vladimir~V. Chepyzhov and Mark~I. Vishik.
\newblock {\em Attractors for equations of mathematical physics}, volume~49 of
  {\em American Mathematical Society Colloquium Publications}.
\newblock American Mathematical Society, Providence, RI, 2002.

\bibitem[CW93]{CraigW1}
Walter Craig and C.~Eugene Wayne.
\newblock Newton's method and periodic solutions of nonlinear wave equations.
\newblock {\em Comm. Pure Appl. Math.}, 46(11):1409--1498, 1993.

\bibitem[CW94]{CraigW2}
Walter Craig and C.~Eugene Wayne.
\newblock Periodic solutions of nonlinear {S}chr\"odinger equations and the
  {N}ash-{M}oser method.
\newblock In {\em Hamiltonian mechanics ({T}oru\'n, 1993)}, volume 331 of {\em
  NATO Adv. Sci. Inst. Ser. B Phys.}, pages 103--122. Plenum, New York, 1994.

\bibitem[dlL01]{Llave01c}
Rafael de~la Llave.
\newblock A tutorial on {K}{A}{M} theory.
\newblock In {\em Smooth ergodic theory and its applications (Seattle, WA,
  1999)}, pages 175--292. Amer. Math. Soc., Providence, RI, 2001.

\bibitem[dlL09]{llave08}
Rafael de~la Llave.
\newblock A smooth center manifold theorem which applies to some ill-posed
  partial differential equations with unbounded nonlinearities.
\newblock {\em J. Dynam. Differential Equations}, 21(3):371--415, 2009.

\bibitem[dlLGJV05]{LGJV05}
R.~de~la Llave, A.~Gonz{\'a}lez, {\`A}.~Jorba, and J.~Villanueva.
\newblock K{AM} theory without action-angle variables.
\newblock {\em Nonlinearity}, 18(2):855--895, 2005.

\bibitem[Dou88]{Douady88b}
Rapha{\"e}l Douady.
\newblock Stabilit\'e ou instabilit\'e des points fixes elliptiques.
\newblock {\em Ann. Sci. \'Ecole Norm. Sup. (4)}, 21(1):1--46, 1988.

\bibitem[Eli89]{Eliasson88}
L.~H. Eliasson.
\newblock {H}amiltonian systems with linear normal form near an invariant
  torus.
\newblock In {\em Nonlinear Dynamics (Bologna, 1988)}, pages 11--29. World Sci.
  Publishing, Teaneck, NJ, 1989.

\bibitem[FdlLS09a]{fontichdelLS071}
Ernest Fontich, Rafael de~la Llave, and Yannick Sire.
\newblock Construction of invariant whiskered tori by a parameterization
  method. {I}. {M}aps and flows in finite dimensions.
\newblock {\em J. Differential Equations}, 246(8):3136--3213, 2009.

\bibitem[FdlLS09b]{fontichdelLS07a}
Ernest Fontich, Rafael de~la Llave, and Yannick Sire.
\newblock A method for the study of whiskered quasi-periodic and
  almost-periodic solutions in finite and infinite dimensional {H}amiltonian
  systems.
\newblock {\em Electron. Res. Announc. Math. Sci.}, 16:9--22, 2009.

\bibitem[FdlLS15]{FontichLS15}
Ernest Fontich, Rafael de~la Llave, and Yannick Sire.
\newblock Construction of invariant whiskered tori by a parameterization
  method. {P}art {II}: {Q}uasi-periodic and almost periodic breathers in
  coupled map lattices.
\newblock {\em J. Differential Equations}, 259(6):2180--2279, 2015.

\bibitem[FGdlLL16]{FiguerasGLL15}
J.-Ll Figueras, M.~Gameiro, R.~de~la Llave, and J.-P. Lessard.
\newblock A unified framework for the rigorous computation of invariant objects
  in {PDE}'s.
\newblock 2016.
\newblock Preprint.

\bibitem[GK14]{GrebertK14}
Beno{\^{\i}}t Gr{\'e}bert and Thomas Kappeler.
\newblock {\em The defocusing {NLS} equation and its normal form}.
\newblock EMS Series of Lectures in Mathematics. European Mathematical Society
  (EMS), Z\"urich, 2014.

\bibitem[Gol85]{Goldstein85}
Jerome~A. Goldstein.
\newblock {\em Semigroups of linear operators and applications}.
\newblock Oxford Mathematical Monographs. The Clarendon Press, Oxford
  University Press, New York, 1985.

\bibitem[Gra74]{Graff74}
Samuel~M. Graff.
\newblock On the conservation of hyperbolic invariant tori for {H}amiltonian
  systems.
\newblock {\em J. Differential Equations}, 15:1--69, 1974.

\bibitem[Hal88]{Hale88}
Jack~K. Hale.
\newblock {\em Asymptotic behavior of dissipative systems}, volume~25 of {\em
  Mathematical Surveys and Monographs}.
\newblock American Mathematical Society, Providence, RI, 1988.

\bibitem[HdlL06]{HaroL06}
A.~Haro and R.~de~la Llave.
\newblock A parameterization method for the computation of invariant tori and
  their whiskers in quasi-periodic maps: rigorous results.
\newblock {\em J. Differential Equations}, 228(2):530--579, 2006.

\bibitem[HdlLS12]{HuguetLS12}
Gemma Huguet, Rafael de~la Llave, and Yannick Sire.
\newblock Computation of whiskered invariant tori and their associated
  manifolds: new fast algorithms.
\newblock {\em Discrete Contin. Dyn. Syst.}, 32(4):1309--1353, 2012.

\bibitem[Hen81]{henry}
Daniel Henry.
\newblock {\em Geometric theory of semilinear parabolic equations}, volume 840
  of {\em Lecture Notes in Mathematics}.
\newblock Springer-Verlag, Berlin, 1981.

\bibitem[HI11]{HaragusI11}
Mariana Haragus and G{\'e}rard Iooss.
\newblock {\em Local bifurcations, center manifolds, and normal forms in
  infinite-dimensional dynamical systems}.
\newblock Universitext. Springer-Verlag London, Ltd., London; EDP Sciences, Les
  Ulis, 2011.

\bibitem[HMO02]{HaleMO02}
Jack~K. Hale, Luis~T. Magalh{\~a}es, and Waldyr~M. Oliva.
\newblock {\em Dynamics in infinite dimensions}, volume~47 of {\em Applied
  Mathematical Sciences}.
\newblock Springer-Verlag, New York, second edition, 2002.
\newblock With an appendix by Krzysztof P. Rybakowski.

\bibitem[HP74]{hilleP}
Einar Hille and Ralph~S. Phillips.
\newblock {\em Functional analysis and semi-groups}.
\newblock American Mathematical Society, Providence, R. I., 1974.
\newblock Third printing of the revised edition of 1957, American Mathematical
  Society Colloquium Publications, Vol. XXXI.

\bibitem[HPS77]{HirschPS77}
M.W. Hirsch, C.C. Pugh, and M.~Shub.
\newblock {\em Invariant manifolds}.
\newblock Springer-Verlag, Berlin, 1977.
\newblock Lecture Notes in Mathematics, Vol. 583.

\bibitem[KP03]{KappelerP03}
Thomas Kappeler and J{\"u}rgen P{\"o}schel.
\newblock {\em Kd{V} \& {KAM}}, volume~45 of {\em Ergebnisse der Mathematik und
  ihrer Grenzgebiete. 3. Folge. A Series of Modern Surveys in Mathematics
  [Results in Mathematics and Related Areas. 3rd Series. A Series of Modern
  Surveys in Mathematics]}.
\newblock Springer-Verlag, Berlin, 2003.

\bibitem[Kuk93]{Kuksin93}
Sergej~B. Kuksin.
\newblock {\em Nearly integrable infinite-dimensional {H}amiltonian systems},
  volume 1556 of {\em Lecture Notes in Mathematics}.
\newblock Springer-Verlag, Berlin, 1993.

\bibitem[Kuk94]{Kuksin94}
S.~B. Kuksin.
\newblock K{A}{M}-theory for partial differential equations.
\newblock In {\em First European Congress of Mathematics, Vol.\ II (Paris,
  1992)}, pages 123--157. Birkh\"auser, Basel, 1994.

\bibitem[Kuk00]{Kuksin00}
Sergei~B. Kuksin.
\newblock {\em Analysis of {H}amiltonian {PDE}s}, volume~19 of {\em Oxford
  Lecture Series in Mathematics and its Applications}.
\newblock Oxford University Press, Oxford, 2000.

\bibitem[Kuk06]{Kuksin06}
Sergei~B. Kuksin.
\newblock Hamiltonian {PDE}s.
\newblock In {\em Handbook of dynamical systems. Vol. 1B}, pages 1087--1133.
  Elsevier B. V., Amsterdam, 2006.
\newblock With an appendix by Dario Bambusi.

\bibitem[LdlL09]{LiL09}
Xuemei Li and Rafael de~la Llave.
\newblock Construction of quasi-periodic solutions of delay differential
  equations via {KAM} techniques.
\newblock {\em J. Differential Equations}, 247(3):822--865, 2009.

\bibitem[LM09]{LewickaM09}
Marta Lewicka and Piotr~B. Mucha.
\newblock On the existence of traveling waves in the 3{D} {B}oussinesq system.
\newblock {\em Comm. Math. Phys.}, 292(2):417--429, 2009.

\bibitem[LV11]{LuqueV11}
Alejandro Luque and Jordi Villanueva.
\newblock A {KAM} theorem without action-angle variables for elliptic lower
  dimensional tori.
\newblock {\em Nonlinearity}, 24(4):1033--1080, 2011.

\bibitem[LY05]{LiY05}
Yong Li and Yingfei Yi.
\newblock Persistence of lower dimensional tori of general types in
  {H}amiltonian systems.
\newblock {\em Trans. Amer. Math. Soc.}, 357(4):1565--1600 (electronic), 2005.

\bibitem[LY11]{LiuY11}
Jianjun Liu and Xiaoping Yuan.
\newblock A {KAM} theorem for {H}amiltonian partial differential equations with
  unbounded perturbations.
\newblock {\em Comm. Math. Phys.}, 307(3):629--673, 2011.

\bibitem[McK81]{McKean}
H.~P. McKean.
\newblock Boussinesq's equation on the circle.
\newblock {\em Comm. Pure Appl. Math.}, 34(5):599--691, 1981.

\bibitem[Miy92]{Miyadera92}
Isao Miyadera.
\newblock {\em Nonlinear semigroups}, volume 109 of {\em Translations of
  Mathematical Monographs}.
\newblock American Mathematical Society, Providence, RI, 1992.
\newblock Translated from the 1977 Japanese original by Choong Yun Cho.

\bibitem[Mos66a]{Moser66b}
J.~Moser.
\newblock A rapidly convergent iteration method and non-linear differential
  equations. {I}{I}.
\newblock {\em Ann. Scuola Norm. Sup. Pisa (3)}, 20:499--535, 1966.

\bibitem[Mos66b]{Moser66a}
J.~Moser.
\newblock A rapidly convergent iteration method and non-linear partial
  differential equations. {I}.
\newblock {\em Ann. Scuola Norm. Sup. Pisa (3)}, 20:265--315, 1966.

\bibitem[Mos68]{Moser68}
J{\"u}rgen~K. Moser.
\newblock {\em Lectures on {H}amiltonian systems}.
\newblock Mem. Amer. Math. Soc. No. 81. Amer. Math. Soc., Providence, R.I.,
  1968.

\bibitem[Nel69]{Nelson69}
E.~Nelson.
\newblock {\em Topics in Dynamics. {I}: {F}lows}.
\newblock Princeton University Press, Princeton, N.J., 1969.

\bibitem[Neu10]{Neuberger}
J.~W. Neuberger.
\newblock {\em Sobolev gradients and differential equations}, volume 1670 of
  {\em Lecture Notes in Mathematics}.
\newblock Springer-Verlag, Berlin, second edition, 2010.

\bibitem[Paz83]{pazy}
A.~Pazy.
\newblock {\em Semigroups of linear operators and applications to partial
  differential equations}, volume~44 of {\em Applied Mathematical Sciences}.
\newblock Springer-Verlag, New York, 1983.

\bibitem[Poi99]{Poincare99}
H.~Poincar{\'e}.
\newblock {\em Les m\'ethodes nouvelles de la m\'ecanique c\'eleste}, volume 1,
  2, 3.
\newblock Gauthier-Villars, Paris, 1892--1899.

\bibitem[PS99]{PlissS99}
Victor~A. Pliss and George~R. Sell.
\newblock Robustness of exponential dichotomies in infinite-dimensional
  dynamical systems.
\newblock {\em J. Dynam. Differential Equations}, 11(3):471--513, 1999.

\bibitem[Rob01]{Robinson01}
James~C. Robinson.
\newblock {\em Infinite-dimensional dynamical systems}.
\newblock Cambridge Texts in Applied Mathematics. Cambridge University Press,
  Cambridge, 2001.
\newblock An introduction to dissipative parabolic PDEs and the theory of
  global attractors.

\bibitem[R{\"u}s75]{Russmann75}
H.~R{\"u}ssmann.
\newblock On optimal estimates for the solutions of linear partial differential
  equations of first order with constant coefficients on the torus.
\newblock In {\em Dynamical Systems, Theory and Applications (Battelle
  Rencontres, Seattle, Wash., 1974)}, pages 598--624. Lecture Notes in Phys.,
  Vol. 38, Berlin, 1975. Springer.

\bibitem[R{\"u}s76a]{Russmann76}
H.~R{\"u}ssmann.
\newblock On a new proof of {M}oser's twist mapping theorem.
\newblock In {\em Proceedings of the Fifth Conference on Mathematical Methods
  in Celestial Mechanics (Oberwolfach, 1975), Part I}, volume~14, pages 19--31,
  1976.

\bibitem[R{\"u}s76b]{Russmann76a}
H.~R{\"u}ssmann.
\newblock On optimal estimates for the solutions of linear difference equations
  on the circle.
\newblock {\em Celestial Mech.}, 14(1):33--37, 1976.

\bibitem[Sch60]{Schwartz}
J.~Schwartz.
\newblock On {N}ash's implicit functional theorem.
\newblock {\em Comm. Pure Appl. Math.}, 13:509--530, 1960.

\bibitem[Sev06]{Sevryuk06}
Mikhail~B. Sevryuk.
\newblock Partial preservation of frequencies in {KAM} theory.
\newblock {\em Nonlinearity}, 19(5):1099--1140, 2006.

\bibitem[Sho97]{Showalter}
R.~E. Showalter.
\newblock {\em Monotone operators in {B}anach space and nonlinear partial
  differential equations}, volume~49 of {\em Mathematical Surveys and
  Monographs}.
\newblock American Mathematical Society, Providence, RI, 1997.

\bibitem[SS76]{SackerS76b}
R.J. Sacker and G.R. Sell.
\newblock Existence of dichotomies and invariant splittings for linear
  differential systems. {I}{I}{I}.
\newblock {\em J. Differential Equations}, 22(2):497--522, 1976.

\bibitem[SY02]{SellY02}
George~R. Sell and Yuncheng You.
\newblock {\em Dynamics of evolutionary equations}, volume 143 of {\em Applied
  Mathematical Sciences}.
\newblock Springer-Verlag, New York, 2002.

\bibitem[Tem97]{Temam97}
Roger Temam.
\newblock {\em Infinite-dimensional dynamical systems in mechanics and
  physics}, volume~68 of {\em Applied Mathematical Sciences}.
\newblock Springer-Verlag, New York, second edition, 1997.

\bibitem[Van02]{Vano02}
J.~Vano.
\newblock A {N}ash-{M}oser implicit function theorem with {W}hitney regularity
  and applications.
\newblock {\em Ph.D. Thesis, University of Texas}, 2002.

\bibitem[You99]{You99}
Jiangong You.
\newblock Perturbations of lower-dimensional tori for {H}amiltonian systems.
\newblock {\em J. Differential Equations}, 152(1):1--29, 1999.

\bibitem[Zeh73]{Zehnder73}
E.~Zehnder.
\newblock Homoclinic points near elliptic fixed points.
\newblock {\em Comm. Pure Appl. Math.}, 26:131--182, 1973.

\bibitem[Zeh75]{Zehnder75}
E.~Zehnder.
\newblock Generalized implicit function theorems with applications to some
  small divisor problems. {I}.
\newblock {\em Comm. Pure Appl. Math.}, 28:91--140, 1975.

\bibitem[Zeh76]{Zehnder75b}
E.~Zehnder.
\newblock Generalized implicit function theorems with applications to some
  small divisor problems. {I}{I}.
\newblock {\em Comm. Pure Appl. Math.}, 29(1):49--111, 1976.

\end{thebibliography}

\end{document}